\date{\today}
\title{Generic Controllability of 3D Swimmers in a Perfect Fluid}
\author{Thomas
Chambrion\thanks{E-mail: \texttt{thomas.chambrion@iecn.u-nancy.fr}  and  \texttt{alexandre.munnier@iecn.u-nancy.fr}} \and Alexandre
Munnier$^{\ast}$\thanks{Both authors are with Institut \'Elie Cartan UMR 7502, Nancy-Universit\'e,
CNRS, INRIA, B.P.~239, F-54506 Vandoeuvre-l\`es-Nancy Cedex,
France, and INRIA
Nancy Grand Est, Projet CORIDA. Authors both supported by CPER MISN AOC. First author supproted by ANR GCM, ERC Boscain and INRIA color CUPIDSE, and second author by ANR CISIFS, GAOS and MOSICOB.}}
\begin{document}
\maketitle
\begin{abstract}
We address the problem of controlling a dynamical system governing the motion of a 3D  weighted shape changing body swimming in a perfect fluid. The rigid displacement of the swimmer results from the exchange of momentum between prescribed shape changes and the flow, the total impulse of the fluid-swimmer system being constant for all times.
We prove the following tracking results:
(i) Synchronized swimming: Maybe up to an arbitrarily small change of its density, any swimmer can approximately follow any given trajectory while, in addition, undergoing approximately any given shape changes. In this statement, the control consists in arbitrarily small superimposed deformations; 
(ii) Freestyle swimming:
Maybe up to an arbitrarily small change of its density, any swimmer can approximately tracks any given trajectory by combining suitably at most five basic movements that can be generically chosen (no macro shape changes are prescribed in this statement). 
\end{abstract}

\begin{keywords} 
Locomotion, Biomechanics, Ideal fluid, Geometric control theory
\end{keywords}

\begin{AMS}
74F10, 70S05, 76B03, 93B27
\end{AMS}

\section{Introduction}
\subsection{Context}
Researches on bio-inspired locomotion in fluid have now a long history. Focusing on the area of Mathematical Physics, the modeling leads to a system of PDEs (governing the fluid flow) coupled with a system of ODEs (driving the rigid motion of the immersed body). The first difficulty mathematicians came up against was to prove the well-posedness of such systems. This task was carried out in \cite{San-Martin:2008ab} (where the fluid is assumed to be viscous and governed by Navier Stokes equations), in \cite{Munnier:2008aa} (for an inviscid fluid with potential flow) and in \cite{Dal-Maso:2010aa} (for low Reynolds numbers swimmers, the flow being governed by the stationary Stokes equations).  

After the well-posedness of the fluid-swimmer dynamics were established, the following step was to investigate its controllability. On this topic, still  few theoretical results are available: In \cite{Alouges:2008aa}, the authors prove that a 3D three-sphere mechanism, swimming along a straight line in a viscous fluid is controllable. In \cite{Chambrion:2010aa}, we prove that a generic 2D example of shape changing body swimming in a potential flow can track approximately any given trajectory.

Some authors are rather interested in describing the dynamics of swimming in terms of Geometric Mechanics (within the general framework presented for instance in \cite{Marsden:1999aa}). We refer to \cite{Kanso:2005aa} and the very recent paper \cite{Kelly:2011aa} for references in this area. 

In this article, we consider a 3D shape changing body swimming in a potential flow. Under some symmetry assumptions (the swimmer is alone in the fluid and the fluid-swimmer system fills the whole space) we prove generic controllability results, generalizing and improving what has been obtained for a particular 2D model in \cite{Chambrion:2010aa}.
\subsection{Modeling}
\subsubsection*{Kinematics}
We assume that the swimmer is the only immersed body in the fluid and that the fluid-swimmer system fills the whole space, identified with $\mathbf R^3$. Two frames are required in the modeling, the first one $\mathfrak E:=(\mathbf E_1,\mathbf E_2,\mathbf E_3)$ is fixed and Galilean and the second one $\mathfrak e:=(\mathbf e_1,\mathbf e_2,\mathbf e_3)$ is moving with its origin lying at any time at the center of mass of the swimming body. At any moment, there exist a rotation matrix $R\in {\rm SO}(3)$ and a vector $\mathbf r\in\mathbf R^3$ such that, if $X:=(X_1,X_2,X_3)^t$ and $x:=(x_1,x_2,x_3)^t$ are the coordinates of a same vector in respectively $\mathfrak E$ and $\mathfrak e$, then the equality $X=Rx+\mathbf r$ holds. The matrix $R$ is supposed to give also the {\it orientation} of the swimmer. The rigid displacement of the swimmer, on a time interval $[0,T]$ ($T>0$), is thoroughly described by the functions $t:[0,T]\mapsto R(t)\in{\rm SO}(3)$ and $t:[0,T]\mapsto \mathbf r(t)\in\mathbf R^3$, which are the unknowns of our problem. Denoting their time derivatives by $\dot R$ and $\dot{\mathbf r}$, we can define  the linear velocity $\mathbf v:=(v_1,v_2,v_3)^t\in\mathbf R^3$ and angular velocity vector $\boldsymbol\Omega:=(\Omega_1,\Omega_2,\Omega_3)^t\in\mathbf R^3$ (both in $\mathfrak e$) by respectively $\mathbf v:=R^t\dot{\mathbf r}$ and $\hat{\boldsymbol\Omega}:= R^t\dot R$, where for every vector $\mathbf u:=(u_1,u_2,u_3)^t\in\mathbf R^3$, $\hat{\mathbf u}$ is the unique skew-symmetric matrix satisfying $\hat{\mathbf u}x:=\mathbf u\times x$ for every $x\in\mathbf R^3$. Vectors of $\mathbf R^6$ will sometimes be decomposed in the form $\mathbf f:=(\mathbf f^1,\mathbf f^2)^t\in\mathbf R^3\times\mathbf R^3$. For every $\mathbf f:=(\mathbf f^1,\mathbf f^2)\in\mathbf R^6$ and $\mathbf g:=(\mathbf g^1,\mathbf g^2)\in\mathbf R^6$, we can define $\mathbf f\star\mathbf g:=(\mathbf f^1\times\mathbf f^2,\mathbf f^1\times\mathbf g^2-\mathbf g^1\times\mathbf f^2)^t\in\mathbf R^6$.
\subsubsection*{Shape Changes}
Unless otherwise indicated, from now on all of the quantities will be expressed in the moving frame $\mathfrak{e}$. In our modeling, the domains occupied by the swimmer are images of the closed unit ball $\bar B$ by $C^1$ diffeomorphisms, isotopic the the identity, and tending to the identity at infinity, i.e. having the form ${\rm Id}+\vartheta$ where $\vartheta$ belongs to $D^1_0(\mathbf R^3)$ (definitions of the function spaces are given in the appendix, Section~\ref{SEC:diffeo}). With these settings, the shape changes over a time interval $[0,T]$ can be simply prescribed by means of an absolutely continuous function $t\in[0,T]\mapsto \vartheta_t\in D^1_0(\mathbf R^3)$. Then, denoting $\varTheta_t={\rm Id}+\vartheta_t$, the domain occupied by the swimmer at the time $t\geq 0$  is the closed, bounded, connected set $\bar{\mathcal B}_t:=\varTheta_t(\bar B)$ (do not forget that we are working in the frame $\mathfrak e$). We still require some notation: the unit ball's boundary is $\Sigma:=\partial B$, $\Sigma_t:=\varTheta_t(\Sigma)$ stands for the body-fluid interface, $\mathbf n_t$ is the unit normal vector to $\Sigma_t$ directed toward the interior of $\mathcal B_t$ and the fluid fills the exterior open set $\mathcal F_t:=\mathbf R^3\setminus\bar{\mathcal B}_t$. 

So-called {\it self-propelled} constraints are necessary to ensure that the deformations result from the work of internal forces (they avoid for instance translations to be considered as allowable shape changes).  Let a  function $\varrho\in C^0(\bar B)^+$ be given. The density of  the deformed body at the instant $t$, denoted by $\varrho_t\in C^0(\bar{\mathcal B}_t)$, is defined by:
\begin{equation}
\label{conservation_principle}
\varrho_t(x):=\varrho(\varTheta_t^{-1}(x))/J_t(\varTheta_t^{-1}(x)),\qquad(x\in \bar{\mathcal B}_t,\quad t\geq0),
\end{equation}
where $J_t:=|\det\nabla\varTheta_t|=\det\nabla\varTheta_t$ (we can drop the absolute values here because $\varTheta_t(x)\to x$ as $\|x\|_{\mathbf R^3}\to +\infty$ and $\det\nabla\varTheta_t(x)\neq 0$ for all $x\in\mathbf R^3$ and $t\geq0$).
The self-propelled constraints read:
\begin{subequations}
\label{self-propelled-cond}
\begin{equation}
\int_{\mathcal B_t}\!\!\varrho_t(x)x\,{\rm d}x=\mathbf 0\quad\text{and}\quad
\int_{\mathcal B_t}\!\!\varrho_t(x)\partial_t\varTheta_t(\varTheta_t^{-1}(x))\times x\,{\rm d}x=\mathbf 0\quad(t\geq 0).
\end{equation}
The former identity means that, as already mentioned before, the center of mass of the swimmer lies at any time at the origin of the moving frame. The latter relation tells us that the angular momentum (in $\mathfrak e$) has to remain constant as the swimmer undergoes shape changes. Equivalent formulations can be obtained up to a change of variables:
\begin{equation}
\int_{B}\!\!\varrho(x)\varTheta_t(x)\,{\rm d}x=\mathbf 0\quad\text{and}\quad
\int_{B}\!\!\varrho(x)\partial_t\varTheta_t(x)\times \varTheta_t(x)\,{\rm d}x=\mathbf 0\quad(t\geq 0).
\end{equation}
\end{subequations}
 \begin{figure} \label{FIG_Kinematic}
 \centerline{\input{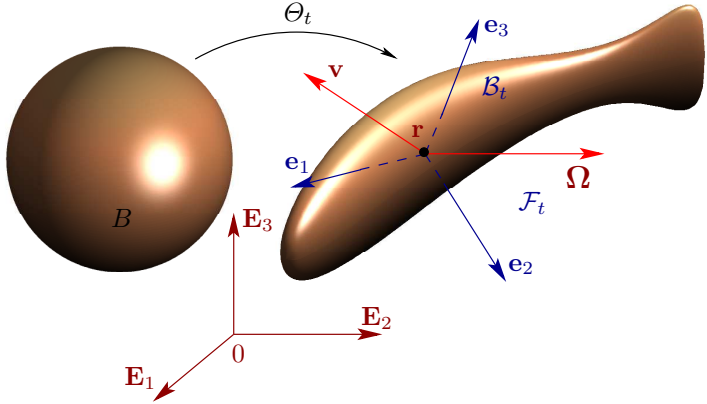}}
 \caption{Kinematics of the model: The Galilean frame $\mathfrak E:=(\mathbf E_j)_{1\leq j\leq 3}$ and the
 moving frame $\mathfrak e:=(\mathbf e_j)_{1\leq j\leq 3}$ with $\mathbf e_j=R\,\mathbf E_j$
 ($R\in{\rm SO}(3)$). Quantities are usually expressed in
 the moving frame. The domain of the body is $\bar{\mathcal B}_t$ at the time $t$ and $\mathcal B_t$ is the image of the
 unit ball $B$ by a diffeomorphism $\varTheta_t$. The open set $\mathcal F_t:=\mathbf R^3\setminus\bar{\mathcal B}_t$ is the domain of the fluid. The center
 of mass of the body is denoted $\mathbf r$ (in $\mathfrak E$), $\mathbf v:=R^t\dot{\mathbf r}$ is its translational velocity (in $\mathfrak e$) and $\boldsymbol\Omega$ its angular velocity.}
 \end{figure}
\subsubsection*{The Flow}
The fluid is assumed to be inviscid and incompressible. We denote by $\varrho_f>0$ its constant density. The flow is governed by Euler equations. According to Helmholtz's third theorem, if the flow is irrotational at the initial time, it remains irrotational for all times. In this case, the Eulerian velocity is equal at any time to the gradient of a potential function. According to Kirchhoff's law, the potential can be decomposed into a linear combination of elementary potentials, each one connected to a degree of freedom of the system (they consist here in the $6$ degrees of freedom of the rigid motion of the body plus those connecting to the deformations). 
These ideas have been thoroughly described in a series of papers \cite{Kanso:2005aa,Munnier:2008aa, Munnier:2008ab, Chambrion:2010aa}, to which we refer for further explanations.

The elementary potentials $\psi^i_t$, $(i=1,\ldots,6)$ corresponding to the rigid motion of the swimmer are harmonic in $\mathcal F_t$, tend to zero as infinity and satisfy the Neumann boundary conditions $\partial_n\psi^i_t=(\mathbf e_i\times x)\cdot \mathbf n_t$ ($i=1,2,3$) and $\partial_n\psi^i_t=\mathbf e_{i-3}\cdot\mathbf n_t$ ($i=4,5,6$)  on $\Sigma_t$.
They are well defined in the weighted Sobolev space $W^1(\mathcal F_t)$ (defined in the appendix, Section~\ref{SEC:diffeo}; see also \cite{Amrouche:1997aa} for details). The overall potential connecting to the rigid displacement is $\psi_t:=\sum_{i=1}^3\Omega_i\psi^i_t+\sum_{i=4}^5v_{i-3}\psi^i_t$ ($t\geq 0$).
On the other hand, the elementary potential $\varphi_t$ associated to the shape changes, harmonic as well in $\mathcal F_t$, satisfies the boundary condition $\partial_n\varphi_t=\mathbf w_t\cdot \mathbf n_t$ on $\Sigma_t$ ($i=1,\ldots,n$),
where  $\mathbf w_t(x):=\partial_t\varTheta_t(\varTheta^{-1}_t(x))$ $(x\in\mathbf R^3)$. Like the functions $\psi^i_t$ ($i=1,2,3$), $\varphi_t$ belongs to $W^1(\mathcal F_t)$ for all $t>0$.

\subsubsection*{Dynamics}
The modeling of moving rigid (or shape changing) bodies in an ideal fluid classically involves the notion of mass matrices. The mass of the body is $m:=\int_B\varrho\,{\rm d}x$ and its inertia tensor at the time $t\geq 0$ is defined by:
\begin{equation}
\label{def:Is}
\mathbb I(t):=\int_{\mathcal B_t}\varrho_t\big[\|x\|_{\mathbf R^3}^2{\rm Id}-x\otimes x\big]\,{\rm d}x=\int_{B}\varrho\big[\|\varTheta_t(x)\|^2_{\mathbf R^3}{\rm Id}-\varTheta_t(x)\otimes \varTheta_t(x)\big]\,{\rm d}x.
\end{equation}
We introduce $\mathbb M^r_b(t):={\rm diag}(\mathbb I(t),m{\rm I d})$ (a $6\times 6$ symmetric bloc diagonal matrix), $\mathbb M^r_f(t)$ (a $6\times 6$ symmetric matrix as well) whose entries read:
\begin{equation}
\label{def:coeff_mrf}
\varrho_f\int_{\mathcal F_t}\nabla\psi^i_t\cdot\nabla\psi^j_t\,{\rm d}x,\quad(1\leq i,j\leq 6),
\end{equation}
and we denote $\mathbb M^r(t):=\mathbb M^r_b(t)+\mathbb M^r_f(t)$.
We also need the $6\times 1$ column vector $\mathbf N(t)$, homogeneous to a momentum, whose elements read:
\begin{equation}
\label{def:coeff_n}
\varrho_f\int_{\mathcal F_t}\nabla\psi^i_t\cdot\nabla\varphi_t\,{\rm d}x,\quad(1\leq i\leq 6).
\end{equation}
If we neglect the buoyancy force, it has been proved in a series of papers (we refer for instance to the already mentioned articles \cite{Kanso:2005aa, Munnier:2008ab} or \cite{Chambrion:2010aa}) that the swimming motion is governed by the equation:
\begin{subequations}
\label{main_dynamics}
\begin{equation}
\label{dynamics}
\begin{pmatrix}\boldsymbol\Omega\\
\mathbf v\end{pmatrix}=-\mathbb M^r(t)^{-1}\mathbf N(t),\qquad (t\geq 0).
\end{equation}
At this point, we can identified $\varrho_t$ (or more simply $\varrho$ since they are linked through relation \eqref{conservation_principle}) as a parameter and the control as being the function $t\in[0,T]\mapsto \vartheta_t\in D_0^1(\mathbf R^3)$. Notice that the dependance of the dynamics in the control is strongly nonlinear. Indeed $\vartheta_t$ describes the shape of the body and hence also the domain of the fluid in which are set the PDEs of the potential functions involved in the expressions of the mass matrices $\mathbb M^r(t)$ and $\mathbf N(t)$. 

To determine the rigid motion, Equation \eqref{dynamics} has to be supplemented with the ODE:
\begin{equation}
\label{complement}
\frac{d}{dt}\begin{pmatrix}
R\\
{\mathbf r}
\end{pmatrix}=
\begin{pmatrix}
R\,\hat{\boldsymbol\Omega}\\
R\,\mathbf v
\end{pmatrix},\qquad(t>0),
\end{equation}
\end{subequations}
together with Cauchy data for $R(0)$ and $\mathbf r(0)$. 
Remark that the dynamics does not depend on $\varrho$ and $\varrho_f$ independently but only on the relative density $\varrho/\varrho_f$. So we can assume, without loss of generality, that $\varrho_f=1$ in the sequel.
\subsection{Main results}
The first result ensures the well posedness of System \eqref{main_dynamics} and the continuity of the input-output mapping:

\begin{proposition}
\label{existence}
For any $T>0$, any $\varrho\in C^0(\bar B)^+$, any absolutely continuous function $t\in[0,T]\mapsto \vartheta_t\in D^1_0(\mathbf R^3)$ (respectively of class $C^p$, $p=1,\ldots,+\infty$ or analytic) and any inital data $(R(0),\mathbf r(0))\in{\rm SO}(3)\times\mathbf R^3$, System  \eqref{main_dynamics} admits a unique solution $t\in[0,T]\mapsto (R(t),\mathbf r(t))\in{\rm SO}(3)\times\mathbf R^3$ (in the sense of Carath\'eodory) absolutely continuous on $[0,T]$ (respectively of class $C^p$ or analytic).

Let $t\in[0,T]\mapsto \vartheta^j_t\in D^1_0(\mathbf R^3)$ (for $j=1,\ldots,+\infty$) be a sequence of controls in $AC([0,T],D^1_0(\mathbf R^3))$ (see Section~\ref{SEC:diffeo} for a definition of this space) which converges in this space to a function $t\in[0,T]\mapsto\bar\vartheta_t\in D^1_0(\mathbf R^3)$. 
Let a pair $(R_0,\mathbf r_0)\in{\rm SO}(3)\times\mathbf R^3$ be given and denote $t\in[0,T]\mapsto(\bar R(t),\bar{\mathbf r}(t))\in{\rm SO}(3)\times\mathbf R^3$ the solution in $AC([0,T],{\rm SO}(3)\times\mathbf R^3)$ to System \eqref{main_dynamics}  
with control $\bar\vartheta$ and Cauchy data $(R_0,\mathbf r_0)$. Then, the unique solution $(R^j,\mathbf r^j)$ to System \eqref{main_dynamics} with control $\vartheta^j$ and Cauchy data $(R_0,\mathbf r_0)$ converges in $AC([0,T],{\rm SO}(3)\times\mathbf R^3)$ to $(\bar R,\bar{\mathbf r})$ as $j\to+\infty$. 
\end{proposition}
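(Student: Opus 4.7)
The plan is to decouple the system into (i) an \emph{algebraic} reconstruction of the body velocities $(\boldsymbol\Omega,\mathbf v)$ from the instantaneous shape $\vartheta_t$ via \eqref{dynamics}, and (ii) a linear time-varying ODE on $\mathrm{SO}(3)\times\mathbf R^3$ for the rigid placement, namely \eqref{complement}. This works because \eqref{dynamics} is not a differential relation in $(\boldsymbol\Omega,\mathbf v)$: once $\vartheta_t$ is known, the potentials $\psi^i_t$ and $\varphi_t$ are determined by exterior Neumann problems on $\mathcal F_t$, which yield $\mathbb M^r(t)$ and $\mathbf N(t)$ by the integrals \eqref{def:coeff_mrf}--\eqref{def:coeff_n}, hence $(\boldsymbol\Omega(t),\mathbf v(t))$ provided $\mathbb M^r(t)$ is invertible.

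The core technical work is then to prove that the map $\vartheta\in D^1_0(\mathbf R^3)\mapsto(\mathbb M^r(\vartheta),\mathbf N(\vartheta))\in\mathrm{Sym}_6(\mathbf R)\times\mathbf R^6$ is continuous, and as smooth as desired. For this, I would pull each potential back to the fixed exterior domain $\mathbf R^3\setminus\bar B$ via $\varTheta_t$: the functions $\tilde\psi^i:=\psi^i_t\circ\varTheta_t$ and $\tilde\varphi:=\varphi_t\circ\varTheta_t$ satisfy divergence-form elliptic problems $\mathrm{div}(A_t\nabla\tilde u)=0$ with coefficient matrix $A_t=J_t(\nabla\varTheta_t)^{-1}(\nabla\varTheta_t)^{-t}$ and transformed Neumann data, posed in the weighted Sobolev space $W^1(\mathbf R^3\setminus\bar B)$ where Lax--Milgram applies (see \cite{Amrouche:1997aa}). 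Since $\vartheta\in D^1_0$ controls $A_t$ and the boundary data in $L^\infty\cap C^0$, standard continuous (resp.\ $C^p$, analytic) dependence of solutions to elliptic boundary value problems on their coefficients transfers the regularity of $\vartheta\mapsto A_t$ to $\vartheta\mapsto(\tilde\psi^i,\tilde\varphi)$, and hence through the integrals \eqref{def:coeff_mrf}--\eqref{def:coeff_n} (rewritten on the fixed domain) to $\mathbb M^r$ and $\mathbf N$. Invertibility of $\mathbb M^r$ comes for free: $\mathbb M^r_b$ is positive definite because $\varrho>0$, while $\mathbb M^r_f$ is the positive semi-definite Gram matrix of $\{\nabla\psi^i\}$ in $L^2(\mathcal F_t)$, so their sum is uniformly coercive on compact sets of $D^1_0$.

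With $t\mapsto(\boldsymbol\Omega(t),\mathbf v(t))$ then inheriting the regularity of $t\mapsto\vartheta_t$ (absolutely continuous, $C^p$, or analytic), equation \eqref{complement} is a linear non-autonomous ODE on $\mathrm{SO}(3)\times\mathbf R^3$ whose right-hand side is bounded on $[0,T]$. Carath\'eodory theory yields existence, uniqueness and absolute continuity of the solution from any initial data, and higher temporal regularity is preserved by a standard bootstrap. For the continuity statement, convergence $\vartheta^j\to\bar\vartheta$ in $AC([0,T],D^1_0(\mathbf R^3))$ entails uniform convergence of the diffeomorphisms $\varTheta^j_t$ and of their first derivatives, hence of the pulled-back coefficients $A^j_t\to\bar A_t$ uniformly on $[0,T]$; the elliptic stability estimates give uniform convergence of the potentials and therefore of $(\mathbb M^r_j,\mathbf N_j)\to(\bar{\mathbb M}^r,\bar{\mathbf N})$ in $C([0,T])$. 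The classical continuous dependence of solutions of linear ODEs on their $L^1$ coefficients concludes.

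The main obstacle I expect lies in the PDE step of the second paragraph: obtaining stability of the weighted exterior Neumann problem under perturbation of the shape with merely $D^1_0$ regularity demands verifying that the ellipticity constant of $A_t$ stays bounded below along bounded families in $D^1_0$ and that the pullback and change-of-variables identities remain compatible with the behaviour at infinity encoded in $W^1(\mathbf R^3\setminus\bar B)$. Once these uniform coercivity and compatibility estimates are in place, everything else reduces to standard ODE arguments.
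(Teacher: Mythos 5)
Your overall plan --- decouple the instantaneous velocity reconstruction from the rigid placement ODE, pull the exterior Neumann problems back to the fixed domain $\mathbf R^3\setminus\bar B$ via $\varTheta_t$, use the resulting divergence-form elliptic problems and their stability in $W^1$ to establish the required regularity of the mass matrices, and close with Carath\'eodory/Gr\"onwall --- is precisely what the paper does, with the pull-back step packaged in Lemma~\ref{lemma:import} (where analyticity is obtained via the analytic implicit function theorem in Banach spaces). One regularity bookkeeping slip should be corrected, however: the Neumann boundary data of $\varphi_t$ involve $\partial_t\vartheta_t$, so $\mathbf N(t)$, and hence $(\boldsymbol\Omega(t),\mathbf v(t))$, are \emph{one order less regular in $t$} than $\vartheta_t$, not equally regular as you claim. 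In particular, when $\vartheta$ is merely absolutely continuous, the velocities are only $L^1$ in time and the right-hand side of \eqref{complement} is $L^1$ rather than bounded --- which is exactly why Carath\'eodory theory (which you do invoke) is needed, not a redundancy. Similarly, in the continuity part, $\mathbf N^j$ converges to $\bar{\mathbf N}$ only in $L^1([0,T])$, not in $C([0,T])$, because the $AC$ convergence of $\vartheta^j$ controls $\partial_t\vartheta^j$ only in $L^1$. Neither slip breaks the argument --- integrating \eqref{complement} regains one order of regularity, so $(R,\mathbf r)$ is as regular as $\vartheta$, and the Gr\"onwall/$L^1$ continuous-dependence estimate you cite works with $L^1$ convergence of the coefficients --- but as written the intermediate claims are overstated and a referee would ask you to fix them.
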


We denote by ${\rm M}(3)$ the Banach space of the $3\times 3$ matrices endowed wit any matrix norm. The main result of this article addresses the controllability of System \eqref{main_dynamics}:
\begin{theorem}(Synchronized Swimming)
\label{main_theorem_cont}
Assume that the following data are given: (i) A function $\bar\varrho$ in $C^0(\bar B)^+$ (the target density of the swimmer); (ii) A $C^1$ function $t\in[0,T]\mapsto \bar\vartheta_t\in D^1_0(\mathbf R^3)$ (the target shape changes) such that the pair $(\bar\varrho,\bar\vartheta)$ satisfies the self-propelled constraints \eqref{self-propelled-cond};
(iii) A $C^1$ function $t\in[0,T]\mapsto (\bar R(t),\bar{\mathbf r}(t))\in{\rm SO}(3)\times\mathbf R^3$ (the target trajectory to be followed). 
Then, for any $\varepsilon>0$, there exists a function  $\varrho\in C^0(\bar B)^+$ (the actual density of the swimmer) and a function $t\in[0,T]\mapsto \vartheta_t\in D^1_0(\mathbf R^3)$ (the actual shape changes that can be chosen of class $C^p$ for any $p=1,\ldots,+\infty$ or even analytic) such that the pair $(\varrho,\vartheta)$ satisfies \eqref{self-propelled-cond}, $\|\bar\varrho-\varrho\|_{C^0(\bar B)}<\varepsilon$, $\vartheta_0=\bar\vartheta_0$, $\vartheta_T=\bar\vartheta_T$ and $\sup_{t\in [0,T]}\Big(\|\bar\vartheta_{t}-\vartheta_t\|_{C^1_0(\mathbf R^3)^3}+\|\bar R(t)-R(t)\|_{{\rm M}(3)}+\|\bar{\mathbf r}(t)-\mathbf r(t)\|_{\mathbf R^3}\big)<\varepsilon$
where the function $t\in[0,T]\mapsto (R(t),\mathbf r(t))\in{\rm SO}(3)\times\mathbf R^3$ is the unique solution to system \eqref{main_dynamics} with initial data $(R(0),\mathbf r(0))=(\bar R(0),\bar{\mathbf r}(0))$ and control $\vartheta$.
\end{theorem}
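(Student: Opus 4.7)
The plan is to treat system \eqref{main_dynamics} as a control system on $G:={\rm SO}(3)\times\mathbf R^3$ whose state $(R,\mathbf r)$ is governed by the prescribed shape change $t\mapsto\vartheta_t$ through the algebraic relation \eqref{dynamics}. A crucial observation is that \eqref{dynamics} is time-reparametrization invariant and that $(\boldsymbol\Omega,\mathbf v)$ is a linear function of $\partial_t\vartheta_t$ for fixed $\vartheta_t$; hence one may think of the shape velocity as a driftless control and of the mapping $\vartheta\mapsto(R,\mathbf r)$ as an end-point mapping whose linearization at $\bar\vartheta$ splits into a base trajectory (the one produced by $\bar\vartheta$ alone) plus a linear response to a small perturbation $\delta\vartheta$. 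The goal is therefore to exhibit, for every $\varepsilon>0$, a perturbation $\delta\vartheta$ of $AC([0,T],D^1_0(\mathbf R^3))$-norm less than $\varepsilon$, vanishing at $t=0,T$, such that the resulting rigid motion of $\bar\vartheta+\delta\vartheta$ approximates $(\bar R,\bar{\mathbf r})$.

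The main step is a Lie-algebraic rank condition (LARC) at each time. I would fix once and for all a rich finite family $(\eta_1,\ldots,\eta_N)\subset D^1_0(\mathbf R^3)$ of admissible shape perturbations and consider the vector fields on $G$ obtained by plugging $\bar\vartheta_t+s\eta_k$ into \eqref{dynamics}; their $s$-derivatives at $s=0$, together with iterated Lie brackets, should span the six-dimensional tangent space $T_{(R,\mathbf r)}G\simeq\mathfrak{se}(3)$ along the nominal curve. These spans are computed from the elementary potentials $\psi^i_t,\varphi_t$ and from $\varrho$; the key fact is that the vanishing of any $6\times 6$ determinant involved is a nontrivial analytic condition on $\varrho$. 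Genericity then allows one to perturb $\bar\varrho$ by an arbitrarily small amount in $C^0(\bar B)^+$ so that LARC holds uniformly in $t\in[0,T]$ (by compactness), and this is precisely the step that forces the statement to allow a small modification of the density.

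Once LARC is established along the nominal trajectory, I would invoke classical results of geometric control theory (Sussmann, Liu--Sussmann on the approximation of iterated brackets by fast small-amplitude periodic controls, or chronological-calculus averaging) to build, for any $\varepsilon>0$, an $O(\varepsilon)$ perturbation $\delta\vartheta$ in $C^p$ (or analytic) whose high-frequency oscillations produce a net macroscopic effect exactly compensating the mismatch $(\bar R-\tilde R,\bar{\mathbf r}-\tilde{\mathbf r})$ between the target trajectory and the trajectory $(\tilde R,\tilde{\mathbf r})$ generated by $\bar\vartheta$ alone. The boundary conditions $\vartheta_0=\bar\vartheta_0$, $\vartheta_T=\bar\vartheta_T$ are imposed by multiplying $\delta\vartheta$ by a smooth cutoff near the endpoints of $[0,T]$ and absorbing the small resulting error by a further diminution of $\varepsilon$. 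Convergence of the approximate trajectory to $(\bar R,\bar{\mathbf r})$ is then a consequence of Proposition~\ref{existence} together with a Gronwall estimate on $G$.

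The hardest point will be to carry out this program \emph{while} keeping the self-propelled constraints \eqref{self-propelled-cond} exactly satisfied and \emph{while} preserving the genericity of the density. Indeed, \eqref{self-propelled-cond} cuts out a finite-codimension submanifold of admissible $(\varrho,\vartheta)$ on which the perturbations $\eta_k$ must be constrained to live, so that one must verify that LARC still holds in the restricted set of admissible directions; the strategy is to show that any spurious constraint forced on the $\eta_k$ by \eqref{self-propelled-cond} can be compensated either by re-selecting $\eta_k$ or by an additional infinitesimal adjustment of $\varrho$ inside the dense set where LARC holds, and to check that these two perturbations do not conflict, which requires balancing the codimension of the constraint manifold against the dimension of the family generating the rank condition.
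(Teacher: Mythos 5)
Your proposal has the right flavor (Lie-algebraic rank condition, genericity via analyticity, small oscillating controls), which is indeed the paper's strategy in outline, but it misses several pieces without which the argument does not close.

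First, the Lie brackets you want to use cannot be computed on $G={\rm SO}(3)\times\mathbf R^3$ alone. If the shape velocity is the control and the shape enters only as a time-dependent parameter, then the would-be ``vector fields'' $\mathbf X_k(\vartheta)$ and their brackets on $G$ do not involve the $\partial_s$-derivatives of the mass matrices, whereas those derivatives are precisely what produces the extra directions (see \eqref{identity:lie_brackets}). The correct setting is to lift to the augmented state $(R,\mathbf r,s)\in{\rm SO}(3)\times\mathbf R^3\times\mathcal S(c)$, treat the shape parameters $s$ as states, and regard the shape velocity as the control, as in \eqref{dynamics:2}--\eqref{dynamics:3}. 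Your phrase ``$s$-derivatives together with iterated Lie brackets'' suggests you are treating these as two separate ingredients; they are one and the same thing, but only after the lift.

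Second, and more fundamentally, you must \emph{exhibit} one configuration at which LARC holds before analyticity can give you density. The paper does this by an explicit computation with ellipsoidal shapes and rigid-shell deformations, producing the nondegeneracy condition \eqref{cond}. Without such a witness, the determinant you want to be a ``nontrivial analytic condition on $\varrho$'' could vanish identically on your chosen family $(\eta_1,\ldots,\eta_N)$. Relatedly, genericity is in the \emph{whole} swimmer configuration $(\varrho,\vartheta,\mathcal V)$, not in $\varrho$ alone: the density $\varrho$ only affects $\mathbb M^r_b$, while the hydrodynamic matrices $\mathbb M^r_f$, $\mathbb N$ depend on the shape and the chosen deformation fields; perturbing $\varrho$ alone for an a priori fixed family $(\eta_k)$ may not break a degeneracy. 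The paper's proof therefore re-selects the family of basic movements at each time step (Proposition~\ref{properties_of_SC}, Corollary~\ref{cor:2}), and the density perturbation only appears once, to reach a controllable configuration initially.

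Third, you flag but do not resolve the self-propelled constraint. This is not a side issue: the constraints cut out a finite-codimension submanifold (Theorem~\ref{theorem:1}), and a generic cutoff of $\delta\vartheta$ will violate them; the paper needs Proposition~\ref{prop:make_allowable}, which corrects any approximating deformation by composing with a suitable rigid motion (itself solving an auxiliary ODE on ${\rm SO}(3)$) before reconciling with a cutoff. Your sketch of ``re-selecting $\eta_k$ or infinitesimally adjusting $\varrho$'' is not a proof; it is not even clear it preserves the class-$C^p$ or analytic regularity of the final control.

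Finally, the paper structures the argument by discretizing $[0,T]$: at each subinterval the first basic movement is taken tangent to $\bar\vartheta$, a nearby controllable configuration is chosen (keeping $\varrho$ fixed after the first step), tracking is done subinterval by subinterval (Proposition~\ref{prop:prop12}), and then the concatenation is smoothed via Lemma~\ref{LEM:converg} and made admissible via Proposition~\ref{prop:make_allowable}, with the error controlled by the continuity of the input-output map (Proposition~\ref{existence}). Working directly along the whole nominal trajectory with a single fixed finite family is possible in principle, but it requires a uniform-in-time LARC that you have not established, and it obscures why only five basic movements suffice.
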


This theorem tells us that, maybe up to an arbitrarily small change of its density, any weighted 3D body undergoing approximately any prescribed shape changes can approximately track by swimming any given trajectory.  It may seem surprising that the shape changes, which are supposed to be the control of our problem, can also be prescribed. Actually, be aware that they are only {\it approximately} prescribed. We are going to show precisely that arbitrarily small superimposed shape changes suffice for controlling the swimming motion. This result improves what has been done in the article \cite{Chambrion:2010aa}  for a particular 2D model. 

When no macro shape changes are prescribed we have:
\begin{theorem}(Freestyle Swimming)
\label{main_theorem_free}
Assume that the following data are given: (i) A pair $(\bar\varrho,\bar\vartheta)\in C^0(\bar B)^+\times D^1_0(\mathbf R^3)$ satisfying \eqref{self-propelled-cond} (the target density and shape at rest) (ii) A $C^1$ function $t\in[0,T]\mapsto (\bar R(t),\bar{\mathbf r}(t))\in{\rm SO}(3)\times\mathbf R^3$ (the target trajectory). 
Then, for any $\varepsilon>0$ there exists a pair $(\varrho,\vartheta)\in C^0(\bar B)^+\times D^1_0(\mathbf R^3)$ (the actual density and shape at rest) such that (i) $\int_B\varrho\,\varTheta\,{\rm d}x=\mathbf 0$  (where $\varTheta:={\rm Id}+\vartheta$) (ii) $\|\bar\varrho-\varrho\|_{C^0(\bar B)}+\|\bar\vartheta-\vartheta\|_{D^1_0(\mathbf R^3)}<\varepsilon$ and (iii) for almost any $5$-uplet $(\mathbf V_1,\ldots,\mathbf V_5)\in (C^1_0(\mathbf R^3)^3)^5$ satisfying $\int_B \varrho\,\mathbf V_i\,{\rm d}x=\mathbf 0$, $\int_B\varrho\,\mathbf \varTheta\times\mathbf V_i\,{\rm d}x=\mathbf 0$ and $\int_B\varrho\,\mathbf V_i\times\mathbf V_j\,{\rm d}x=\mathbf 0$ ($i,j=1,\ldots,5$), there exists a function  $t\in[0,T]\mapsto s(t)\in\mathbf R^5$ (that can be chosen of class $C^p$ for any $p=1,\ldots,+\infty$ or even analytic) such that, using  $\vartheta_t:=\vartheta+\sum_{i=1}^5s_i(t)\mathbf V_i\in D^1_0(\mathbf R^3)$ as control in the dynamics \eqref{main_dynamics}, we get $\sup_{t\in [0,T]}\big(\|\bar R(t)-R(t)\|_{{\rm M}(3)}+\|\bar{\mathbf r}(t)-\mathbf r(t)\|_{\mathbf R^3}\big)<\varepsilon$
where the function $t\in[0,T]\mapsto (R(t),\mathbf r(t))\in{\rm SO}(3)\times\mathbf R^3$ is the unique solution to ODEs \eqref{main_dynamics} with initial data $(R(0),\mathbf r(0))=(\bar R(0),\bar{\mathbf r}(0))$.
\end{theorem}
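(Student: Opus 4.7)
The plan is to deduce Theorem \ref{main_theorem_free} from Theorem \ref{main_theorem_cont} via a finite-dimensional reduction combined with a genericity argument. I would first verify that the constraints imposed on $(\mathbf V_1,\ldots,\mathbf V_5)$ are exactly what is needed for the ansatz $\varTheta_t=\varTheta+\sum_i s_i(t)\mathbf V_i$ to preserve the self-propelled conditions \eqref{self-propelled-cond}: the centre-of-mass constraint follows immediately from $\int_B\varrho\mathbf V_i\,{\rm d}x=\mathbf 0$, while expanding the angular-momentum integral produces a linear contribution $\sum_i\dot s_i\int_B\varrho\mathbf V_i\times\varTheta\,{\rm d}x$ and a bilinear contribution $\sum_{i,j}\dot s_i s_j\int_B\varrho\mathbf V_i\times\mathbf V_j\,{\rm d}x$, both of which vanish by the assumptions of the theorem (the first by antisymmetry of the cross product applied to $\int_B\varrho\,\varTheta\times\mathbf V_i\,{\rm d}x=\mathbf 0$). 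Substituting into \eqref{main_dynamics}, the problem reduces to a drift-free control-affine system on ${\rm SO}(3)\times\mathbf R^3\times\mathbf R^5$ with five controls $u_i:=\dot s_i$, of the form $(\boldsymbol\Omega,\mathbf v)^t=\sum_i u_i\mathbf F_i(s)$, $\dot R=R\hat{\boldsymbol\Omega}$, $\dot{\mathbf r}=R\mathbf v$, $\dot s_i=u_i$, where the vector fields $\mathbf F_i$ depend smoothly on $s$ through the potentials $\psi^i_t,\varphi_t$.

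I would then apply Theorem \ref{main_theorem_cont} to the constant target $\bar\vartheta_t\equiv\bar\vartheta$: up to an arbitrarily small perturbation of $\bar\varrho$, any target $(\bar R,\bar{\mathbf r})$ can be approximately tracked while the shape remains close to $\bar\vartheta$ (with equality at the endpoints). This shows that the set reached from any initial configuration by the full shape-change dynamics is dense in ${\rm SO}(3)\times\mathbf R^3$. Combining this with Sussmann's Orbit Theorem and the continuity of the input-output map asserted in Proposition \ref{existence}, the Lie algebra generated at each configuration by the control vector fields associated to \emph{all} admissible linear deformation directions has full rank $6$ on the $(R,\mathbf r)$ block.

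The core step is to pass from this abstract, infinite-dimensional Lie rank property to a Lie algebra rank condition for five generic deformations lying inside the quadratic constraint variety of the statement. Since the $\mathbf F_i$ depend analytically on the $\mathbf V_i$ (the exterior Neumann problems defining $\psi^i_t,\varphi_t$ depend analytically on their boundary data), the set of 5-tuples for which $\mathbf F_1,\ldots,\mathbf F_5$ together with their ten first-order brackets project onto $T_{(R,\mathbf r)}({\rm SO}(3)\times\mathbf R^3)$ is open. Starting from finitely many deformations witnessing the full rank given by the previous step, then grouping or perturbing them inside the constraint variety, one produces at least one such good 5-tuple (five fields and ten brackets comfortably cover the $6$ required directions). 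The complement is then the zero locus of a nontrivial real-analytic function on the constraint variety, hence negligible in the sense of the \emph{almost any} of the statement. The freedom to perturb $\bar\varrho$ is used precisely here, to break any accidental symmetry of $\bar\varrho$ that could otherwise force the entire constraint variety into a common kernel of the relevant analytic map.

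With a good 5-tuple fixed, Chow's theorem applied to the drift-free system above yields approximate controllability of the $(R,\mathbf r)$ projection, and a standard time-rescaling and concatenation argument, analogous to that used in the proof of Theorem \ref{main_theorem_cont}, upgrades this to uniform approximate tracking over $[0,T]$ by a control $s(\cdot)$ of any prescribed regularity. I expect the main obstacle to be the genericity argument of the previous paragraph: the quadratic constraint set is not a vector space, and one must combine the abstract density furnished by Theorem \ref{main_theorem_cont} with a Sard-type parametric-transversality statement on this variety, carefully tracking the dependence of the vector fields $\mathbf F_i$ and their brackets on the $\mathbf V_i$.
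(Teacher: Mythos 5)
Your setup is sound: the verification that the five conditions on $(\mathbf V_1,\ldots,\mathbf V_5)$ make the ansatz $\varTheta_t=\varTheta+\sum_i s_i(t)\mathbf V_i$ compatible with \eqref{self-propelled-cond}, and the reformulation as a drift-free control-affine system on ${\rm SO}(3)\times\mathbf R^3\times\mathbf R^5$, match the machinery of Section~\ref{section:SC} and equations \eqref{dynamics:2}--\eqref{dynamics:3}. The analyticity and openness of the controllability condition with respect to the $\mathbf V_i$ is Proposition~\ref{properties_of_SC}, item 3.

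The genuine gap is in the passage from ``the Lie algebra generated by \emph{all} admissible deformations has full rank'' to ``some good 5-tuple inside the constraint variety exists.'' You propose to get the former from Theorem~\ref{main_theorem_cont} with constant target shape and the Orbit Theorem, and then to ``group or perturb'' the resulting finite collection of deformations down to five. This last step is not a proof, and you acknowledge as much. There are two concrete obstructions. First, knowing that finitely many deformations $\mathbf V_1,\ldots,\mathbf V_k$ (with $k$ possibly much larger than $5$) generate a full-rank Lie algebra does not give a sub-5-tuple that does: the Lie brackets at $s=0$ — see \eqref{identity:lie_brackets} and \eqref{lie_brackets} — involve the derivatives $\partial_{s_i}\mathbb M^r$, $\partial_{s_i}\mathbb N$, which depend \emph{quadratically} on the chosen $\mathbf V_i$, not only on their linear span, so taking linear combinations to ``group'' does not preserve the rank computation and in any case typically violates the bilinear constraints $\int_B\varrho\,\mathbf V_i\times\mathbf V_j\,{\rm d}x=\mathbf 0$. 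Second, the dimension count ``five fields and ten brackets comfortably cover the six required directions'' is not an argument; the entire content of the theorem is that these fifteen vectors \emph{do} generically span, and this cannot be deduced from dimension counting alone. Analyticity (Theorem~\ref{theorem:2}, Corollary~\ref{cor:2}) propagates non-degeneracy from a single witness to an open dense set, but one must first exhibit the witness.

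The paper supplies this witness by a concrete computation, not by a transversality or Sard-type argument: it fixes $\vartheta$ so that $\mathcal B^\diamond$ is an ellipsoid with no axis of symmetry, takes the five $\mathbf V_i$ to be (interior extensions of) rigid motions of the boundary $\Sigma$ — the Kozlov trick, made compatible with the modeling via Proposition~\ref{rectif_vector_fields} — and then exploits the covariance of the added-mass matrix under rigid motions (formulas \eqref{fluid_lie_brackets}, \eqref{deux_lie_brackets}) to reduce the first-order Lie brackets to explicit algebraic expressions in the eigenvalues $\mu_i$ of $\mathbb M^\diamond_f$ and the principal moments of inertia $I_j$. Choosing the density $\varrho$ to make \eqref{cond} hold then gives one controllable swimmer configuration, from which Proposition~\ref{everything_is_controllable} and Proposition~\ref{properties_of_SC} deliver both theorems at once. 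Your plan to deduce Theorem~\ref{main_theorem_free} \emph{from} Theorem~\ref{main_theorem_cont} is therefore not how the argument goes; the two are proved simultaneously from the single explicit controllable configuration, and without that configuration the genericity machinery has nothing to propagate.
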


Differently stated, we claim in this Theorem that any weighted 3D body (maybe up to an arbitrarily small change of its density) is able to swim by means of allowable deformations (i.e. satisfying the self-propelled constraints) obtained as a suitable combination of  pretty much any given five basic movements. 

The proofs rely on the following main ideas: First, we shall identify a set of parameters necessary to thoroughly characterize a swimmer and its way of swimming (these parameters are its density, its shape and a finite number of basic movements, satisfying the self-propelled constraints \eqref{self-propelled-cond}). Any set of such parameters will be termed a {\it swimmer configuration} (denoted SC in short). Then, the set of all of the SC will be shown to be an (infinite dimensional) analytic connected  embedded submanifold of a  Banach space. 

The second step of the reasoning will consist in proving that the swimmer's ability to track any given trajectory (while undergoing any given shape changes) is related to the vanishing of some analytic functions depending on the SC. These functions are connected to the determinant of some vector fields and their Lie brackets (we will use here some classical result of Geometric Control Theory). Eventually, by direct calculation, we will prove that at least one swimmer (corresponding to one particular SC) has this ability. An elementary property of analytic functions will eventually allow us to conclude that almost any SC (or equivalently any swimmer) has this property.

\begin{remark}
The authors conjecture that in both Theorem~\ref{main_theorem_cont} and Theorem~\ref{main_theorem_free}, the actual density $\varrho$ can be chosen equal to the target density $\bar\varrho$. At this point however and although it is very unlikely, it can not be excluded that all of the swimmers with a particular density might be unable to swim. This issue also appeared in \cite{Chambrion:2010aa}.
\end{remark}
\subsection{Outline of the paper}
The next Section is dedicated to the notion of swimmer configuration (definition and properties). In Section~\ref{sensiti:mass} we show that the mass matrices are analytic functions in the SC (seen as a variable) and in Section~\ref{SEC:control_problem} we will restate the control problem in order to fit with the general framework of Geometric Control Theory. A particular case of swimmer will be shown to be controllable. In Section~\ref{sec:main_results} the proof of the main results will be performed. Section~\ref{sec:conclusion} contains some words of conclusion. Many technical results are gathered in the appendix to avoid overloading the rest of the paper.  
\section{Swimmer Configuration}
\label{section:SC}
A {\it swimmer configuration} is a set of parameters characterizing swimmers whose deformations consist in a combination of a finite number of basic movements. 
\begin{definition}
\label{def:sc}
For any positive integer $n$, we denote $\mathcal C(n)$ the subset of $C^0(\bar B)^+\times D^1_0(\mathbf R^3)\times (C^1_0(\mathbf R^3)^3)^n$ consisting of all of the triplets $c:=(\varrho,\vartheta,\mathcal V)$ such that, 
denoting $\varTheta:={\rm Id}+\vartheta$ and $\mathcal V:=(\mathbf V_1,\ldots,\mathbf V_n)$, the following conditions hold (i) the set $\{\mathbf V_i|_{\bar B}\cdot\mathbf e_k,\,1\leq i\leq n,\,k=1,2,3\}$ is a free family in $C^1_0(\bar B)$ (ii) every pair $(\mathbf V,\mathbf V')$ of elements of $\{\varTheta,\mathbf V_1,\ldots,\mathbf V_n\}$ satisfies
$\int_{B}\varrho\,\mathbf V\,{\rm d}x=\mathbf 0$ and $\int_{B}\varrho\,\mathbf V\times \mathbf V'\,{\rm d}x=\mathbf 0$.

We call swimmer configuration (SC in short) any element $c$ of $\mathcal C(n)$.
\end{definition}

By definition, $D^1_0(\mathbf R^3)$ is open in $C^1_0(\mathbf R^3)^3$ (see appendix, Section~\ref{SEC:diffeo}). We deduce that for any $c\in\mathcal C(n)$, the set $\{s:=(s_1,\ldots,s_n)^t\in\mathbf R^n\,:\,\vartheta+\sum_{i=1}^n s_i\mathbf V_i\in D^1_0(\mathbf R^3)\}$ is open as well in $\mathbf R^n$ and we denote $\mathcal S(c)$ its connected component containing $s=0$.
\begin{definition}
For any positive integer $n$, we call extended swimmer configuration (ESC in short) any pair $\mathbf c:=(c,s)$ such that $c\in\mathcal C(n)$ and $s\in\mathcal S(c)$. We denote $\mathcal C_X(n)$ the set of all of these pairs.
\end{definition}
\subsubsection*{Restatement of the problem in terms of SC and ESC}
Pick a SC in $\mathcal C(n)$ (for some integer $n$). The characteristics of the corresponding swimmer can be deduced from $c$ as follows:  If $c$ is equal to $(\varrho,\mathcal\vartheta,\mathcal V)$ with $\mathcal V:=(\mathbf V_1,\ldots,\mathbf V_n)$, the shape of the swimmer at rest is $\bar{\mathcal B} :=\varTheta(\bar B)$ where $\varTheta:={\rm Id}+\vartheta$. When swimming, it can occupy the domains $\bar{\mathcal B}_{\mathbf c}:=\varTheta_s(\bar B)$ for all $s\in\mathcal S(c)$ ($\mathbf c:=(c,s)\in \mathcal C(n)$ is hence an ESC), where $\varTheta_s:={\rm Id}+\vartheta+\sum_{i=1}^n s_i\mathbf V_i$.
Still for any $s\in\mathcal S(c)$, its density is the function $\varrho_s\in C^0(\bar{\mathcal B}_{\mathbf c})^+$ defined by $\varrho_s(x):=\varrho(\varTheta^{-1}_s(x))/J_s(\varTheta_s^{-1}(x))$ with $J_s:=\det\nabla\varTheta_s$. Notice that within this construction, the shape changes on a time interval $[0,T]$ ($T>0$) are merely given through an absolutely continuous  function $t:[0,T]\mapsto s(t)\in\mathcal S(c)$. 
If $t\in[0,T]\mapsto \dot s(t)\in\mathbf R^n$ stands for its time derivative, the Lagrangian velocity at a point $x$ of $\bar B$ is $\sum_{i=1}^n\dot s_i(t) \mathbf V_i(x)$ while the Eulerian velocity at a point $x\in\bar{\mathcal B}_{\mathbf c}$ is $\sum_{i=1}^n\dot s_i(t) \mathbf w^i_s(x)$ where $\mathbf w^i_s(x):=\mathbf V_i(\varTheta_s^{-1}(x))$. Due to assumption~(ii) of Definition~\ref{def:sc}, the self-propelled constraints \eqref{self-propelled-cond} are automatically satisfied.

The harmonic elementary potential functions of the fluid corresponding to the rigid motions depend only on the ESC. Therefore, they will be denoted in the sequel $\psi^i_{\mathbf c}$ instead of $\psi^i_t$. The same remark holds for the inertia tensor $\mathbb I(t)$ and the mass matrices $\mathbb M^r(t)$, $\mathbb M^r_b(t)$ and $\mathbb M^r_f(t)$ whose notation is turned into $\mathbb I(\mathbf c)$, $\mathbb M^r(\mathbf c)$, $\mathbb M^r_b(\mathbf c)$ and $\mathbb M^r_f(\mathbf c)$ respectively. The elementary potential connected to the shape changes can be decomposed into $\sum_{i=1}^n\dot s_i\varphi^i_{\mathbf c}$. In this sum, each potential function $\varphi^i_{\mathbf c}$ is harmonic in $\mathcal F_{\mathbf c}:=\mathbf R^3\setminus \bar{\mathcal B}_{\mathbf c}$ and satisfies on $\Sigma_{\mathbf c}:=\partial {\mathcal B}_{\mathbf c}$ the Neumann boundary conditions $\partial_n\varphi^i_{\mathbf c}=\mathbf w^i_s\cdot \mathbf n_{\mathbf c}$
, $\mathbf n_{\mathbf c}$ being the unit normal to $\Sigma_{\mathbf c}$ directed toward the interior of $\mathcal B_{\mathbf c}$.
Introducing the mass matrix $\mathbb N(\mathbf c)$, whose elements are $\varrho_f\int_{\mathcal B_{\mathbf c}}\nabla\psi^i_{\mathbf c}\cdot\nabla\varphi^j_{\mathbf c}{\rm d}x$ ($1\leq i\leq 6,\,1\leq j\leq n$) 
(recall that $\varrho_f$ can be chosen equal to 1), the dynamics \eqref{dynamics} can now be rewritten in the form:
\begin{equation}
\label{dynamics:1}
\begin{pmatrix}\boldsymbol\Omega\\
\mathbf v\end{pmatrix}=-\mathbb M^r(\mathbf c)^{-1}\mathbb N(\mathbf c)\dot s,\qquad (t\geq 0).
\end{equation}
Let us focus on the properties of $\mathcal C(n)$ and $\mathcal C_X(n)$.

\begin{theorem}
\label{theorem:1}
For any positive integer $n$, the set  ${\mathcal C}(n)$ is an analytic connected embedded submanifold  of $C^0(\bar B)\times C^1_0(\mathbf R^3)^3\times (C^1_0(\mathbf R^3)^3)^n$ of codimension $N:=3(n+2)(n+1)/2$.
\end{theorem}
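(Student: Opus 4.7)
My plan is to realize $\mathcal C(n)$ as the zero set of an analytic submersion onto $\mathbf R^N$ and then apply the analytic implicit function theorem. Let $E := C^0(\bar B)\times C^1_0(\mathbf R^3)^3\times (C^1_0(\mathbf R^3)^3)^n$ be the ambient Banach space, and let $\mathcal U\subset E$ denote the open subset on which the free-family condition~(i) of Definition~\ref{def:sc} holds; openness follows because linear independence of finitely many vectors in a Banach space is preserved under small perturbations. On $\mathcal U$ I would introduce the map $\Phi : \mathcal U \to \mathbf R^N$ whose components are the $3(n+1)$ scalar entries of the first moments $\int_B\varrho\,\mathbf V\,{\rm d}x$ for $\mathbf V$ ranging over $\{\varTheta,\mathbf V_1,\ldots,\mathbf V_n\}$, together with the $3\binom{n+1}{2}$ scalar entries of the cross moments $\int_B\varrho\,\mathbf V\times\mathbf V'\,{\rm d}x$ for unordered pairs. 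A direct count gives $3(n+1) + 3\binom{n+1}{2} = 3(n+1)(n+2)/2 = N$, so the target dimension matches. Since $\Phi$ is polynomial (linear in $\varrho$ and quadratic in $(\vartheta,\mathcal V)$) it is real-analytic, and $\mathcal C(n) = \Phi^{-1}(\mathbf 0)\cap \mathcal U$ by construction.

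The crucial step is the surjectivity of ${\rm d}\Phi(c)$ at every $c = (\varrho,\vartheta,\mathcal V)\in\mathcal C(n)$. I would exploit the natural block decomposition: the perturbation $\delta\vartheta$ affects only $P_0 := \int_B\varrho\,\varTheta\,{\rm d}x$ and the $n$ blocks $L_{0j} := \int_B\varrho\,\varTheta\times\mathbf V_j\,{\rm d}x$, while $\delta\mathbf V_k$ (for $k\geq 1$) affects only $P_k$ and the $n$ blocks $L_{ik}$ with $i\neq k$. Since $\varrho>0$ on $\bar B$, the mean maps $\delta\vartheta\mapsto\int_B\varrho\,\delta\vartheta$ and $\delta\mathbf V_k\mapsto\int_B\varrho\,\delta\mathbf V_k$ are each surjective onto $\mathbf R^3$; restricting to their (infinite-dimensional) kernels, it suffices to check that the induced cross-moment maps $\delta\mathbf V_k \mapsto (\int_B\varrho\,\mathbf V_i\times\delta\mathbf V_k)_{i\neq k}$ are surjective onto $\mathbf R^{3n}$. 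Using the identity $(\mathbf V_i\times\delta\mathbf V_k)\cdot\mathbf e_\ell = \delta\mathbf V_k\cdot(\mathbf e_\ell\times\mathbf V_i)$, this reduces to the $\mathbf R$-linear independence, in $C^0(\bar B)^3$, of the vector fields $\{\mathbf e_\ell\times\mathbf V_i\}_{i\neq k,\,\ell=1,2,3}$. Expanding $\sum_i\mathbf c_i\times\mathbf V_i = 0$ in components produces a linear relation among the scalar functions $\{V_{i,p}\}$ whose coefficients are the $\pm c_{i,q}$'s, and the free-family hypothesis~(i) then forces every $\mathbf c_i$ to vanish. The blocks $L_{0j}$ involving $\varTheta$ are handled symmetrically by $\delta\vartheta$, using the additional elementary fact that the three components of $\varTheta$ are linearly independent on $\bar B$ (since $\varTheta$ is a diffeomorphism onto its image). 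Any residual cross-coupling between blocks is absorbed by the remaining freedom in $\delta\varrho$, which contributes $N$ further scalar functionals $\delta\varrho\mapsto\int_B\delta\varrho\,f_\alpha$.

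With surjectivity established, $\ker{\rm d}\Phi(c)$ is closed of finite codimension in the Banach space $E$, hence automatically complemented, so the analytic implicit function theorem produces a real-analytic chart near $c$ displaying $\mathcal C(n)$ as an embedded submanifold of codimension $N$. For connectedness, I would fix a simple reference configuration $c_0 := (\varrho_0, 0, \mathbf V^0_1,\ldots,\mathbf V^0_n)\in\mathcal C(n)$ with $\varrho_0$ constant and the $\mathbf V^0_i$ chosen as odd vector fields on $\bar B$ (so that (ii) is automatic by symmetry). Given any $c\in\mathcal C(n)$, interpolate in $E$ toward $c_0$, rerouting through finitely many intermediate points if needed to keep the path inside $\mathcal U$ and the $D^1_0$-admissible region, then use the local submersion charts above to project each point of the path back onto $\mathcal C(n)$; this yields path-connectedness.

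I expect the main obstacle to be the surjectivity verification: even though each partial differential is individually tractable, the three groups of perturbations $(\delta\varrho,\delta\vartheta,\delta\mathcal V)$ interact through the same quadratic moment functionals, and the argument leans on the clean but non-trivial algebraic implication by which the free-family hypothesis~(i) propagates to the independence of the cross-product test fields $\mathbf e_\ell\times\mathbf V_i$. Everything else (analyticity of $\Phi$, the complementation of $\ker{\rm d}\Phi(c)$, and the path-connectedness step) reduces to standard Banach-space arguments.
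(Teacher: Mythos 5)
Your overall roadmap agrees with the paper's: realize $\mathcal C(n)$ as $\Phi^{-1}(\mathbf 0)$ for an analytic map $\Phi$ into $\mathbf R^N$ defined on the open set where hypothesis~(i) of Definition~\ref{def:sc} holds, verify that ${\rm d}\Phi(c)$ is surjective for $c\in\mathcal C(n)$, and invoke the analytic implicit function theorem; the codimension count is correct. The problem is the surjectivity step, which is where the real content lies, and there your argument has a genuine gap.

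Your block argument reduces surjectivity to the linear independence of $\{\mathbf e_\ell\times\mathbf V_i\}_{i\neq k,\,\ell}$ and invokes hypothesis~(i). Three problems. First, the index set $\{i\neq k\}$ contains $i=0$, i.e.\ $\mathbf V_0=\varTheta$, and hypothesis~(i) says nothing about $\varTheta$; the fact that the components of $\varTheta$ are independent among themselves is not the joint independence the argument needs, and nothing in Definition~\ref{def:sc} prevents $\varTheta|_{\bar B}\cdot\mathbf e_\ell$ from lying in the span of the $\mathbf V_i|_{\bar B}\cdot\mathbf e_m$'s. Second, once you restrict $\delta\mathbf V_k$ to have zero $\varrho$-mean, the resulting orthogonality condition only reads $\sum_{i\neq k}\boldsymbol\beta_i\times\mathbf V_i=\text{const}$ on $\bar B$, not $=\mathbf 0$; to eliminate that constant one must multiply by $\varrho$ and integrate, using the self-propelled constraints $\int_B\varrho\,\mathbf V_i\,{\rm d}x=\mathbf 0$ from hypothesis~(ii), which your proposal never invokes in the surjectivity argument. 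Third, the blocks are not triangular --- each $L_{ij}$ with $i,j\geq1$ responds to both $\delta\mathbf V_i$ and $\delta\mathbf V_j$ --- so surjectivity of the individual maps $\delta\mathbf V_k\mapsto(L_{ik})_{i\neq k}$ does not by itself give joint surjectivity, and the closing appeal to $\delta\varrho$ absorbing ``residual cross-coupling'' is not substantiated (and, in fact, the paper's proof never needs the $\delta\varrho$ directions). The paper avoids all of this by working with the adjoint kernel throughout and by a specific \emph{ordering} of deductions: integrating the pointwise relations against $\varrho$ and using hypothesis~(ii) kills the coefficients dual to the $P_k$'s first; then the $k=0$ relation involves only $\mathbf V_1,\ldots,\mathbf V_n$ (not $\varTheta$), so hypothesis~(i) kills the $\boldsymbol\beta_{0j}$'s; only then are the $k\geq1$ relations treated, where the $\varTheta$-term has already been shown to vanish. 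Without that sequencing your surjectivity claim does not close. Finally, the connectedness sketch (``project each point of the path back onto $\mathcal C(n)$'') is considerably more optimistic than what is required: the paper constructs an explicit path inside $\mathcal C(n)$, correcting piecewise-linear interpolations of $\vartheta$ by a recentering flow so that $\int_B\varrho\,\varTheta\,{\rm d}x=\mathbf 0$ is maintained, and choosing intermediate vector fields $\mathbf W_i$ satisfying all the needed orthogonality and linear-independence conditions, none of which follow automatically from a naive projection.
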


The definition and the main properties of analytic functions in Banach spaces are summarized in the article \cite{Whittlesey:1965aa}. 

\begin{proof}
For any $c:=(\varrho,\vartheta,\mathcal V)\in C^0(\bar B)\times C^1_0(\mathbf R^3)^3\times (C^1_0(\mathbf R^3)^3)^n$, denote $\mathbf V_0:={\rm Id}+\vartheta$ and $\mathcal V:=(\mathbf V_1,\ldots,\mathbf V_n)$. Then, define 
for $k=0,1,\ldots,n$, the functions $\Lambda_k:C^0(\bar B)\times C^1_0(\mathbf R^3)^3\times (C^1_0(\mathbf R^3)^3)^n\to\mathbf R^{3(n+1-k)} $ by $\Lambda_k(c):=
\Big(\int_{B}\varrho\,\mathbf V_k\,{\rm d}x,\,
\int_{B}\varrho \,\mathbf V_k\times \mathbf V_{k+1}\,{\rm d}x,\,\ldots,
\int_{B}\varrho\,\mathbf V_k\times \mathbf V_n\,{\rm d}x\Big) ^t$.
Every function $\Lambda_k$ is analytic and we draw the same conclusion for $\Lambda:=(\Lambda_0,\ldots,\Lambda_n)^t:C^0(\bar B)\times C^1_0(\mathbf R^3)^3\times (C^1_0(\mathbf R^3)^3)^n\to \mathbf R^N$ ($N:=3(n+2)(n+1)/2$). In order to prove that $\partial_c\Lambda(c)$ (the differential of $\Lambda$ at the point $c$) is onto for any $c\in\mathcal C(n)$, assume that there exist $(n+2)(n+1)/2$ vectors $\boldsymbol\alpha_i^j\in\mathbf R^3$ ($0\leq i\leq j\leq n$) such that:
\begin{equation}
\label{gamma_onto}
\sum_{i=0}^n\boldsymbol\alpha_i\cdot\langle\partial_c\Lambda(c),c^h\rangle=\mathbf 0,\qquad\forall\,c^h\in C^0(\bar B)\times C^1_0(\mathbf R^3)^3\times (C^1_0(\mathbf R^3))^3,
\end{equation}
where $\boldsymbol\alpha_i:=(\boldsymbol\alpha_i^i,\boldsymbol\alpha_i^{i+1},\ldots,\boldsymbol\alpha_i^n)^t\in\mathbf R^{3(n+1-i)}$ ($j=0,\ldots,n$) and $c^h:=(\varrho^h,\vartheta^h,\mathcal V^h)\in C^0(\bar B)\times C^1_0(\mathbf R^3)^3\times (C^1_0(\mathbf R^3)^3)^n$ with $\mathbf V_0^h:={\rm Id}+\vartheta^h$ and $\mathcal V^h:=(\mathbf V^h_1,\ldots,\mathbf V^h_n)$.
Reorganizing the terms in \eqref{gamma_onto}, we obtain that:
\begin{multline*}
\int_{B}\varrho^h\Big[\sum_{k=0}^n\boldsymbol\alpha_k^k\cdot\mathbf V_k+\sum_{0\leq i<j\leq n}\!\!\!\boldsymbol\alpha_i^j\cdot(\mathbf V_i\times\mathbf V_j)\Big]{\rm d}x+\\
\sum_{k=0}^n\int_{B}\varrho\mathbf V_k^h\cdot\Big[\sum_{j=0}^{k-1}\boldsymbol\alpha_j^k\times\mathbf V_j+\boldsymbol\alpha_k^k-\sum_{j=k+1}^{n}\boldsymbol\alpha_k^j\times\mathbf V_j\Big]{\rm d}x=0.
\end{multline*}
Since this identity has to be satisfied for any $(\varrho^h,\vartheta^h,\mathcal V^h)\in C^0(\bar B)\times C^1_0(\mathbf R^3)^3\times (C^1_0(\mathbf R^3))^3$, we deduce that, for every $k=0,\ldots,n$:
\begin{equation}
\label{identity:1}
 \sum_{p=0}^n\boldsymbol\alpha_p^p\cdot\mathbf V_p+\!\!\!\!\!\sum_{0\leq i<j\leq n}\!\!\!\boldsymbol\alpha_i^j\cdot(\mathbf V_i\times\mathbf V_j)=0
 \text{~~~and~~~}\sum_{j=0}^{k-1}\boldsymbol\alpha_j^k\times\mathbf V_j+\boldsymbol\alpha_k^k-\!\!\!\sum_{j=k+1}^{n}\boldsymbol\alpha_k^j\times\mathbf V_j=\mathbf 0.\end{equation}
Multiplying the latter equality by $\varrho$ and integrating over $B$, we get that $\boldsymbol\alpha^k_k=\mathbf 0$ ($k=0,\ldots,n$).  The first equality in \eqref{identity:1} is now just a linear combination of the second ones (for $k=0,\ldots,n$), so we can drop it. 
Taking into account Hypothesis (ii) of Definition~\ref{def:sc}, latter identity in \eqref{identity:1} with $k=0$ leads to $\boldsymbol\alpha_0^j=\mathbf 0$ for every $j=1,\ldots,n$. There are no more terms involving $\mathbf V_0$ in the other equations and invoking again Hypothesis (ii) we eventually get $\boldsymbol\alpha_i^j=\mathbf 0$ for $1\leq i<j\leq n$. So, equality \eqref{gamma_onto} entails that $\boldsymbol\alpha_i=\mathbf 0$ for all $i=0,\ldots,n$ and the mapping $\partial_c\Lambda(c)$ is onto for all $c\in\mathcal C(n)$.

The linear space $X=\mathrm{Ker}\,\partial_c \Lambda(c)$ is closed since $\Lambda$ is analytic. Let $Y$ be an algebraic supplement of $X$ in $C^0(\bar{B})\times C^1_0(\mathbf R^3)^3 \times (C^1_0(\mathbf R^3)^3)^n$, and denote by $\Pi_Y$ the linear projection onto $Y$ along $X$. A crucial observation is that the linear space $Y$ is isomorphic to $\mathbf{R}^N$ and hence it is finite dimensional and closed in  $C^0(\bar{B})\times C^1_0(\mathbf R^3)^3 \times (C^1_0(\mathbf R^3)^3)^n$. Define the analytic mapping $f:X\times Y \rightarrow \mathbf{R}^N$ by
$f(x,y)=\Lambda(c+x+y)$. The mapping $\partial_yf(0,0)=\partial_c\Lambda(c)\circ \Pi_Y$ being onto, the implicit function theorem (analytic version in Banach spaces, see \cite{Whittlesey:1965aa}) asserts that there exist an open neighborhood $\mathcal O_1$ of $0$ in $X$, an open neighborhood $\mathcal O_2$ of $0$ in $Y$, and an analytic mapping $g:\mathcal O_1 \rightarrow Y$ such that $g(0)=0$ and, for every $(x,y)$ in $\mathcal O_1\times \mathcal O_2$, the two following assertions are equivalent: (i) $f(x,y)=0$ (or, in other words, $c+x+y$ belongs to $\mathcal{C}(n)$), and
(ii) $y=g(x)$.
The analytic mapping $g$ provides a local parameterization of $\mathcal{C}(n)$ in a neighborhood of $c$.

In order to prove that $\mathcal C(n)$ is path-connected, consider two elements $c^\dagger:=(\varrho^\dagger,\vartheta^\dagger,\mathcal V^\dagger)$ and $c^\ddagger:=(\varrho^\ddagger,\vartheta^\ddagger,\mathcal V^\ddagger)$ of $\mathcal C(n)$ and denote $\varTheta^\dag:={\rm Id}+\vartheta^\dag$, $\mathcal V^\dag:=(\mathbf V_1^\dag,\ldots,\mathbf V_n^\dag)$ and $\varTheta^\ddag:={\rm Id}+\vartheta^\ddag$, $\mathcal V^\ddag:=(\mathbf V_1^\ddag,\ldots,\mathbf V_n^\ddag)$.
According to Definition~\ref{def_D10}, $D^1_0(\mathbf R^3)$ is open and connected. It entails that it is always possible to find a continuous, piecewise linear path $t:[0,1]\mapsto \bar\vartheta_t\in D^1_0(\mathbf R^3)$ such that $\bar\vartheta_{t=0}=\vartheta^\dag$ and $\bar\vartheta_{t=1}=\vartheta^\ddag$. We introduce $0=t_0<t_1<\ldots<t_k=1$, a subdivision of the interval $[0,1]$ such that $t\mapsto\bar\vartheta_t$ is linear on every subinterval $[t_j,t_{j+1}]$ ($j=0,\ldots,k-1$) and we denote $\bar\varTheta_t:={\rm Id}+\bar\vartheta_t$, $\bar\vartheta^j:=\bar\vartheta_{t=t_j}$, $\bar\varTheta^j:={\rm Id}+\bar\vartheta^j$  ($j=0,\ldots,k$). Let us introduce as well the continuous  functions $t\in[0,1]\mapsto \varrho_t:=t\varrho^\ddag+(1-t)\varrho^\dag\in C^0(\bar B)$ and $t\in[0,1]\mapsto \mathbf u(t):=-\int_B\varrho_t\bar\varTheta_t{\rm d}x/\int_B\varrho_t{\rm d}x\in\mathbf R^3$. The set $\cup_{t\in[0,T]}\bar\varTheta_t(\bar B)+\mathbf u(t)$ being compact, it is contained in a large ball $\Omega$. We introduce $\Omega'$ an even larger ball containing $\Omega$ and a cut-off function $\chi$ defined in $\mathbf R^3$ such that $0\leq \chi\leq 1$, $\chi=1$ in $\Omega$ and $\chi=0$ in $\mathbf R^3\setminus\bar\Omega'$. 
The derivative $\dot{\mathbf u}$ of $\mathbf u$ exists everywhere on $]0,T[$ excepted maybe at the points $t_1,\ldots,t_{k-1}$. The flow associated with the Carath\'eodory's solutions of the Cauchy problem: $\dot X_t(x)=\dot{\mathbf u}(t)\chi(X_t(x))$, $(t>0)$, $X_{t=0}(x)=x$ is well defined (see \cite[Theorem 1A, page 57]{Lee:1967aa}). Moreover, for every fixed $t\in[0,1]$, the mapping $x\in\mathbf R^3\mapsto X_t(x)\in\mathbf R^3$ is $C^\infty$. Consider now the mappings $t\in[0,1]\mapsto \vartheta_t:=X_t\circ\bar\varTheta_t-{\rm Id}$ and $\varTheta_t:={\rm Id}+\vartheta_t$. If $x\in \mathbf R^3\setminus\bar\Omega'$, $\varTheta_t(x)=\bar\varTheta_t(x)$ for all $t\in[0,T]$ and if $x\in\bar B$ then $\vartheta_t(x)=\bar\vartheta_t(x)+\mathbf u(t)$ and $\varTheta_t(x)=\bar\varTheta_t(x)+\mathbf u(t)$. Notice that $\vartheta_t\in D^1_0(\mathbf R^3)$ for all $t\in[0,T]$ and $\int_B\varrho_t(x)\varTheta_t(x){\rm d}x=0$ for all $t\in[0,T]$. Since $C^1_0(\mathbf R^3)^3$ is an infinite dimensional Banach space, it is always possible to find by induction $\mathbf W_1,\mathbf W_2,\ldots,\mathbf W_n$ in $C^1_0(\mathbf R^3)^3$ such that (i) both families 
$\{\mathbf W_1|_{\bar B}\cdot \mathbf e_k,\ldots,\mathbf W_n|_{\bar B}\cdot \mathbf e_k,\mathbf V_1^\dag|_{\bar B}\cdot \mathbf e_k,\ldots,\mathbf V_n^\dag|_{\bar B}\cdot \mathbf e_k,\,k=1,2,3\}$ and $\{\mathbf W_1|_{\bar B}\cdot \mathbf e_k,\ldots,\mathbf W_n|_{\bar B}\cdot \mathbf e_k,\mathbf V_1^\ddag|_{\bar B}\cdot \mathbf e_k,\ldots,\mathbf V_n^\ddag|_{\bar B}\cdot \mathbf e_k,\,k=1,2,3\}$ are free in $C^1_0(\mathbf R^3)$ and (ii) for any pair of elements $\mathbf V$, $\mathbf V'($ both picked in the same family, $\int_B\varrho^\dag\mathbf V{\rm d}x=\mathbf 0$, $\int_B\varrho^\ddag\mathbf V{\rm d}x=\mathbf 0$, $\int_B\varrho^\dag\bar\varTheta^j\times\mathbf V{\rm d}x=\mathbf 0$, $\int_B\varrho^\ddag\bar\varTheta^j\times\mathbf V{\rm d}x=\mathbf 0$  (for all $j=1,\ldots,k$), $\int_B\varrho^\dag\mathbf V\times\mathbf V'{\rm d}x=\mathbf 0$ and $\int_B\varrho^\ddag\mathbf V\times\mathbf V'{\rm d}x=\mathbf 0$. Define the function $t\in[0,1]\mapsto\mathbf V^i_t\in C^1_0(\mathbf R^3)^3$ by $\mathbf V^i_t:=(1-2t)\mathbf V_i^\dag+2t\mathbf W_i$ if $0\leq t\leq 1/2$ and $\mathbf V^i_t:=(2-2t)\mathbf W_i+(2t-1)\mathbf V^\ddag$ if $1/2<t\leq 1$ and denote $\mathcal V_t:=(\mathbf V^1_t,\ldots,\mathbf V^n_t)\in (C^1_0(\mathbf R^3)^3)^n$. Eventually, a continuous function linking $c^\dag$ to $c^\ddag$ is given by  $t\in[0,1]\mapsto c_t\in\mathcal C(n)$ with $c_t:=(\varrho^\dag,\vartheta^\dag,\mathcal V_{3t/2})$ if $0\leq t\leq 1/3$, $c_t:=(\varrho_{3t-1},\vartheta_{3t-1},\mathcal V_{1/2})$ if $1/3<t\leq 2/3$ and $c_t:=(\varrho^\ddag,\vartheta^\ddag,\mathcal V_{3t/2-1/2})$ if $2/3<t\leq 1$.
\end{proof}

We omit the proof of the following corollary, similar to that of the theorem above:

\begin{corollary}
\label{cor:1}
For any positive integer $n$, the set $\mathcal C_X(n)$ is an analytic connected embedded submanifold of $C^0(\bar B)\times C^1_0(\mathbf R^3)^3\times (C^1_0(\mathbf R^3)^3)^n\times \mathbf R^n$ of codimension $N:=3(n+2)(n+1)/2$.
\end{corollary}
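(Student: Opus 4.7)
My plan is to reduce the corollary to Theorem~\ref{theorem:1} by observing that the defining constraints of $\mathcal C_X(n)$ do not involve the extra parameter $s\in\mathbf R^n$ at all: they are exactly the self-propelled constraints captured by the analytic map $\Lambda$ built in the proof of Theorem~\ref{theorem:1}. Accordingly I would define the trivially-extended analytic map
\[
\tilde\Lambda:C^0(\bar B)\times C^1_0(\mathbf R^3)^3\times (C^1_0(\mathbf R^3)^3)^n\times\mathbf R^n\longrightarrow \mathbf R^N,\qquad \tilde\Lambda(c,s):=\Lambda(c),
\]
whose differential $\partial\tilde\Lambda(c,s)=\partial_c\Lambda(c)\circ\Pi$ (with $\Pi$ the projection onto the first three factors) is onto whenever $\partial_c\Lambda(c)$ is onto, i.e. at every point of $\mathcal C(n)\times\mathbf R^n$ by the computation already performed in Theorem~\ref{theorem:1}. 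Applying the analytic implicit function theorem in Banach spaces exactly as in that proof yields local analytic parameterizations of $\tilde\Lambda^{-1}(0)$, which is therefore an analytic embedded submanifold of codimension $N=3(n+2)(n+1)/2$.

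Next, I would account for the open condition $s\in\mathcal S(c)$. Since $D^1_0(\mathbf R^3)$ is open in $C^1_0(\mathbf R^3)^3$, the set
\[
\mathcal U:=\Bigl\{(c,s)\;:\;\vartheta+\sum_{i=1}^n s_i\mathbf V_i\in D^1_0(\mathbf R^3)\Bigr\}
\]
is open in the ambient Banach space, so $\tilde\Lambda^{-1}(0)\cap\mathcal U$ is itself an analytic embedded submanifold of codimension $N$. The set $\mathcal C_X(n)$ is then recognized inside $\tilde\Lambda^{-1}(0)\cap\mathcal U$ as the union, over $c\in\mathcal C(n)$, of the connected components of the fibers $\mathcal U\cap(\{c\}\times\mathbf R^n)$ containing $s=0$; this union is automatically open in $\tilde\Lambda^{-1}(0)\cap\mathcal U$, so it inherits the submanifold structure and the codimension.

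Finally, for connectedness, I would exploit the fiberwise connectedness built into the definition of $\mathcal S(c)$ together with path-connectivity of $\mathcal C(n)$. Given two extended configurations $(c^\dagger,s^\dagger),(c^\ddagger,s^\ddagger)\in\mathcal C_X(n)$, I first join $s^\dagger$ to $0$ by a continuous path inside the open connected set $\mathcal S(c^\dagger)\subset\mathbf R^n$, then transport $(c^\dagger,0)$ to $(c^\ddagger,0)$ along the path $t\mapsto(c_t,0)$ furnished by Theorem~\ref{theorem:1}, and finally join $0$ to $s^\ddagger$ inside $\mathcal S(c^\ddagger)$. The main (and really only) point to check is that the middle leg stays in $\mathcal C_X(n)$, i.e.\ that $0\in\mathcal S(c_t)$ for every $t$; but by construction of the path in the proof of Theorem~\ref{theorem:1} one has $\vartheta_t\in D^1_0(\mathbf R^3)$ throughout, so $s=0$ lies in the connected component of $\mathcal S(c_t)$ and the leg is admissible. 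This is the step that requires the most care, because one must verify that the deformation path constructed in Theorem~\ref{theorem:1} (including the cut-off transport by the flow of $\dot{\mathbf u}\chi$) really does preserve membership in $D^1_0(\mathbf R^3)$ for every intermediate value of $t$ — but this is already recorded in that proof, so no new estimate is needed.
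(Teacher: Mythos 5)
Your proposal is correct and follows exactly the route the paper indicates (the authors omit the proof, stating only that it is ``similar to that of the theorem above''): lift $\Lambda$ trivially to $\tilde\Lambda(c,s):=\Lambda(c)$, get the analytic submanifold structure and codimension $N$ from the same surjectivity-of-$\partial_c\Lambda$ computation, restrict to the open set where the perturbed map is a diffeomorphism, and run the three-legged path argument for connectedness. The one place you wave your hands slightly is ``this union is automatically open'': openness of $\mathcal C_X(n)$ inside $\tilde\Lambda^{-1}(0)\cap\mathcal U$ does hold, but it requires a brief compactness argument (take a path in $\mathcal S(c_0)$ from $0$ to $s_0$, tube it in $\mathcal U$, and note that for $c$ near $c_0$ and $s$ near $s_0$ the perturbed path still joins $0$ to $s$ inside the fiber over $c$), plus the observation that the conditions defining $\mathcal C(n)$ beyond $\Lambda=0$ (positivity of $\varrho$, $\vartheta\in D^1_0$, the free-family condition) are themselves open; neither is a gap, but they deserve a sentence.
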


We denote $\pi$ the projection of $\mathcal C(n)$ onto $C^0(\bar B)\times D^1_0(\mathbf R^3)$ defined by $\pi(c)=(\varrho,\vartheta)$ for all $c:=(\varrho,\vartheta,\mathcal V)\in\mathcal C(n)$. The proof of the following corollary is a straightforward consequence of arguments already used in the proof of Theorem~\ref{theorem:1}:

\begin{corollary}
\label{cor:2}
For any positive integer $n$ and for any $(\varrho,\vartheta)\in\pi(\mathcal C(n))$, the section $\pi^{-1}(\{(\varrho,\vartheta)\})$ is an embedded connected analytic submanifold of $\{\varrho\}\times\{\vartheta\}\times(C^1_0(\mathbf R^3)^3)^n$ (identified with $(C^1_0(\mathbf R^3)^3)^n$) of codimension $3n(n+3)/2$.
\end{corollary}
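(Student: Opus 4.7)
Fix $(\varrho,\vartheta)\in\pi(\mathcal{C}(n))$, set $\varTheta:={\rm Id}+\vartheta$, and work on the Banach space $E:=(C^1_0(\mathbf R^3)^3)^n$. The plan is to realize $\pi^{-1}(\{(\varrho,\vartheta)\})$ as the zero set of an analytic submersion intersected with an open set, and then invoke the Banach-space analytic implicit function theorem exactly as in the proof of Theorem~\ref{theorem:1}. Define $\Lambda':E\to\mathbf R^{3n(n+3)/2}$ by aggregating the three blocks of constraints that remain nontrivial once $(\varrho,\vartheta)$ is frozen:
\[
\int_B\varrho\,\mathbf V_i\,{\rm d}x=\mathbf 0,\quad \int_B\varrho\,\varTheta\times\mathbf V_i\,{\rm d}x=\mathbf 0,\quad \int_B\varrho\,\mathbf V_i\times\mathbf V_j\,{\rm d}x=\mathbf 0,
\]
for $1\le i\le n$ and $1\le i<j\le n$ respectively, giving $3n+3n+3n(n-1)/2=3n(n+3)/2$ scalar equations. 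Each block is affine or quadratic in $\mathcal V$, so $\Lambda'$ is analytic. Letting $U\subset E$ denote the open subset on which hypothesis~(i) of Definition~\ref{def:sc} holds, one has $\pi^{-1}(\{(\varrho,\vartheta)\})=(\Lambda')^{-1}(\mathbf 0)\cap U$.

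To verify the surjectivity of $\partial_{\mathcal V}\Lambda'(\mathcal V)$ at every point of $(\Lambda')^{-1}(\mathbf 0)\cap U$, I would mimic the computation performed in the proof of Theorem~\ref{theorem:1}. Postulating vector-valued Lagrange coefficients $\boldsymbol\nu^i$, $\boldsymbol\lambda^i$, $\boldsymbol\mu^{ij}$ for the three blocks, testing a putative null combination of the rows against arbitrary $\mathbf V_i^h\in C^1_0(\mathbf R^3)^3$, and applying the identity $\mathbf a\cdot(\mathbf b\times\mathbf c)=\mathbf b\cdot(\mathbf c\times\mathbf a)$ yields, for each $i\in\{1,\ldots,n\}$, a pointwise identity on $\bar B$:
\[
\boldsymbol\nu^i\times\varTheta(x)+\boldsymbol\lambda^i+\sum_{k\ne i}\boldsymbol\mu^{ki}\times\mathbf V_k(x)=\mathbf 0,
\]
with the antisymmetric extension $\boldsymbol\mu^{ki}:=-\boldsymbol\mu^{ik}$ and $\boldsymbol\mu^{ii}:=\mathbf 0$. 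Integrating against $\varrho$ and using the integral relations \eqref{self-propelled-cond} eliminates $\boldsymbol\lambda^i$; the residual identity is then processed along the lines of \eqref{identity:1}, using hypothesis~(i) together with the specific structure of $\varTheta$ on $\bar B$, to inductively nullify $\boldsymbol\nu^i$ and $\boldsymbol\mu^{ki}$. The analytic implicit function theorem in Banach spaces then provides local analytic charts, giving the embedded analytic submanifold structure of codimension $3n(n+3)/2$.

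Path-connectedness follows the same blueprint as in Theorem~\ref{theorem:1}. Given two points $\mathcal V^\dag,\mathcal V^\ddag$ of the fiber, I would construct by induction auxiliary fields $\mathbf W_1,\ldots,\mathbf W_n\in C^1_0(\mathbf R^3)^3$ such that both enlarged tuples $(\mathbf V^\dag,\mathbf W)$ and $(\mathbf W,\mathbf V^\ddag)$ satisfy hypothesis~(i) along with the full system of $\varrho$-weighted orthogonality relations against $\varTheta$ and within themselves; at each inductive step only finitely many linear constraints must be respected by the new $\mathbf W_i$, which is always feasible in the infinite-dimensional space $C^1_0(\mathbf R^3)^3$. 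The linear interpolations $t\mapsto(1-t)\mathcal V^\dag+t\mathbf W$ and $t\mapsto(1-t)\mathbf W+t\mathcal V^\ddag$ then both stay inside the fiber --- the affine constraints being preserved under convex combinations, and the quadratic ones expanding into four cross-terms each vanishing by the chosen orthogonality --- and concatenation gives the required continuous path. I expect the surjectivity step to be the main obstacle, since the $\boldsymbol\nu^i$-elimination cannot draw on a variation in the $\vartheta$-slot as it did in Theorem~\ref{theorem:1}; its resolution relies on the distinctive role played by $\varTheta={\rm Id}+\vartheta$ on $\bar B$, whose components carry the coordinate functions $x_p$ that are not absorbable into the finite-dimensional span $\{\mathbf V_k\cdot\mathbf e_p\}$ guaranteed to be free by hypothesis~(i).
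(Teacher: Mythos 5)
Your overall strategy — realize the fiber as the zero set of an analytic map $\Lambda'$ into $\mathbf R^{3n(n+3)/2}$ intersected with the open set $U$ where hypothesis~(i) holds, verify the submersion property, apply the analytic implicit function theorem, and then argue path-connectedness via a linear-interpolation detour through a ``generic'' intermediate tuple $\mathbf W$ — is exactly the route the paper intends, and the affine/quadratic bookkeeping, the codimension count $3n+3n+3n(n-1)/2=3n(n+3)/2$, and the path-connectedness argument are all sound.

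The problem is the surjectivity step, and specifically your stated resolution. After killing $\boldsymbol\lambda^i$ by integration against $\varrho$, the dual characterization reduces, for each $k$, to a pointwise identity on $\bar B$ of the shape
\begin{equation*}
\boldsymbol\nu^k\times\varTheta(x)\;+\;\sum_{l\neq k}\boldsymbol\mu^{lk}\times\mathbf V_l(x)\;=\;\mathbf 0 ,
\end{equation*}
and you claim the $\boldsymbol\nu^k$-terms cannot cancel because the coordinate functions $x_p$ carried by $\varTheta={\rm Id}+\vartheta$ ``are not absorbable'' into the span of $\{\mathbf V_k\cdot\mathbf e_p|_{\bar B}\}$. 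This is false in general: the restrictions $\mathbf V_l\cdot\mathbf e_q|_{\bar B}$ range over an infinite-dimensional space of $C^1(\bar B)$ functions, $\varTheta\cdot\mathbf e_q|_{\bar B}=x_q+\vartheta_q(x)$ is just another $C^1(\bar B)$ function, and nothing in Definition~\ref{def:sc} prevents $\varTheta\cdot\mathbf e_q|_{\bar B}$ from lying in that span (indeed $\int_B\varrho\,\varTheta\,{\rm d}x=\mathbf 0$ and $\int_B\varrho\,\mathbf V_l\,{\rm d}x=\mathbf 0$ are perfectly compatible). Hypothesis~(i) only grants freeness of $\{\mathbf V_l\cdot\mathbf e_q\}$; it does not extend to the family enlarged by $\varTheta$, and one can construct SC's where that larger family is dependent. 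So the argument as written does not close the surjectivity gap.

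The gap can in fact be closed, but through a genuinely different mechanism that exploits the antisymmetry of $(\boldsymbol\mu^{kl})$ and the fact that $\varTheta$ is a diffeomorphism, not any linear independence involving $\varTheta$. Dotting the $k$-th pointwise identity with the constant vector $\boldsymbol\nu^k$ kills the $\varTheta$-term and yields $\sum_{l\neq k}\mathbf V_l\cdot(\boldsymbol\nu^k\times\boldsymbol\mu^{lk})=0$ on $\bar B$; hypothesis~(i) then forces $\boldsymbol\mu^{lk}\parallel\boldsymbol\nu^k$ whenever $\boldsymbol\nu^k\neq\mathbf 0$ (and forces $\boldsymbol\mu^{lk}=\mathbf 0$ directly when $\boldsymbol\nu^k=\mathbf 0$). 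Combined with the antisymmetry $\boldsymbol\mu^{kl}=-\boldsymbol\mu^{lk}$, this shows $\boldsymbol\mu^{kl}=\mathbf 0$ across non-parallel $\boldsymbol\nu$-classes, and within a parallel class $\{\boldsymbol\nu^k=a_k\mathbf u\}$ each equation collapses to $\mathbf u\times[a_k\varTheta+\sum_{l}c_{lk}\mathbf V_l]=\mathbf 0$; comparing two such equations and invoking hypothesis~(i) again nullifies the $c_{lk}$, leaving $\boldsymbol\nu^k\times\varTheta\equiv\mathbf 0$ on $\bar B$ — impossible because $\varTheta$ is a homeomorphism onto a three-dimensional image, so $\varTheta(\bar B)$ cannot lie in a line. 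This is where the diffeomorphism property, rather than a spanning claim about coordinate functions, does the work; you should replace the ``not absorbable'' heuristic with this decoupling argument.
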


\section{Sensitivity Analysis of the Mass Matrices}
\label{sensiti:mass}
For any positive integers $k$ and $p$, we denote ${\rm M}(k,p)$ the vector space of the matrices of size $k\times p$ (or simply ${\rm M}(k)$ when $k=p$).
\begin{theorem}
\label{theorem:2}
For any positive integer $n$, the mappings $\mathbf c\in\mathcal C_X(n)\mapsto \mathbb M^r(\mathbf c)\in {\rm M}(6)$ and $\mathbf c\in\mathcal C_X(n)\mapsto \mathbb N(\mathbf c)\in {\rm M}(6,n)$ are analytic.
\end{theorem}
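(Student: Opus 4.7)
The plan is to decompose $\mathbb{M}^r(\mathbf c)=\mathbb{M}^r_b(\mathbf c)+\mathbb{M}^r_f(\mathbf c)$ and treat the body and fluid parts separately. For the body part, the mass $m=\int_B\varrho\,{\rm d}x$ is linear in $\varrho$ and the inertia tensor given by \eqref{def:Is} is a polynomial expression in the entries of $\varTheta_s={\rm Id}+\vartheta+\sum_i s_i\mathbf V_i$ integrated against $\varrho$. Since the map $\mathbf c\in\mathcal C_X(n)\mapsto\varTheta_s\in C^1_0(\mathbf R^3)^3\oplus\mathbf{Id}$ is affine (hence analytic) and $\varrho$ appears linearly, $\mathbf c\mapsto \mathbb M^r_b(\mathbf c)$ is manifestly analytic. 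The real work lies with $\mathbb M^r_f$ and $\mathbb N$, which involve integrals of gradients of the elementary potentials $\psi^i_{\mathbf c}$ and $\varphi^i_{\mathbf c}$ over the moving domain $\mathcal F_{\mathbf c}$.

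The key technical step is a pullback to a fixed reference domain. Since every $\mathbf V_k$ lies in $C^1_0(\mathbf R^3)^3$, the diffeomorphism $\varTheta_s$ equals the identity outside a ball and pulls $\mathcal F:=\mathbf R^3\setminus\bar B$ onto $\mathcal F_{\mathbf c}$. Define $\widetilde\psi^i_{\mathbf c}:=\psi^i_{\mathbf c}\circ\varTheta_s$ on $\mathcal F$. A standard change of variables transforms the Laplace problem for $\psi^i_{\mathbf c}$ on $\mathcal F_{\mathbf c}$ into an elliptic problem
\begin{equation*}
\mathrm{div}\bigl(A(\varTheta_s)\nabla\widetilde\psi^i_{\mathbf c}\bigr)=0\ \text{ in }\mathcal F,\qquad A(\varTheta_s)\nabla\widetilde\psi^i_{\mathbf c}\cdot\mathbf n= g^i(\mathbf c)\ \text{ on }\Sigma,
\end{equation*}
where $A(\varTheta_s):=J_s(\nabla\varTheta_s)^{-1}(\nabla\varTheta_s)^{-t}$ and $g^i(\mathbf c)$ encodes the pulled-back Neumann data $(\mathbf e_i\times x)\cdot\mathbf n_{\mathbf c}$ or $\mathbf e_{i-3}\cdot\mathbf n_{\mathbf c}$. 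For $\varphi^i_{\mathbf c}$, the analogous pullback produces the Neumann datum $\mathbf V_i\cdot\mathbf n$ (weighted appropriately), which depends linearly on $\mathbf c$. Because matrix inversion and determinant are real analytic on invertible matrices, and because $\nabla\varTheta_s$ remains uniformly close to the identity at infinity and invertible (as $\vartheta_s\in D^1_0(\mathbf R^3)$ by definition of $\mathcal S(c)$), the coefficient $A(\varTheta_s)$ and the data $g^i(\mathbf c)$ depend analytically on $\mathbf c$, as maps into $L^\infty(\mathcal F)$ and into the trace space on $\Sigma$ respectively.

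Next, one invokes the Lax--Milgram theorem in the weighted Sobolev space $W^1(\mathcal F)$ (citing the framework of Amrouche et al.\ already used to define $\psi^i_t$): for any coefficient matrix in a small analytic neighbourhood of $\mathrm{Id}$, the bilinear form $(u,v)\mapsto\int_\mathcal F A(\varTheta_s)\nabla u\cdot\nabla v\,{\rm d}x$ is coercive on $W^1(\mathcal F)$ (modulo constants) and the associated operator $L(\mathbf c):W^1(\mathcal F)\to W^1(\mathcal F)'$ is an isomorphism. The family $\mathbf c\mapsto L(\mathbf c)$ is analytic as a map into the Banach space of bounded linear operators, so its inverse is analytic by the Neumann series (or by the analytic implicit function theorem applied to the identity $L(\mathbf c)\widetilde\psi^i_{\mathbf c}=F^i(\mathbf c)$, where the right-hand side is the analytic map associated with $g^i(\mathbf c)$). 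Consequently $\mathbf c\mapsto\widetilde\psi^i_{\mathbf c}\in W^1(\mathcal F)$ is analytic, and likewise for $\widetilde\varphi^i_{\mathbf c}$.

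Finally, performing the change of variables in the defining integrals yields
\begin{equation*}
\mathbb M^r_f(\mathbf c)_{ij}=\varrho_f\!\int_{\mathcal F}\!A(\varTheta_s)\nabla\widetilde\psi^i_{\mathbf c}\cdot\nabla\widetilde\psi^j_{\mathbf c}\,{\rm d}x,\qquad \mathbb N(\mathbf c)_{ij}=\varrho_f\!\int_{\mathcal F}\!A(\varTheta_s)\nabla\widetilde\psi^i_{\mathbf c}\cdot\nabla\widetilde\varphi^j_{\mathbf c}\,{\rm d}x,
\end{equation*}
which are continuous polynomials in quantities already shown to depend analytically on $\mathbf c$, hence analytic. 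I expect the main obstacle to be verifying that the weighted $W^1(\mathcal F)$ setting is robust under the analytic perturbation $A(\varTheta_s)$ and gives a true analytic family of isomorphisms—this requires controlling the behaviour at infinity, which however is essentially trivial because $A(\varTheta_s)=\mathrm{Id}$ outside a large ball depending on $\mathbf c$, locally uniform on $\mathcal C_X(n)$.
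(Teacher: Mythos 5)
Your proposal follows essentially the same route as the paper: decompose $\mathbb M^r$ into body and fluid parts, dispose of the body part as a polynomial in $\varTheta_s$ that is linear in $\varrho$, and for the fluid part pull the variational Neumann problem back to the fixed reference domain $F=\mathbf R^3\setminus\bar B$ (your $A(\varTheta_s)$ is the paper's $\mathbb A_\xi$), then invoke the analytic implicit function theorem in $W^1(F)$ (equivalently, Lax--Milgram plus a Neumann-series/Riesz argument) to obtain analytic dependence of the pulled-back potentials and hence of the integrals. The one place you pass over quickly—the analyticity of the pulled-back Neumann data, in particular of $\xi\mapsto\mathbf n_\xi\circ\varXi\in C^0(\Sigma)$—is precisely the step the paper itself flags as non-trivial and delegates to an external reference, so this is an acceptable level of detail rather than a gap.
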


The method followed in this proof is inspired from \cite{Henrot:2005aa}. The result already appeared in \cite{Munnier:2008aa}, in a slightly different form though. Due to its crucial importance for our purpose, we recall here the main ideas. 

Let us begin with a preliminary lemma of which the statement requires introducing some material.
Thus, we denote $F:=\mathbf R^3\setminus \bar B$ (recall that $B$ is the unit ball, $\Sigma:=\partial B$ and $\mathbf n$ is the unit normal to $\Sigma$ directed toward the interior of $B$). For all  $\xi\in D^1_0(\mathbf R^3)$, we set $\varXi:={\rm Id}+\xi$, $\mathcal B_\xi:=\varXi(B)$, $\mathcal F_\xi:=\varXi(F)$, $\Sigma_\xi:=\varXi(\Sigma)$ and $\mathbf n_\xi$ stands for the unit normal to $\Sigma_\xi$ directed toward the interior of $\mathcal B_\xi$. We denote $\mathbf q:=(\xi,\mathcal W)$, with $\mathcal W:=(\mathbf W^1,\mathbf W^2)\subset (C^1_0(\mathbf R^3)^3)^2$, the elements of $\mathcal Q:=D^1_0(\mathbf R^3)\times(C^1_0(\mathbf R^3)^3)^2 $ and $\mathbf w^i_\xi:=\mathbf W^i(\varXi^{-1})$ ($i=1,2$). 
Finally, for every $\mathbf q\in\mathcal Q$, we define:
\begin{equation}
\label{def:M}
\varPhi(\mathbf q):=\int_{\mathcal F_\xi}\nabla\psi^1_{\mathbf q}(x)\cdot \nabla\psi^2_{\mathbf q}(x)\,{\rm d}x,
\end{equation}
where, for every $i=1,2$, the function $\psi^i_{\mathbf q}\in W^1(\mathcal F_\xi)$ (recall that the function spaces are defined in Section~\ref{SEC:diffeo}) is solution to the Laplace equation $-\Delta\psi^i_{\mathbf q}=0$ in $\mathcal F_\xi$  with Neumann boundary data $\partial_n\psi^i_{\mathbf q}=\mathbf w^i_\xi\cdot \mathbf n_\xi$ on $\Sigma_\xi$.
The solution has to be understood in the weak sense, namely:
\begin{equation}
\label{varia:1}
\int_{\mathcal F_\xi}\nabla\psi^i_{\mathbf q}(x)\cdot \nabla\varphi(x)\,{\rm d}x=\int_{\Sigma_\xi}(\mathbf w^i_\xi\cdot\mathbf n_\xi)(x)\varphi(x)\,{\rm d}\sigma,\quad\forall\,\varphi\in W^1(\mathcal F_\xi).
\end{equation}

\begin{lemma}
\label{lemma:import}
The mapping $\mathbf q\in\mathcal Q\mapsto \varPhi(\mathbf q)\in\mathbf R$ is analytic.
\end{lemma}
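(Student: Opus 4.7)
The plan is to pull everything back to the fixed reference exterior domain $F=\mathbf R^3\setminus\bar B$ via the diffeomorphism $\varXi={\rm Id}+\xi$, so that the $\xi$-dependence migrates entirely into the coefficients of an elliptic problem posed on a domain that no longer moves. Setting $\tilde\psi^i(x):=\psi^i_{\mathbf q}(\varXi(x))$ for $x\in F$, the change of variables $y=\varXi(x)$ in (\ref{varia:1}) turns the variational formulation into
$$
\int_F A(\xi)(x)\nabla\tilde\psi^i(x)\cdot\nabla\tilde\varphi(x)\,{\rm d}x=\int_\Sigma b^i(\mathbf q)(x)\,\tilde\varphi(x)\,{\rm d}\sigma,\qquad \forall\,\tilde\varphi\in W^1(F),
$$
with $A(\xi):=(\det\nabla\varXi)(\nabla\varXi)^{-1}(\nabla\varXi)^{-t}$ and $b^i(\mathbf q)$ the transported Neumann datum (expressible in terms of $\mathbf W^i$ and the cofactor matrix of $\nabla\varXi$ restricted to $\Sigma$).

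Since $\xi\mapsto\nabla\varXi$ is affine continuous from $C^1_0(\mathbf R^3)^3$ into bounded continuous matrix fields on $\mathbf R^3$, and the maps $M\mapsto M^{-1}$ and $M\mapsto\det M$ are analytic on $GL_3(\mathbf R)$, the composition rule for analytic maps in Banach spaces (see \cite{Whittlesey:1965aa}) yields analyticity of $\xi\mapsto A(\xi)$ into $L^\infty(F;{\rm M}(3))$, and similarly analyticity of $\mathbf q\mapsto b^i(\mathbf q)$ into $H^{-1/2}(\Sigma)$. Crucially, because $\xi$ has compact support, $A(\xi)$ equals ${\rm Id}$ outside a large ball, so the weighted Hardy/Poincar\'e inequality that turns $W^1(F)$ into a Hilbert space for the norm $\|\nabla\cdot\|_{L^2(F)}$ is preserved. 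A uniform ellipticity argument in a neighborhood of any given $\xi_0\in D^1_0(\mathbf R^3)$ then gives constants $\alpha,\beta>0$ such that the bilinear form $a(\xi)(u,v):=\int_F A(\xi)\nabla u\cdot\nabla v\,{\rm d}x$ satisfies $a(\xi)(u,u)\ge\alpha\|\nabla u\|_{L^2(F)}^2$ and $|a(\xi)(u,v)|\le\beta\|\nabla u\|_{L^2(F)}\|\nabla v\|_{L^2(F)}$ uniformly in that neighborhood. By Lax--Milgram, the induced operator $L(\xi):W^1(F)\to W^1(F)^*$ is an isomorphism.

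By composition, $\xi\mapsto L(\xi)$ is analytic into $\mathcal L(W^1(F),W^1(F)^*)$; since inversion is analytic on the open set of invertible operators (as a convergent Neumann series around any invertible point), $\xi\mapsto L(\xi)^{-1}$ is analytic as well. Combining this with the analyticity of the right-hand side yields analyticity of $\mathbf q\mapsto\tilde\psi^i_{\mathbf q}\in W^1(F)$. Reverting the change of variables one last time gives
$$
\varPhi(\mathbf q)=\int_F A(\xi)\nabla\tilde\psi^1_{\mathbf q}\cdot\nabla\tilde\psi^2_{\mathbf q}\,{\rm d}x,
$$
which is a composition of the analytic maps $\mathbf q\mapsto A(\xi)$, $\mathbf q\mapsto\tilde\psi^i_{\mathbf q}$ and the continuous trilinear pairing $(A,u,v)\mapsto\int_F A\nabla u\cdot\nabla v\,{\rm d}x$, hence analytic.

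The main obstacle is technical rather than conceptual: one must check that all transported objects actually live in the weighted space $W^1(F)$ and that the ellipticity constants in the Lax--Milgram step remain uniform as $\xi$ varies. This is precisely where the assumption $\xi\in D^1_0(\mathbf R^3)$ is used in an essential way, since it guarantees that $A(\xi)$ is a compactly supported perturbation of the identity matrix, making the decay behavior at infinity and hence the Hardy-type inequalities underlying the coercivity of $a(\xi)$ independent of $\xi$ in a neighborhood of any fixed $\xi_0$.
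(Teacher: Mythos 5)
Your proof is correct and follows the same core strategy as the paper: pull the variational problem back to the fixed exterior domain $F$ via $\varXi$, show the transported coefficient matrix and boundary datum depend analytically on $\xi$, deduce analyticity of the transported solution, and conclude by writing $\varPhi$ as a pulled-back integral. Two sub-steps differ in a way worth recording. Where the paper applies the analytic implicit function theorem to $\varGamma(\mathbf q,u):=\langle\mathbb A_\xi,u,\cdot\rangle-\langle b^i_{\mathbf q},\cdot\rangle$, you exploit linearity directly: the solution is $L(\xi)^{-1}b^i(\mathbf q)$ with $\xi\mapsto L(\xi)$ analytic into $\mathcal L(W^1(F),W^1(F)')$ and inversion analytic on the open set of isomorphisms via its local Neumann series. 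That is a lighter and more transparent route for a linear problem, and in fact the IFT step in the paper reduces, in this setting, to precisely that Neumann series; the IFT simply remains valid in a nonlinear generalization. More substantively, your choice to express the transported Neumann datum through the cofactor matrix of $\nabla\varXi$ restricted to $\Sigma$ (Nanson's identity, $(\mathbf n_\xi\circ\varXi)\,J^\sigma_\xi={\rm cof}(\nabla\varXi)\,\mathbf n$) makes $b^i_{\mathbf q}$ manifestly a polynomial expression in $\nabla\varXi$ contracted with $\mathbf W^i$ and the fixed normal $\mathbf n$, so its analyticity in $\xi$ is immediate. This sidesteps proving analyticity of $\xi\mapsto\mathbf n_\xi\circ\varXi$ on its own, which the paper flags as the delicate point and deflects to \cite{Munnier:2008aa} --- a genuine simplification. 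The one place to tighten: when you invoke analyticity of $M\mapsto M^{-1}$ and $M\mapsto\det M$ on $GL_3(\mathbf R)$, what you actually need is that the induced superposition operators on a Banach space of bounded continuous matrix fields (e.g.\ $E^0_0(\Omega,{\rm M}(3))$, as the paper makes explicit) are analytic; the pointwise statements are necessary but must be lifted to the function-space level, which your phrasing leaves implicit.
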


\begin{proof}
We pull back relation \eqref{varia:1} onto the domain $F$ using the diffeomorphism $\varXi$. We get:
$$
\int_{F}\nabla{\varPsi}^i_{\mathbf q}(x)\mathbb A_\xi(x)\cdot \nabla(\varphi\circ\varXi)(x)\,{\rm d}x=
\int_{\Sigma}(\mathbf W^i(x)\cdot \mathbf n_\xi(\varXi(x))\varphi(\varXi(x))J^\sigma_\xi(x)\,{\rm d}\sigma,\quad\forall\,\varphi\in W^1(\mathcal F_\xi),
$$
where $\varPsi^i_{\mathbf q}:=\psi^i_{\mathbf q}\circ\varXi$, $J_\xi:=\det(\nabla\varXi)$,
$\mathbb A_\xi:=(\nabla\varXi^t\nabla\varXi)^{-1}J_\xi$ and
$J^\sigma_\xi:=\|(\nabla\varXi^{-1})^t\mathbf n\|_{\mathbf R^3}J_\xi$ (usually referred to as the tangential Jacobian). 
In \eqref{varia:1}, if we specialize the test function to have the form $\varphi:=\phi\circ\varXi^{-1}$ with $\phi\in W^1(F)$, we obtain
$
\int_{F}{\varPsi}^i_{\mathbf q}(x)\mathbb A_\xi(x)\cdot \nabla\phi(x)\,{\rm d}x=
\int_{\Sigma}b^i_{\mathbf q}(x)\phi(x)\,{\rm d}\sigma$ for all $\phi\in W^1(F)$, 
where $b^i_{\mathbf q}:=(\mathbf W^i\cdot \mathbf n_\xi(\varXi))J^\sigma_\xi$ ($i=1,2$). We now claim that the mapping $\xi\in D^1_0(\mathbf R^3)\mapsto \mathbb A_\xi-{\rm Id} \in E^0_0(\Omega,{\rm M}(3))$ is analytic. Indeed, the mappings $\xi\in D^1_0(\mathbf R^3)\mapsto \nabla\varXi^t\nabla\varXi-{\rm Id}\in E^0_0(\Omega,{\rm M}(3))$, $A\in E^0_0(\Omega,{\rm M}(3))\mapsto ({\rm Id}+A)^{-1}-{\rm Id}\in E^0_0(\Omega,{\rm M}(3))$ and $\xi\in D^1_0(\mathbf R^3)\mapsto J_\xi-1\in C^0_0(\mathbf R^3)$  are analytic. Reasoning the same way, we can show that the mapping $\xi\in D^1_0(\mathbf R^3)\mapsto J^\sigma_\xi\in C^0(\Sigma)$ is analytic as well (notice that for all $\xi\in D^1_0(\mathbf R^3)$, the function $(\nabla\varXi^{-1})^t\mathbf n$ never vanishes on $\Sigma$). It is more complicated to prove that $\xi\in D^1_0(\mathbf R^3)\mapsto \mathbf n_\xi\circ\varXi\in C^0(\Sigma)^2$ is analytic, so we refer to \cite{Munnier:2008aa} for the details. This last result entails the analyticity of $\mathbf q\in\mathcal Q \mapsto b^i_{\mathbf q}\in C^0(\Sigma)$. Then, denoting by $W^1(F)'$ the dual space to $W^1(F)$, we consider the mapping
$\varGamma:(\mathbf q,u)\in\mathcal Q\times W^1(F)\mapsto \langle \mathbb A_\xi,u,\cdot\rangle-\langle b^i_{\mathbf q},\cdot\rangle\in W^1(F)'$, 
where
$\langle \mathbb A_\xi,u,\phi\rangle:=\int_{F}\nabla u(x)\mathbb A_\xi(x)\cdot \nabla\phi(x)\,{\rm d}x$ and 
$\langle b^i_{\mathbf q},\phi\rangle:=\int_{\Sigma}b^i_{\mathbf q}(x)\phi(x)\,{\rm d}\sigma$ ($\phi\in W^1(F)$).
We wish now to apply the implicit function theorem (analytic version in Banach spaces, as stated in \cite{Whittlesey:1965aa}) to the analytic function $\varGamma$. Observe, though, that we are only interested in the regularity result and not in the existence and uniqueness. Indeed, we already know  that for every $\mathbf q\in\mathcal Q$, there exists a unique function $\varPsi^i_{\mathbf q}\in W^1(F)$ such that $\varGamma(\mathbf q,\varPsi^i_{\mathbf q})=0$. The function $\varPsi^i_{\mathbf q}$ is equal to $\psi^i_{\mathbf q}\circ\varXi$ where $\psi^i_{\mathbf q}$ is the unique solution to the well posed variational problem \eqref{varia:1}.
The partial derivative $\partial_u \varGamma(\mathbf q,\varPsi^i_{\mathbf q})$ is defined by:
\begin{equation}
\label{riez}
\langle \partial_u \varGamma(\mathbf q,\varPsi^i_{\mathbf q}),u,\phi\rangle=\int_{F}\nabla u(x)\mathbb A_\xi(x)\cdot\nabla\phi(x)\,{\rm d}x,\quad\forall\phi\in W^1(F).
\end{equation}
For all $\xi\in D^1_0(\mathbf R^3)$, the matrix $\mathbb A_\xi$ is uniformly elliptic in $\mathbf R^3$ (there exists $\alpha_\xi>0$ such that $X^t\mathbb A_\xi(x) X>\alpha_\xi\|X\|_{\mathbf R^3}^2$ for all $X\in\mathbf R^3$ and all $x\in\mathbf R^3$). We deduce that the right hand side of \eqref{riez} can be chosen as the scalar product in $W^1(F)$ and hence that $\partial_u \varGamma(\mathbf q,\varPsi^i_{\mathbf q})$ is a continuous isomorphism from $W^1(F)$ onto its dual space according to the Riesz representation theorem. The implicit function theorem applies and asserts that the mapping $\mathbf q\in\mathcal Q\mapsto \varPsi^i_{\mathbf q}\in W^1(F)$ is analytic.

To conclude the proof, it remains only to observe that the function $\varPhi(\mathbf q)$ introduced in \eqref{def:M} can be rewritten, upon a change of variables as $\varPhi(\mathbf q)=\int_{F}\nabla\varPsi^1_{\mathbf q}(x)\mathbb A_\xi(x)\cdot\nabla\varPsi^2_{\mathbf q}(x)\,{\rm d}x$, 
which is indeed analytic as a composition of analytic functions.
\end{proof}

We can now give the proof of Theorem~\ref{theorem:2}.
\begin{proof}
Recall that the elements of the matrix $\mathbb M^r_f(\mathbf c)$ are defined in \eqref{def:coeff_mrf} and those of $\mathbb N(\mathbf c)$ in \eqref{def:coeff_n}. For any $\mathbf c:=(c,s)\in\mathcal C_X(n)$, where $c:=(\varrho,\vartheta,\mathcal V)$, we apply the lemma with $\xi:=\vartheta+\sum_{i=1}^ns_i\mathbf V_i$ and $\mathbf W^1,\mathbf W^2\in\{\mathbf e_i\times\varXi,\, \mathbf e_i,\,i=1,2,3\}$ to get that the mapping $\mathbf c\in\mathcal C_X(n)\mapsto \mathbb M^r_f(\mathbf c)\in {\rm M}(6)$ is analytic. 
To prove the analyticity of the elements of $\mathbb N(\mathbf c)$, we apply the lemma again with $\xi:=\vartheta+\sum_{i=1}^ns_i\mathbf V_i$, $\mathbf W^1\in \{ \mathbf e_i\times\varXi,\, \mathbf e_i,\,i=1,2,3\}$ and $\mathbf W^2\in\{\mathbf V_1,\ldots,\mathbf V_n\}$. Eventually, the analyticity of the elements of $\mathbb M^r_b(\mathbf c)$ is straightforward after rewriting the inertia tensor $\mathbb I(\mathbf c)$ defined in \eqref{def:Is} in the form (upon a change of variables) $\mathbb I(\mathbf c)=\int_{B}\varrho[\|\varXi\|_{\mathbf R^3}^2{\rm Id}-\varXi\otimes\varXi]\,{\rm d}x$, 
still with $\varXi:={\rm Id}+\xi$ and $\xi:=\vartheta+\sum_{i=1}^ns_i\mathbf V_i$.
\end{proof}

\section{Control}

\label{SEC:control_problem}
\subsection{Controllable SC}
Let us fix $c\in{\mathcal C}(n)$ (for some positive integer $n$) and recall that $\mathcal S(c)$ is the connected open subspace of $\mathbf R^n$ such that $(c,s)\in\mathcal C_X(n)$.  Introducing $(\mathbf f_1,\ldots,\mathbf f_n)$ an ordered orthonormal basis of $\mathbf R^n$, we can seek the function $t\in[0,T]\mapsto s(t)\in\mathcal S(c)$ as the solution of the ODE $\dot s(t)=\sum_{i=1}^n\lambda_i(t)\mathbf f_i$ where the functions $\lambda_i:t\in[0,T]\mapsto \lambda_i(t)\in\mathbf R$ are the new controls, and rewrite once more the dynamics \eqref{dynamics:1} as:
\begin{equation}
\label{dynamics:2}
\begin{pmatrix}\boldsymbol\Omega\\
\mathbf v\\
\dot s\end{pmatrix}=\sum_{i=1}^n\lambda_i(t)
\begin{pmatrix}-\mathbb M^r(c,s)^{-1}\mathbb N(c,s)\mathbf f_i\\
\mathbf f_i\end{pmatrix}, \qquad (t\geq 0).
\end{equation}
It is worth remarking that in this form, $s$ is no more the control but a variable which is meant to be controlled and $c\in\mathcal C(n)$ is a parameter of the dynamics. 
Considering \eqref{dynamics:2}, we are quite naturally led to introduce, for all $\mathbf c\in\mathcal C_X(n)$, the vector fields
$\mathbf X_i(\mathbf c):=-\mathbb M^r(\mathbf c)^{-1}\mathbb N(\mathbf c)\mathbf f_i\in\mathbf R^6$, $\mathbf Y_i(\mathbf c):=(\hat{\mathbf X}^1_i(\mathbf c)
,
{\mathbf X}^2_i(\mathbf c),
\mathbf f_i)^t\in T_{\rm Id}{\rm SO}(3)\times\mathbf R^3\times \mathbf R^n$ (we have used here the notation $\mathbf X_i:=(\mathbf X_i^1,\mathbf X_i^2)^t\in\mathbf R^3\times\mathbf R^3$) and 
$\mathbf Z_c^i(R,s):=\mathcal R_R\mathbf Y_i(\mathbf c)\in T_R{\rm SO}(3)\times\mathbf R^3\times \mathbf R^n$
where $\mathcal R_R:={\rm diag}(R,R,{\rm Id})\in{\rm SO}(6+n)$ is a bloc diagonal matrix.
The dynamics \eqref{dynamics:2} and the ODE \eqref{complement} can be gathered into a unique differential system:
\begin{equation}
\label{dynamics:3}
\frac{d}{dt}\begin{pmatrix}R\\
\mathbf r\\
s\end{pmatrix}=\sum_{i=1}^n\lambda_i(t)\mathbf Z^i_c(R,s),\qquad(t\geq 0).
\end{equation}
For every $i=1,\ldots,n$, the function $(R,\mathbf r,s)\in{\rm SO}(3)\times\mathbf R^3\times\mathcal S(c)\mapsto \mathbf Z^i_c(R,s)\in T_R{\rm SO}(3)\times\mathbf R^3\times \mathbf R^n$ can be seen as an analytic vector field (constant in $\mathbf r$) on the analytic connected manifold $\mathcal M(c):={\rm SO}(3)\times\mathbf R^3\times\mathcal S(c)$. We denote $\zeta$ any element $(R,\mathbf r,s)\in\mathcal M(c)$ and we define $\mathcal Z(c)$ as the family of vector fields $(\mathbf Z^i_c)_{1\leq i\leq n}$ on $\mathcal M(c)$.  
\begin{lemma}Let $c$ be a SC fixed in $\mathcal C(n)$ ($n$ a positive integer). 
If there exists $\zeta\in\mathcal M(c)$ such that ${\rm dim}\,{\rm Lie}_{\zeta}\mathcal Z(c)=6+n$, then the orbit of $\mathcal Z(c)$ through any $\zeta\in\mathcal M(c)$ is equal to the whole manifold $\mathcal M(c)$.
\end{lemma}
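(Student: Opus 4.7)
The plan is to invoke the orbit theorem in its real-analytic form (Nagano, within Sussmann's general framework): for a family $\mathcal{F}$ of real-analytic vector fields on an analytic manifold $\mathcal{M}$, every orbit is an immersed analytic submanifold whose tangent space at each point coincides with the evaluation at that point of the Lie algebra generated by $\mathcal{F}$. Since the fields $\mathbf{Z}^i_c$ are real-analytic in $(R,\mathbf{r},s)\in\mathcal{M}(c)$ (analyticity of $\mathbf{c}\mapsto \mathbb{M}^r(\mathbf{c})^{-1}\mathbb{N}(\mathbf{c})$ follows from Theorem~\ref{theorem:2} together with the analyticity of matrix inversion), this reduces the lemma to proving that $\dim \mathrm{Lie}_\eta \mathcal{Z}(c)=6+n$ at \emph{every} $\eta \in \mathcal{M}(c)$: each orbit will then be open in the connected manifold $\mathcal{M}(c)=\mathrm{SO}(3)\times\mathbf{R}^3\times \mathcal{S}(c)$, and a partition of a connected space into open sets collapses to a single piece.

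I would propagate the full-rank condition from the given point $\zeta_*$ to all of $\mathcal{M}(c)$ by exploiting two structural features of the family. First, a left-invariance: each $\mathbf{Z}^i_c$ is invariant under the action of $G:=\mathrm{SO}(3)\times\mathbf{R}^3$ on $\mathcal{M}(c)$ defined by $(R_0,\mathbf{r}_0)\cdot(R,\mathbf{r},s):=(R_0 R,\, R_0\mathbf{r}+\mathbf{r}_0,\, s)$; indeed $\mathbf{Z}^i_c=\mathcal{R}_R\mathbf{Y}_i(\mathbf{c})$ does not depend on $\mathbf{r}$ and its $(R,\mathbf{r})$-components are $R$ times a quantity depending only on $(c,s)$, so one checks that the pushforward by the $G$-action maps $\mathbf{Z}^i_c$ to itself. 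By naturality of the Lie bracket, the whole algebra $\mathrm{Lie}\,\mathcal{Z}(c)$ is $G$-invariant, and hence the map $\eta\mapsto \dim \mathrm{Lie}_\eta\mathcal{Z}(c)$ depends only on the $s$-coordinate. Second, the $s$-component of $\mathbf{Z}^i_c$ is the constant vector $\mathbf{f}_i$, and $(\mathbf{f}_1,\ldots,\mathbf{f}_n)$ is a basis of $\mathbf{R}^n$: given any absolutely continuous path $s:[0,T]\to \mathcal{S}(c)$, the controls $\lambda_i(t):=\dot s(t)\cdot \mathbf{f}_i$ realise it as the $s$-component of a solution of \eqref{dynamics:3}. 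Since $\mathcal{S}(c)$ is connected by definition, the orbit through any $\eta \in \mathcal{M}(c)$ projects \emph{onto} the whole of $\mathcal{S}(c)$.

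Combining the two steps, start from $\zeta_*$ with $\dim \mathrm{Lie}_{\zeta_*}\mathcal{Z}(c)=6+n$: Nagano's theorem gives $\dim \mathrm{Lie}_\eta\mathcal{Z}(c)=6+n$ for every $\eta \in \mathcal{O}(\zeta_*)$; the projection step produces, for each $s\in\mathcal{S}(c)$, a point $(R_s,\mathbf{r}_s,s)\in\mathcal{O}(\zeta_*)$; and left-invariance upgrades the equality to every point of the slice $\mathrm{SO}(3)\times\mathbf{R}^3\times\{s\}$. Consequently $\dim \mathrm{Lie}_\eta\mathcal{Z}(c)=6+n$ everywhere on $\mathcal{M}(c)$, every orbit is open in $\mathcal{M}(c)$, and connectedness of $\mathcal{M}(c)$ (product of the connected manifolds $\mathrm{SO}(3)$, $\mathbf{R}^3$, and $\mathcal{S}(c)$) forces a single orbit, equal to $\mathcal{M}(c)$. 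The delicate ingredient is the joint use of the analytic category — through Nagano's identification of the orbit's tangent space with the Lie algebra — and the $G$-equivariance used to spread the full-rank condition off the single orbit $\mathcal{O}(\zeta_*)$; neither of the two alone would be sufficient, since in the purely smooth case the inclusion $\mathrm{Lie}_\eta\subset T_\eta\mathcal{O}(\eta)$ can be strict, and without equivariance the open set of full-rank points might fail to meet some slice $\{s\}\times(\cdot)$.
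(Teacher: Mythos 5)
Your proof is correct and follows essentially the same route as the paper: both invoke the orbit theorem (you cite the analytic/Nagano form, the paper cites it together with Rashevsky--Chow), both observe that the dimension of the Lie algebra is constant along the $\mathrm{SO}(3)\times\mathbf{R}^3$ slices, and both use the fact that the $s$-components $\mathbf{f}_i$ span $\mathbf{R}^n$ so that every orbit projects onto all of $\mathcal{S}(c)$. Your phrasing of the slice-constancy via $G$-equivariance of the family $\mathcal{Z}(c)$ is a cleaner packaging of what the paper obtains by directly computing that brackets of all orders have the form $\mathcal{R}_R$ times a quantity depending only on $(c,s)$, but it is the same observation, not a genuinely different argument.
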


\begin{proof}
Rashevsky Chow Theorem (see \cite{Agrachev:2004aa}) applies: If ${\rm Lie}_{\zeta}\mathcal Z(c)=T_{\zeta}\mathcal M(c)$ for all $\zeta\in\mathcal M(c)$ (or more precisely, for all $(R,s)\in{\rm SO}(3)\times\mathcal S(c)$ since $\mathbf Z^i_c$ does not depend on $\mathbf r$) then the orbit of $\mathcal Z(c)$ through any point of $\mathcal M(c)$ is equal to the whole manifold. 
Let us compute the Lie bracket $[\mathbf Z^i_c(R,s),\, \mathbf Z^j_c(R,s)]$ for $1\leq i,j\leq n$ and $(R,s)\in{\rm SO}(3)\times\mathcal S(c)$. We get:
\begin{equation}
\label{identity:lie_brackets}
[\mathbf Z^i_c(R,s),\, \mathbf Z^j_c(R,s)]=\mathcal R_R\begin{pmatrix}
\widehat{(\mathbf X_i^1\times\mathbf X_j^1)}(\mathbf c)\\
({\mathbf X}^1_i\times\mathbf X_j^2-{\mathbf X}^1_j\times\mathbf X_i^2)(\mathbf c)\\
\mathbf 0
\end{pmatrix}
+\mathcal R_R\begin{pmatrix}
\widehat{(\partial_{s_i}{\mathbf X}^1_j-\partial_{s_j}{\mathbf X}_i^1)}(\mathbf c)\\
(\partial_{s_i}{\mathbf X}^2_j-\partial_{s_j}{\mathbf X}_i^2)(\mathbf c)\\
\mathbf 0
\end{pmatrix}.
\end{equation}
By induction, we can similarly prove that the Lie brackets of any order at any point $\zeta\in\mathcal M(c)$ have the same general form, namely the matrix $\mathcal R_R$ multiplied by an element of $T_{({\rm Id},\mathbf 0,s)}\mathcal M(c)$. We deduce that the dimension of the Lie algebra at any point of $\mathcal M(c)$ depends only on $s$. According to the Orbit Theorem (see \cite{Agrachev:2004aa}), the dimension of the Lie algebra is constant along any orbit. But according to the particular form of the vector fields $\mathbf Z_c^i$ (whose last $n$ components form a basis of $\mathbf R^n$), the projection of any orbit on $\mathcal S(c)$ turns out to be the whole set $\mathcal S(c)$ (or, in other words, for any $s\in\mathcal S(c)$ and for any orbit, there is a point of the orbit for which the last component is $s$). Assume now that ${\rm dim}\,{\rm Lie}_{\zeta^\ast}\mathcal Z(c)=6+n$ at some particular point $\zeta^\ast:=(R^\ast,\mathbf r^\ast,s^\ast)\in\mathcal M(c)$. Then, according to the Orbit Theorem, for any $s\in\mathcal S(c)$, there exists at least one point $(R_s,\mathbf r_s,s)\in\mathcal M(c)$ such that  ${\rm dim}\,{\rm Lie}_{(R_s,\mathbf r_s,s)}\mathcal Z(c)=6+n$. But since the dimension of the Lie algebra does not depend on the variables $R$ and $\mathbf r$, we conclude that ${\rm dim}\,{\rm Lie}_{\zeta}\mathcal Z(c)=6+n$ for all $\zeta\in\mathcal M(c)$.
\end{proof}

\begin{definition}
\label{def:cont:SC}
We say that $c$, a SC in $C(n)$ (for some integer $n$) is controllable if there exists $\zeta\in\mathcal M(c)$ such that ${\rm dim}\,{\rm Lie}_{\zeta}\mathcal Z(c)=6+n$.
\end{definition}

It is obvious that for a SC to be controllable, the integer $n$ has to be larger or equal to 2. 
The following result is quite classical (a proof can be found in \cite{Chambrion:2010aa}):
\begin{proposition}
\label{prop:prop12}
Let $c\in C(n)$ (for some integer $n$) be controllable (with the usual notation $c:=(\varrho,\vartheta,\mathcal V)$, $\mathcal V:=(\mathbf V_1,\ldots,\mathbf V_n)$ and $\vartheta_s:=\vartheta+\sum_{i=1}^ns_i\mathbf V_i$ for every $s\in\mathcal S(c)$). Then for any given continuous function $t\in[0,T]\mapsto (\bar R(t),\bar{\mathbf r}(t),\bar s(t))\in {\rm SO}(3)\times\mathbf R^3\times \mathcal S(c)$ and for any $\varepsilon>0$, there exist $n$ $C^1$ functions $\lambda_i:[0,T]\to\mathbf R$ ($i=1,\ldots,n$) such that:
\begin{enumerate}
\item $\sup_{t\in[0,T]}\Big(\| \bar R(t)-R(t)\|_{{\rm M}(3)}+\|\bar{\mathbf r}(t)-\mathbf r(t)\|_{\mathbf R^3}+\|\vartheta_{\bar s(t)}-\vartheta_{s(t)}\|_{C^1_0(\mathbf R^3)^3}\Big)<\varepsilon;$
\item $R(T)=\bar R(T)$, $\mathbf r(T)=\bar{\mathbf r}(T)$ and $s(T)=\bar s(T)$; 
\end{enumerate}
where $t\in[0,T]\mapsto(R(t),\mathbf r(t),s(t))\in\mathcal M(c)$ is the unique solution to the ODE \eqref{dynamics:3} with Cauchy data $R(0)=\bar R(0)\in{\rm SO}(3)$, $\mathbf r(0)=\bar{\mathbf r}(0)\in\mathbf R^3$, $s(0)=\bar s(0)\in\mathcal S(c)$.
\end{proposition}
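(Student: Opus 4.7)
The plan is to combine the full-rank Lie bracket hypothesis built into the definition of controllability with the classical Chow--Rashevsky theory for driftless control-affine systems, and to finish by regularizing the controls.

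The first step is to upgrade controllability (which is merely a rank condition at one point) to \emph{small-time local controllability} at every point of $\mathcal M(c)$. Because each $\lambda_i$ in \eqref{dynamics:3} ranges over all of $\mathbf R$, the family $\mathcal Z(c)$ is driftless and symmetric. By hypothesis and by the lemma preceding this proposition, $\dim \mathrm{Lie}_\zeta \mathcal Z(c) = 6+n$ for every $\zeta \in \mathcal M(c)$. Standard results on bracket-generating driftless systems (Chow--Rashevsky together with the orbit theorem of Sussmann) then yield that for every $\zeta\in\mathcal M(c)$ and every $\tau>0$, the set of points reachable from $\zeta$ in time $\le \tau$ using piecewise-constant controls is a neighborhood of $\zeta$.

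The second step is a discretization-concatenation argument. Fix $\varepsilon>0$. Choose a subdivision $0 = t_0 < t_1 < \cdots < t_N = T$ fine enough that the oscillation of $(\bar R,\bar{\mathbf r},\bar s)$ on each $[t_k,t_{k+1}]$ is less than $\varepsilon/3$ and that the $\varepsilon$-tube around $\bar s([0,T])$ is contained in $\mathcal S(c)$. On each subinterval, small-time local controllability allows us to steer from the state reached at time $t_k$ to a point within $\varepsilon/3$ of $(\bar R(t_{k+1}),\bar{\mathbf r}(t_{k+1}),\bar s(t_{k+1}))$. Concatenating produces a piecewise-constant control whose trajectory stays within $\varepsilon$ of the target uniformly.

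The third step regularizes the control and enforces the exact terminal condition. To replace the piecewise-constant control by a $C^1$ one, mollify by convolution with a smooth bump supported in tiny neighborhoods of the breakpoints; continuity of the flow with respect to the control (in the spirit of Proposition~\ref{existence}) keeps the perturbed trajectory $\varepsilon$-close to the target. To enforce $R(T)=\bar R(T)$, $\mathbf r(T)=\bar{\mathbf r}(T)$, $s(T)=\bar s(T)$, reserve a short terminal window $[T-\delta,T]$: the end-point map that sends a $C^1$ control supported on this window to the resulting state at time $T$ has open image around the flow at $T$ (again by the full-rank Lie-bracket condition at the relevant point), so if the preceding approximate trajectory is arranged to land close enough to $(\bar R(T-\delta),\bar{\mathbf r}(T-\delta),\bar s(T-\delta))$, a final adjustment on $[T-\delta,T]$ will hit the target exactly at $T$.

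The main obstacle is juggling the three constraints simultaneously --- $C^1$ regularity of $\lambda$, uniform proximity of the whole trajectory to the reference path, and \emph{exact} terminal coincidence. Each ingredient is classical for driftless bracket-generating systems, but combining them requires care. The scheme of the proof is the one already used in the 2D version \cite{Chambrion:2010aa}; it carries over verbatim since it relies only on the driftless control-affine structure of \eqref{dynamics:3} and on the full-rank bracket condition, both of which are available here by hypothesis.
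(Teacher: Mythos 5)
Your proposal is correct and matches the approach the paper has in mind: the paper does not give an in-text proof of Proposition~\ref{prop:prop12} but explicitly delegates it to \cite{Chambrion:2010aa}, and your reconstruction (Chow--Rashevsky plus the Sussmann orbit theorem for full-rank bracket generation, discretization and concatenation along the reference path, then mollification and a terminal adjustment via an open endpoint map) is precisely the scheme used there. The one place that deserves a touch more care is the claim that the concatenated trajectory ``stays within $\varepsilon$ of the target uniformly'': this is not an automatic consequence of small-time local controllability alone, but follows from the orbit theorem's identification of the orbit topology with the manifold topology, which guarantees that nearby points can be joined by trajectories confined to an arbitrarily small neighborhood; you invoke the orbit theorem in step one but should flag that it is this refinement that carries the uniform-closeness claim.
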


Let us mention some other quite elementary properties that will be used later on:

\begin{proposition} 
\label{properties_of_SC}
\begin{enumerate}
\item If $c:=(\varrho,\vartheta,\mathcal V)\in\mathcal C(n)$ ($n\geq 2)$ is a controllable SC with $\mathcal V:=(\mathbf V_1,\ldots,\mathbf V_n)\in(C^1_0(\mathbf R^3)^3)^n$ then any $c^+:=(\varrho,\vartheta,\mathcal V^+)\in\mathcal C(n+1)$  such that $\mathcal V^+:=(\mathbf V_1,\ldots,\mathbf V_n,\mathbf V_{n+1})\in (C^1_0(\mathbf R^3)^3) ^{n+1}$ (for some $\mathbf V_{n+1}\in C^m_0(\mathbf R^3)^3$) is a controllable SC as well.
\item If $c:=(\varrho,\vartheta,\mathcal V)\in\mathcal C(n)$ ($n\geq 2)$ is a controllable SC, then for any $\vartheta^\ast\in\{\vartheta+\sum_{i=1}^ns_i\mathbf V_i,\,s\in\mathcal S(c)\}$ the element  $c^\ast:=(\varrho,\vartheta^\ast,\mathcal V)$ belongs to  $\mathcal C(n)$ and is a controllable SC as well.
\item If $c:=(\varrho,\vartheta,\mathcal V)\in\mathcal C(n)$ ($n\geq 2)$ is a controllable SC, then all of the controllable SC  $c^\ast:=(\varrho,\vartheta,\mathcal V^\ast)$ in $\mathcal C(n)$ ($\mathcal V^\ast\in(C^1_0(\mathbf R^3)^3)^n$) form an open dense subset of the section $\pi^{-1}(\{(\varrho,\vartheta)\})$ (for the induced topology).
\item If there exists a SC in $\mathcal C(n)$ for some $n\geq 2$ then, for any $k\geq n$, all of the controllable SC in $\mathcal C(k)$ form an open dense subset of $\mathcal C(k)$ (for the induced topology).
\end{enumerate}
\end{proposition}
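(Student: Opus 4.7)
The plan is to handle the four items in order, reducing the last two to the first two via the analyticity of the mass matrices (Theorem~\ref{theorem:2}) and the connectedness statements of Theorem~\ref{theorem:1} and Corollary~\ref{cor:2}.

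For item~1, fix a point \(\zeta_0=(R,\mathbf r,s,0)\) in the slice \(\{s_{n+1}=0\}\subset\mathcal M(c^+)\). Along this slice the swimmer's shape associated with \(\mathbf c^+=(c^+,(s,0))\) coincides with that of \(\mathbf c=(c,s)\), so the mass matrices agree and the first \(n\) vector fields satisfy \(\mathbf Z^i_{c^+}(\zeta_0)=(\mathbf Z^i_c(R,\mathbf r,s),0)\). Since the \(s_{n+1}\)-component of each \(\mathbf Z^i_{c^+}\) (\(i\le n\)) is identically zero along the slice, the slice is invariant under those fields and derivatives \(\partial_{s_{n+1}}\) never contribute to iterated brackets \([\mathbf Z^i_{c^+},\mathbf Z^j_{c^+}]\) evaluated at \(\zeta_0\); these inherit the same padded structure, with the same span as the corresponding brackets of \(\mathcal Z(c)\) at \((R,\mathbf r,s)\). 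Picking \(\zeta_0\) so that \(\dim\mathrm{Lie}_{(R,\mathbf r,s)}\mathcal Z(c)=6+n\) produces a \((6+n)\)-dimensional subspace of \(T_{\zeta_0}\mathcal M(c^+)\) tangent to the slice, which the last field \(\mathbf Z^{n+1}_{c^+}\) complements by one transverse direction because its \(s_{n+1}\)-component equals \(1\). For item~2, the translation \(\sigma=s-s^0\) is a real-analytic diffeomorphism carrying \(\mathcal S(c^\ast)\) onto an open subset of \(\mathcal S(c)\); it conjugates \(\mathbf Z^i_{c^\ast}\) to \(\mathbf Z^i_c\) since the shapes \(\vartheta^\ast+\sum\sigma_i\mathbf V_i\) and \(\vartheta+\sum s_i\mathbf V_i\) match under the correspondence. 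A short direct check, exploiting the vanishing of each integral \(\int_B\varrho\,\mathbf V_i\,\mathrm dx\) and \(\int_B\varrho\,\mathbf V_i\times\mathbf V_j\,\mathrm dx\), shows that \(c^\ast\in\mathcal C(n)\). The Lie algebras at corresponding points therefore have the same dimension and controllability is preserved.

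For item~3, Theorem~\ref{theorem:2} guarantees that at a fixed base point \(\zeta_0=({\rm Id},\mathbf 0,0)\) the map \(c^\ast\mapsto\mathbf Z^i_{c^\ast}(\zeta_0)\) is real-analytic on \(\pi^{-1}(\{(\varrho,\vartheta)\})\), and so is every iterated Lie bracket evaluated there. Since \(c\) is controllable, some \((6+n)\)-tuple of iterated brackets has nonzero determinant \(D\) at \(c\). Corollary~\ref{cor:2} tells us the section is a connected analytic embedded submanifold, so by the identity principle for real-analytic functions the zero set \(\{D=0\}\) is a proper closed nowhere-dense subset; on its complement the chosen brackets remain linearly independent, so the complement is an open dense subset of controllable SCs.

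For item~4, iterate item~1 to lift a controllable \(c\in\mathcal C(n)\) to a controllable \(c^+\in\mathcal C(k)\) for every \(k\ge n\); the existence of the required additional basic movements \(\mathbf V_{n+1},\dots,\mathbf V_k\) satisfying Definition~\ref{def:sc} follows from the inductive construction already used in the proof of Theorem~\ref{theorem:1}. One then repeats the argument of item~3, this time on the whole connected analytic manifold \(\mathcal C(k)\) (Theorem~\ref{theorem:1}): the \((6+k)\)-determinant of iterated brackets witnessing controllability of \(c^+\) is analytic on \(\mathcal C(k)\) and not identically zero, hence its zero set is nowhere-dense and its complement is an open dense set of controllable SCs. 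The step requiring the most care is item~1: one must verify that no dimension is lost along the slice \(\{s_{n+1}=0\}\) when comparing iterated brackets of \(\mathbf Z^1_{c^+},\dots,\mathbf Z^n_{c^+}\) with those of \(\mathcal Z(c)\), which hinges on the slice being invariant and on the identity of the mass matrices along it.
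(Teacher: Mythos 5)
Correct, and essentially the same route as the paper. The paper dismisses items 1--2 as obvious and proves items 3--4 exactly as you do: fix a base point, view the determinant of a $(6+n)$-tuple of iterated brackets as an analytic function of the swimmer configuration (analyticity from Theorem~\ref{theorem:2}, connectedness from Corollary~\ref{cor:2} for the section in item 3 and from Theorem~\ref{theorem:1} for all of $\mathcal C(k)$ in item 4), and conclude that the zero set is closed with empty interior. Your extra work on items 1--2 (slice invariance of $\{s_{n+1}=0\}$ under $\mathbf Z^1_{c^+},\dots,\mathbf Z^n_{c^+}$ together with the transverse $s_{n+1}$-component of $\mathbf Z^{n+1}_{c^+}$; translation $s\mapsto s-s^0$ conjugating $\mathcal Z(c^\ast)$ to $\mathcal Z(c)$) supplies the details the paper leaves to the reader, and both arguments are sound.
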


\begin{proof}
The two first assertions are obvious so let us address directly the third point. Denote $\mathcal E_k$ ($k$ positive integer) the set of all of the vectors fields on $\mathcal M(c)$ obtained as Lie brackets of order lower or equal to $k$ from elements of $\mathcal Z(c)$. Then, consider the determinants of all of the different families of $6+n$ elements of $\mathcal E_k$ as analytic functions in the variable $\mathcal V$ (the other variables $\varrho$, $\vartheta$ and $s=0$ being fixed). Since $c$ is controllable, there exist at least one $k$ and one family of $6+n$ elements in $\mathcal E_k$ whose determinant is nonzero. According to Corollary~\ref{cor:2} and basic properties of analytic functions (see \cite{Whittlesey:1965aa}), the determinant may vanish only in a closed subset with empty interior of the section $\pi^{-1}(\{(\varrho,\vartheta)\})$ (for the induced topology). The proof of the last point is similar.
\end{proof}
\subsection{Building a controllable SC}
In this subsection, we are interested in computing the Lie brackets of first order $[\mathbf Z_c^i(R,s),\,\mathbf Z_c^j(R,s)]$ at $(R,s)=({\rm Id},0)$, for a particular SC. 
\subsubsection*{General computations}
Starting from identity \eqref{identity:lie_brackets} and focusing on the second term in the right hand side, we have, for all $c\in\mathcal C(n)$ ($n\geq 2$) , all $s\in\mathcal S(c)$ and all $i,j=1,\ldots,n$:
\begin{multline}
\label{lie_brackets}
\partial_{s_i}\mathbf X_j(\mathbf c)-\partial_{s_j}\mathbf X_i(\mathbf c)=
\mathbb M^r(\mathbf c)^{-1}\Big[(\partial_{s_j}\mathbb M^r(\mathbf c)\mathbf X_i(\mathbf c)-\partial_{s_i}\mathbb M^r(\mathbf c)\mathbf X_j(\mathbf c))\\
+(\partial_{s_j}\mathbb N(\mathbf c)\mathbf f_i-\partial_{s_i}\mathbb N(\mathbf c)\mathbf f_j)\Big].
\end{multline}From the decomposition $\mathbb M^r(\mathbf c):=\mathbb M^r_b(\mathbf c)+\mathbb M^r_f(\mathbf c)$, we deduce that:
\begin{multline}
\label{decomp}
\partial_{s_j}\mathbb M^r(\mathbf c)\mathbf X_i(\mathbf c)-\partial_{s_i}\mathbb M^r(\mathbf c)\mathbf X_j(\mathbf c)=\\
(\partial_{s_j}\mathbb M^r_b(\mathbf c)\mathbf X_i(\mathbf c)-\partial_{s_i}\mathbb M^r_b(\mathbf c)\mathbf X_j(\mathbf c))+(\partial_{s_j}\mathbb M^r_f(\mathbf c)\mathbf X_i(\mathbf c)-\partial_{s_i}\mathbb M^r_f(\mathbf c)\mathbf X_j(\mathbf c)).
\end{multline}
We can easily compute the first term in the right hand side when $s=0$. Thus, for all $i,j=1,\ldots,n$, we have:
\begin{equation}
\label{rigid_lie_brackets}
[\partial_{s_j}\mathbb M^r_b(\mathbf c)\mathbf X_i(\mathbf c)-\partial_{s_i}\mathbb M^r_b(\mathbf c)\mathbf X_j(\mathbf c)]_{s=0}=\eta_j
\begin{bmatrix}{\rm Id}&0\\
0&0\end{bmatrix}\mathbf X_i(\mathbf c)|_{s=0}-\eta_i
\begin{bmatrix}{\rm Id}&0\\
0&0\end{bmatrix}\mathbf X_j(\mathbf c)|_{s=0},
\end{equation}
where $\eta_i:=2\int_{B}\varrho(x)\, \mathbf V_i(x)\cdot \varTheta(x)\,{\rm d}x$. 

\subsubsection*{Rigid shell's deformation} We consider now shape changes that reduce to rigid displacements on the swimmer's boundary $\Sigma$ (but obviously not inside the body for the self-propelled constraints not to be violated). The idea of using such deformations stemmed from the reading of the article \cite{Kozlov:2003aa}. However, rigid deformations of the shell can not be considered within the modeling described in Section~\ref{section:SC} (because rigid deformations do not fit with the general form of the diffeomorphisms described in Section~\ref{section:SC}). Let us explain how to deal with this difficulty.

Let $c:=(\varrho,\vartheta,\mathcal V)\in\mathcal C(6)$ be given such that $\mathcal V:=(\mathbf V_1,\ldots,\mathbf V_6)$ with $\mathbf V_i|_{\Sigma}(x):=\mathbf e_i\times(x+\vartheta(x))$ ($i=1,2,3$) and $\mathbf V_i|_\Sigma(x):=\mathbf e_{i-3}$ ($i=4,5,6$) (Proposition~\ref{rectif_vector_fields} in the appendix guarantees the existence of such vector fields satisfying furthermore $\eta_i=0$ for all $i=1,\ldots,6$). Thus, the shape changes we are considering read $\varTheta_s:={\rm Id}+\vartheta+\sum_{i=1}^6s_i\mathbf V_i$ ($s\in\mathcal S(c)$). Let us define also $\varXi_s:=R_s({\rm Id}+\vartheta)+\boldsymbol\tau_s$ where $R_s:=\exp({s_1\hat{\mathbf e}_1})\exp(s_2{\hat{\mathbf e}_2})\exp(s_3{\hat{\mathbf e}_3})\in{\rm SO}(3)$ and $\boldsymbol\tau_s:=\sum_{i=1}^3s_{i+3}\mathbf e_i\in\mathbf R^3$.
Thus, $\varTheta_s$ is a diffeomorphism which can be achieved within our modeling while $\varXi_s$ is a {\it true} rigid deformation. 

In order to determine the expressions of the terms $\partial_{s_i}\mathbb M^r_f(\mathbf c)|_{s=0}$ ($i=1,\ldots,6$) arising in the computation of the Lie brackets, we have  to compare (using the notation of Lemma~\ref{lemma:import}) $\partial_{s_i}\varPhi(\mathbf q^1_s)|_{s=0}$ and $\partial_{s_i}\varPhi(\mathbf q^2_s)|_{s=0}$ ($i=1,\ldots,6$) where:
\begin{itemize}
\item $\mathbf q_s^1:=(\xi^1_s,\mathcal W^1_s)$ with $\xi^1_s:=\varXi_s-{\rm Id}$, $\mathcal W^1_s:=(\mathbf W^{1,1}_s,\mathbf W^{1,2}_s)$ and $\mathbf W^{1,1}_s,\,\mathbf W^{1,2}_s\in\{\mathbf e_i\times\varXi_s,\,\mathbf e_i,\,i=1,2,3\}$ (settings corresponding to a true rigid deformation of the shell);
\item $\mathbf q^2_s:=(\xi^2_s,\mathcal W^2_s)$ with 
$\xi^2_s:=\varTheta_s-{\rm Id}=\vartheta+\sum_{i=1}^ns_i\mathbf V_i$, $\mathcal W^2_s:=(\mathbf W^{2,1}_s,\mathbf W^{2,2}_s)$ and $\mathbf W^{2,1}_s,\mathbf W^{2,2}_s\in\{\mathbf e_i\times\varTheta_s,\,\mathbf e_i,\,i=1,2,3\}$ (settings allowed in our modeling).
\end{itemize}
Remark that $\partial_{s_i}\varTheta_s|_{s=0}=\partial_{s_i}\varXi_s|_{s=0}$ on $\Sigma$ for all $i=1,\ldots,6$, so the deformations are tangent at $s=0$. It entails that $\partial_{s_i}\mathbf q^1_s|_{s=0}=\partial_{s_i}\mathbf q^2_s|_{s=0}$. Since, in addition, $\mathbf q^1_{s=0}=\mathbf q^2_{s=0}$ and
$\partial_{s_i}\varPhi(\mathbf q^k_s)|_{s=0}=\langle\partial_{\mathbf q}\varPhi(\mathbf q^k_s),\partial_{s_i}\mathbf q^k_{s}\rangle|_{s=0}$ for $k=1,2$ and $i=1,\ldots,6$, we deduce that $\partial_{s_i}\varPhi(\mathbf q^1_s)|_{s=0}=\partial_{s_i}\varPhi(\mathbf q^2_s)|_{s=0}$ for all $i=1,\ldots,6$. 

We apply the diffeomorphism $\varXi_s:=R_s({\rm Id}+\vartheta)+\boldsymbol\tau_s$ to $\bar B$ to obtain the domain of the deformed swimmer. We denote $\mathcal B^\diamond:=({\rm Id}+\vartheta)(B)$ (which can be seen as the shape of the swimmer at rest, i.e. when $s=0$), $\Sigma^\diamond:=\partial\mathcal B^\diamond$, $\mathcal F^\diamond:=\mathbf R^3\setminus\bar{\mathcal B}^\diamond$ and $\mathcal B_s:=\varXi_s(B)$, $\mathcal F_s:=\mathbf R^3\setminus\bar{\mathcal B}_s$, $\Sigma_s:=\partial\mathcal B_s$.
We seek the potential $\psi^1_{\mathbf c}$, defined and harmonic in $\mathcal F_s$ in the form $\psi^1_{\mathbf c}(x)=\tilde\psi^1_c(R_s^t(x-\boldsymbol\tau_s))$ where the function $\tilde\psi^1_c$ defined on $\mathcal F^\diamond$ has to be determined. 
It is obvious that $\tilde\psi^1_c$ is harmonic in $\mathcal F^\diamond$ and we have only to determine the boundary conditions on $\Sigma^\diamond$. For all $y\in\Sigma^\diamond$, we denote $x:=R_sy+\boldsymbol\tau_s\in\Sigma_s$. The relation $\mathbf n|_{\Sigma_s}(R_sy+\boldsymbol\tau_s)=R_s\mathbf n|_{\Sigma^\diamond}(y)$ entails that $\nabla\tilde\psi^1_c(y)\cdot \mathbf n|_{\Sigma^\diamond}(y)=\nabla\psi^1_{\mathbf c}(R_sy+\boldsymbol\tau_s)\cdot \mathbf n|_{\Sigma_s}(R_sy+\boldsymbol\tau_s)=\nabla\psi^1_{\mathbf c}(x)\cdot \mathbf n(x)$,
and this quantity has to be equal to $(\mathbf e_1\times x)\cdot \mathbf n(x)$. We deduce after some elementary algebra that $
\partial_n\tilde\psi^1_c(y)=(R_s^t\mathbf e_1\times y)\cdot \mathbf n(y)+(R_s^t\mathbf e_1\times R_s^t\boldsymbol\tau_s)\cdot \mathbf n(y)$.
Proceeding similarly and with obvious notation, we obtain more generally that:
\begin{alignat*}{3}
\partial_n\tilde\psi^i_c(y)&=(R_s^t\mathbf e_i\times y)\cdot \mathbf n(y)+(R_s^t\mathbf e_i\times R_s^t\boldsymbol\tau_s)\cdot \mathbf n(y),&\qquad&(i=1,2,3),\\
\text{and}\qquad \partial_n\tilde\psi^i_c(y)&=R_s^t\mathbf e_{i-3}\cdot \mathbf n(y),&&(i=4,5,6).
\end{alignat*}
Denoting $\mathbb M_f^\diamond:=\mathbb M^r_f(\mathbf c)|_{s=0}$, we get the relations:
$$\mathbb M^r_f(c,s)=\begin{bmatrix}{\rm Id}&\hat{\boldsymbol\tau}_s\\0&{\rm Id}\end{bmatrix}\begin{bmatrix}R_s&0\\0&R_s\end{bmatrix}\mathbb M_f^\diamond\begin{bmatrix}R_s^t&0\\0&R_s^t\end{bmatrix}\begin{bmatrix}{\rm Id}&0\\ -\hat {\boldsymbol\tau}_s&{\rm Id}\end{bmatrix},$$
and  then, differentiating with respect to $s_i$:
\begin{subequations}
\label{fluid_lie_brackets}
\begin{alignat}{3}\partial_{s_i}\mathbb M^r_f(c,s)|_{s=0}&=\begin{bmatrix}\hat {\mathbf e}_i&0\\
0&\hat{\mathbf e}_i\end{bmatrix}\mathbb M_f^\diamond-\mathbb M_f^\diamond\begin{bmatrix}\hat {\mathbf e}_i&0\\
0&\hat{\mathbf e}_i\end{bmatrix},&\qquad&(i=1,2,3),\\
\partial_{s_i}\mathbb M^r_f(c,s)|_{s=0}&=\begin{bmatrix}0&\hat{\mathbf e}_{i-3}\\
0&0\end{bmatrix}\mathbb M_f^\diamond-\mathbb M_f^\diamond\begin{bmatrix}0&0\\
\hat{\mathbf e}_{i-3}&0\end{bmatrix},&&(i=4,5,6).
\end{alignat}
\end{subequations}
The reasoning is quite similar for the entries of the matrix $\mathbb N(\mathbf c)$. We only have to be careful that the vector fields  $\mathbf W^{2,2}_s$ can actually not depend on $s$ (once more, this is due to our modeling). The elements of the matrix $\partial_{s_j}\mathbb N(\mathbf c)|_{s=0}\mathbf f_i-\partial_{s_i}\mathbb N(\mathbf c)|_{s=0}\mathbf f_j$ read (still with the notation of  Lemma~\ref{lemma:import}) $\partial_{s_j}\varPhi(\mathbf q^i_s)|_{s=0}-\partial_{s_i}\varPhi(\mathbf q^j_s)|_{s=0}$ ($i,j=1,\ldots,6$) with this time $\mathbf q^i_s:=(\xi_s,\mathcal W^i_s)$, $\xi_s:=\varXi_s-{\rm Id}$, $\mathcal W^i_s:=(\mathbf W^{1}_s,\mathbf W^{i,2})$, $\mathbf W^{1}_s\in\{\mathbf e_k\times \varXi_s,\,\mathbf e_k,\,k=1,2,3\}$ and $\mathbf W^{i,2}:=\mathbf e_i\times ({\rm Id}+\vartheta)$ ($i=1,2,3$) or $\mathbf W^{i,2}=\mathbf e_{i-3}$ ($i=4,5,6$). The only difference with the elements of $\mathbb M^r_f(\mathbf c)$ being that $\mathbf W^{i,2}$ does not depend on $s$ for $i=1,2,3$, we wish to reuse the preceding calculations. Invoking the chain rule, we have to subtract to $\partial_{s_j}\varPhi(\mathbf q^i_s)|_{s=0}-\partial_{s_i}\varPhi(\mathbf q^j_s)|_{s=0}$ the quantity $\langle\partial_{\mathbf W^2}\varPhi(\mathbf q^i_s),\partial_{s_j}\mathbf W^{i,2}_s\rangle|_{s=0}-\langle\partial_{\mathbf W^2}\varPhi(\mathbf q^j_s),\partial_{s_i}\mathbf W^{j,2}_s\rangle|_{s=0}$ which is quite easy to determine because $\varPhi$ is linear in the variable $\mathbf W^2$. Thus, this last expression is merely equal to $\varPhi({\mathbf q}_{i,j})$ ($1\leq i,j\leq 6$) with $\mathbf q_{i,j}:=(\vartheta,\mathcal W_{i,j})$, $\mathcal W_{i,j}:=(\mathbf W^1,\mathbf W^2_{i,j})$, $\mathbf W^1\in\{\mathbf e_k\times ({\rm Id+\vartheta}),\,\mathbf e_k,\,k=1,2,3\}$ and 
$$\mathbf W^2_{i,j}:=
\begin{cases}(\mathbf e_i\times\mathbf e_j)\times({\rm Id}+\vartheta)&\text{if }1\leq i,j\leq 3,\\
(\mathbf e_i\times\mathbf e_j)&\text{if }4\leq i\leq 6\,,1\leq j\leq 3,\\
(\mathbf e_i\times\mathbf e_j)&\text{if }1\leq i\leq 3\,,4\leq j\leq 6,\\
\mathbf 0&\text{if }4\leq i,j\leq 6.
\end{cases}$$
We eventually obtain that:
\begin{equation}
\label{deux_lie_brackets}
\partial_{s_j}\mathbb N(\mathbf c)|_{s=0}\mathbf f_i-\partial_{s_i}\mathbb N(\mathbf c)|_{s=0}\mathbf f_j=\partial_{s_j}\mathbb M^r_f(\mathbf c)|_{s=0}\mathbf f_i-\partial_{s_i}\mathbb M^r_f(\mathbf c)|_{s=0}\mathbf f_j-\mathbb N^\diamond(\mathbf f_i\star\mathbf f_j),
\end{equation}
where $\mathbb N^\diamond:=\mathbb N(\mathbf c)|_{s=0}=\mathbb M^\diamond_f$ and $\mathbf f_i\star\mathbf f_j:=(\mathbf f_i^1\times \mathbf f_j^1,\mathbf f_i^1\times \mathbf f_j^2-\mathbf f_j^1\times \mathbf f_i^2)^t$ with the notation $\mathbf f_i=(\mathbf f_i^1,\mathbf f_i^2)^t\in\mathbf R^3\times\mathbf R^3$. 
\subsubsection*{Specifying the density and shape}The expression of the $6\times 6$ symmetric added mass matrix $\mathbb M^\diamond_f$ depends only on the domain $\mathcal F^\diamond$ or equivalently on $\mathcal B^\diamond$. As stated in Proposition~\ref{matrix_positive_definite} in the appendix, this matrix is positive definite if we choose $\vartheta$ in such a way that $\mathcal B^\diamond$ has no axis of symmetry. It entails that, up to a change of frame $\mathfrak e$ at the initial time, $\mathbb M^\diamond_f$ can be assumed to be diagonal with positive eigenvalues $\mu_j$ ($j=1,\ldots,6$). On the other hand, denoting by ${\rm S}(3)^+$ the set of the $3\times 3$ symmetric matrices that are positive definite, we can quite easily prove that for any $\mathcal B^\diamond$ (which means for any $\vartheta\in D^1_0(\mathbf R^3)$), the mapping $\varrho\in C^0(\bar{\mathcal B}^\diamond)^+\mapsto \int_{\mathcal B^\diamond}\varrho(\|x\|_{\mathbf R^3}{\rm Id}-x\otimes x){\rm d}x\in{\rm S}(3)^+$ is onto. We deduce that for any $(I_1,I_2,I_3)\in\mathbf R^3$ such that $I_i>0$ for $i=1,2,3$, there exists $\varrho\in C^0(\bar B)^+$ such that the inertia tensor $\mathbb I(\mathbf c)|_{s=0}$ is diagonal, equal to ${\rm diag}(I_1,I_2,I_3)$. 
In this case, the matrix $\mathbb M^r_b(\mathbf c)|_{s=0}$ is diagonal as well, equal to ${\rm diag}(I_1,I_2,I_3,m,m,m,)$. We deduce that the vector fields $\mathbf X_i(\mathbf c)|_{s=0}$ read $-\mu_i/(I_i+\mu_i)\mathbf f_i$ if $i=1,2,3$ and  $-\mu_i/(m+\mu_i)\mathbf f_i$ if $i=4,5,6$. Summarizing \eqref{identity:lie_brackets}, \eqref{lie_brackets}, \eqref{decomp}, \eqref{rigid_lie_brackets} (recall that $\eta_i=0$ for all $i=1,\ldots,6$), \eqref{fluid_lie_brackets} and \eqref{deux_lie_brackets}, we obtain the following expressions for the Lie brackets at $(R,s)=({\rm Id},0)$:
\begin{align*}
[\mathbf Z_c^1,\, \mathbf Z_c^2]&=\frac{I_1I_2(-\mu_1-\mu_2+\mu_3)+\mu_1\mu_2(-I_1-I_2+I_3)}{(\mu_1+I_1)(\mu_2+I_2)(\mu_3+I_3)}\begin{pmatrix}
\hat{\mathbf e}_3\\ \mathbf 0\\ \mathbf 0\end{pmatrix},\\
[\mathbf Z_c^1,\, \mathbf Z_c^3]&=\frac{I_1I_3(\mu_1-\mu_2+\mu_3)+\mu_1\mu_3(I_1-I_2+I_3)}{(\mu_1+I_1)(\mu_2+I_2)(\mu_3+I_3)}\begin{pmatrix}
\hat{\mathbf e}_2\\ \mathbf 0\\ \mathbf 0\end{pmatrix},\\
[\mathbf Z_c^2,\, \mathbf Z_c^3]&=\frac{I_2I_3(\mu_1-\mu_2-\mu_3)+\mu_2\mu_3(I_1-I_2-I_3)}{(\mu_1+I_1)(\mu_2+I_2)(\mu_3+I_3)}\begin{pmatrix}
\hat{\mathbf e}_1\\ \mathbf 0\\ \mathbf 0\end{pmatrix},\\
[\mathbf Z_c^2,\, \mathbf Z_c^4]&=\frac{I_2m(\mu_4-\mu_6)}{(\mu_2+I_2)(\mu_4+m)(\mu_6+m)}\begin{pmatrix}
\mathbf 0\\ {\mathbf e}_3\\ \mathbf 0\\\end{pmatrix},\\
[\mathbf Z_c^3,\, \mathbf Z_c^4]&=\frac{I_3m(\mu_5-\mu_4)}{(\mu_3+I_3)(\mu_4+m)(\mu_5+m)}\begin{pmatrix}
\mathbf 0\\ {\mathbf e}_2\\ \mathbf 0\\\end{pmatrix},\\
[\mathbf Z_c^3,\, \mathbf Z_c^5]&=\frac{I_3m(\mu_5-\mu_4)}{(\mu_3+I_3)(\mu_4+m)(\mu_5+m)}\begin{pmatrix}
\mathbf 0\\ {\mathbf e}_1\\ \mathbf 0\\\end{pmatrix}.
\end{align*}
Since, on the other hand, when $(R,s)=({\rm Id},0)$ we also have $$\mathbf Z^i_c=\begin{pmatrix}-{\mu_i}\hat{\mathbf e}_i/({\mu_i+I_i})\\
\mathbf 0\\ \mathbf f_i\end{pmatrix}\quad\text{ if }\,i=1,2,3,\quad\text{and}\quad\mathbf Z^i_c=\begin{pmatrix}\mathbf 0\\
-{\mu_i}{\mathbf e}_i/({\mu_i+m})\\
\mathbf f_i\end{pmatrix}\quad\text{ if }i=4,5,6,$$
we deduce that a sufficient condition ensuring that dim$\,{\rm Lie}_{({\rm Id},\mathbf 0,0)}\mathcal Z(c)=12$  is that 
\begin{multline}
\label{cond0}
\big[I_1I_2(-\mu_1-\mu_2+\mu_3)+\mu_1\mu_2(-I_1-I_2+I_3)\big]\big[I_1I_3(\mu_1-\mu_2+\mu_3)+\mu_1\mu_3(I_1-I_2+I_3)\big]\\
\big[I_2I_3(\mu_1-\mu_2-\mu_3)+\mu_2\mu_3(I_1-I_2-I_3)\big]\big[I_2m(\mu_4-\mu_6)I^2_3m^2(\mu_5-\mu_4)^2\big]\neq 0.
\end{multline}
According to \cite[pages 152-155]{Lamb:1993aa}, if we specialize now $\mathcal B^\diamond$ to be an ellipsoid, the length of the axes can be chosen in such a way that (i) it has no axis of symmetry (and hence $\mu_i>0$  for $i=1,\ldots,6$), (ii) $\mu_4\neq \mu_5$ and (iii) $\mu_4\neq \mu_6$.  Since $I_i>0$ ($i=1,2,3$) and obviously $m>0$, the condition \eqref{cond0} reduces to:
\begin{multline}
\label{cond}
\big[I_1I_2(-\mu_1-\mu_2+\mu_3)+\mu_1\mu_2(-I_1-I_2+I_3)\big]\big[I_1I_3(\mu_1-\mu_2+\mu_3)+\mu_1\mu_3(I_1-I_2+I_3)\big]\\
\big[I_2I_3(\mu_1-\mu_2-\mu_3)+\mu_2\mu_3(I_1-I_2-I_3)\big]\neq 0.
\end{multline}
As already mentioned, it is always possible to achieve any triplet of positive numbers $(I_1,I_2,I_3)$ with a suitable choice of density, so whatever the values of $\mu_i$ ($i=1,2,3$) are, it is always possible to find $\varrho\in C^0(\bar B)^+$ such that 
 \eqref{cond} holds. It entails, according to the forth point of Proposition~\ref{properties_of_SC}:
\begin{proposition}
\label{everything_is_controllable}
For any integer $n\geq 5$, the set of all the controllable SC is an open dense subset in $\mathcal C(n)$. 
\end{proposition}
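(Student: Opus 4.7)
The plan is to invoke Proposition~\ref{properties_of_SC}(4), which reduces the task to exhibiting a single controllable SC in $\mathcal C(n_0)$ for some $n_0\leq 5$: once such an SC is at hand, openness in $\mathcal C(n)$ for every $n\geq n_0$ follows because ``$\dim{\rm Lie}=6+n$'' is equivalent to the non-vanishing of some analytic determinant in the entries of the mass matrices (which depend analytically on the SC by Theorem~\ref{theorem:2}), and density follows from the fact that a non-identically-zero analytic function on the connected analytic manifold $\mathcal C(n)$ of Theorem~\ref{theorem:1} vanishes only on a closed set with empty interior (see \cite{Whittlesey:1965aa}). So the whole argument reduces to producing one controllable $c\in\mathcal C(5)$.

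The candidate $c:=(\varrho,\vartheta,\mathcal V)$ is exactly the one suggested by the preceding subsection. I would choose $\vartheta\in D^1_0(\mathbf R^3)$ so that $\mathcal B^\diamond:=({\rm Id}+\vartheta)(B)$ is a triaxial ellipsoid; by Lamb's classical formulas (\cite{Lamb:1993aa}, pp.~152--155) this diagonalizes $\mathbb M_f^\diamond$ with entries $\mu_1,\ldots,\mu_6>0$ satisfying the non-degeneracies $\mu_4\neq\mu_5$ and $\mu_4\neq\mu_6$. Next, I would pick $\varrho\in C^0(\bar B)^+$ making $\mathbb I(\mathbf c)|_{s=0}={\rm diag}(I_1,I_2,I_3)$ with $(I_1,I_2,I_3)\in\mathbf R_+^3$ chosen to satisfy \eqref{cond}; such a triple exists because, for fixed $\mu_j$, each factor of \eqref{cond} is a nontrivial affine function of $(I_1,I_2,I_3)$, so their product is a nonzero polynomial on $\mathbf R_+^3$. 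Finally, $\mathcal V=(\mathbf V_1,\ldots,\mathbf V_5)$ is constructed via Proposition~\ref{rectif_vector_fields} with boundary traces on $\Sigma^\diamond$ equal to the first five rigid-motion generators $\mathbf e_i\times({\rm Id}+\vartheta)$ $(i=1,2,3)$ and $\mathbf e_{i-3}$ $(i=4,5)$, arranged so that each $\eta_i$ vanishes and the self-propelled conditions of Definition~\ref{def:sc} hold.

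It then remains to verify that $\dim{\rm Lie}_{\zeta^*}\mathcal Z(c)=11$ at $\zeta^*:=({\rm Id},\mathbf 0,0)$. The five fields $\mathbf Z_c^i$ are linearly independent since their shape-space components $\mathbf f_i$ are. The six first-order brackets $[\mathbf Z_c^1,\mathbf Z_c^2]$, $[\mathbf Z_c^1,\mathbf Z_c^3]$, $[\mathbf Z_c^2,\mathbf Z_c^3]$, $[\mathbf Z_c^2,\mathbf Z_c^4]$, $[\mathbf Z_c^3,\mathbf Z_c^4]$, $[\mathbf Z_c^3,\mathbf Z_c^5]$ just computed are nonzero multiples of the pure directions $(\hat{\mathbf e}_3,\mathbf 0,\mathbf 0)$, $(\hat{\mathbf e}_2,\mathbf 0,\mathbf 0)$, $(\hat{\mathbf e}_1,\mathbf 0,\mathbf 0)$, $(\mathbf 0,\mathbf e_3,\mathbf 0)$, $(\mathbf 0,\mathbf e_2,\mathbf 0)$, $(\mathbf 0,\mathbf e_1,\mathbf 0)$ respectively, with nonvanishing coefficients guaranteed by \eqref{cond} together with $\mu_4\neq\mu_5$ and $\mu_4\neq\mu_6$. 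Having no shape-space component, they are independent from the five $\mathbf Z_c^i$ and from one another, yielding $5+6=11=6+n$ linearly independent tangent vectors. The main subtlety is not computational but accounting: one must notice that the single bracket $[\mathbf Z_c^2,\mathbf Z_c^4]$ already produces the missing translation $\mathbf e_3$, so the sixth rigid-motion field used in the $\mathcal C(6)$ discussion above is superfluous, which is precisely why the threshold in the statement is $n\geq 5$ rather than $n\geq 6$.
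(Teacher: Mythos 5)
Your proposal is correct and follows essentially the same route as the paper: reduce to producing one controllable SC via Proposition~\ref{properties_of_SC}(4), realize it with a triaxial-ellipsoid shape and rigid-shell basic movements (via Proposition~\ref{rectif_vector_fields}), choose $\varrho$ so that the bracket coefficients in \eqref{cond} are nonzero, and read off the $5+6=11$-dimensional Lie algebra from the explicit first-order brackets. The only (harmless) slip is describing each factor of \eqref{cond} as ``affine'' in $(I_1,I_2,I_3)$ when it is actually bilinear/quadratic; the nonvanishing conclusion holds all the same since each factor is a nontrivial polynomial (e.g.\ the term $\mu_1\mu_2(-I_1-I_2+I_3)$ survives even when $-\mu_1-\mu_2+\mu_3=0$).
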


Notice that $n=5$ in this Proposition (instead of $n=6$), because we did not use the vector field $\mathbf Z_c^6$ in the computation of the Lie brackets.
\section{Proofs of the Main Results}
\label{sec:main_results}
\subsubsection*{Proof of Proposition~\ref{existence}}
Let a control function $\vartheta$ be given in $AC([0,T],D^1_0(\mathbf R^3))$ and denote $\varTheta:={\rm Id}+\vartheta$. With the notation of Lemma~\ref{lemma:import}, at any time $t$ the entries of the matrix $\mathbb M^r_f(t)$ read $\varPhi(\mathbf q)$ with $\mathbf q:=(\vartheta,\mathcal W)$ and $\mathcal W:=(\mathbf W^1,\mathbf W^2)$, $\mathbf W^i\in\{\mathbf e_i\times\varTheta_t,\,\mathbf e_i,\,i=1,2,3\}$. We deduce that $t\in[0,T]\mapsto\mathbb M^r_f(t)\in{\rm M}(3)$ is in $AC([0,T],{\rm M}(3))$. To get the expression of the elements of the vector $\mathbf N(t)$ we only have to modify $\mathbf W^2$ which has to be equal to $\partial_t\vartheta_t$. It entails that $t\in[0,T]\mapsto\mathbf N(t)\in\mathbf R^6$ is in $L^1([0,T])^6$. It is quite easy to verify that the inertia tensor $\mathbb I(t)$ is in $AC([0,T],{\rm M}(3))$.
Existence and uniqueness of solutions is now straightforward because $t\in[0,T]\mapsto\mathbb M^r(t)^{-1}\mathbf N(t)\in\mathbf R^6$ is in $AC([0,T],\mathbf R^6)$. 

Let us address the stability result. With the same notation as in the statement of Proposition~\ref{existence}, denote $({\boldsymbol\Omega}^j,\mathbf v^j)^t$ the left hand side of identity \eqref{dynamics} with control $\vartheta^j$ and $(\bar{\boldsymbol\Omega},\bar{\mathbf v})^t$ when the control is $\bar\vartheta$.  As $j\to+\infty$, it is clear that $({\boldsymbol\Omega}^j,\mathbf v^j)^t\to(\bar{\boldsymbol\Omega},\bar{\mathbf v})^t$ in $L^1([0,T],\mathbf R^6)$. Then, integrating \eqref{complement} between 0 and $t$ for any $0\leq t\leq T$, we get the estimate $\|\bar R(t)-R^j(t)\|_{{\rm M}(3)}\leq \int_0^t\|\bar R(s)-R^j(s)\|_{{\rm M}(3)}\|\bar{\boldsymbol\Omega}(s)\|_{\mathbf R^3}+\|\boldsymbol\Omega^j(s)-\bar{\boldsymbol\Omega}(s)\|_{\mathbf R^3}{\rm d}s.$ Applying Gr\"onwall inequality, we conclude that $R^j\to\bar R$ in $C([0,T],{\rm M}(3))$ as $j\to+\infty$ and we use again the ODE to prove that $\dot R^j\to\dot{\bar R}$ in $L ^1([0,T])$. Next, it is then easy to obtain the convergence of $\mathbf r^j$ to $\bar{\mathbf r}$ and to conclude the proof.

\subsubsection*{Proof of Theorems~\ref{main_theorem_cont} and \ref{main_theorem_free}} We shall focus on the proof of Theorem~\ref{main_theorem_cont} because it will contain the proof of Theorem~\ref{main_theorem_free}. For any integer $n$, we shall use the notation $\|c\|_{\mathcal C(n)}:=\|\varrho\|_{C^0(\bar B)}+\|\vartheta\|_{C^1_0(\mathbf R^3)^3}+\sum_{i=1}^n\|\mathbf V_i\|_{C^1_0(\mathbf R^3)^3}$
for all $c\in C^0(\bar B)\times C^1_0(\mathbf R^3)^3\times(C^1_0(\mathbf R^3)^3)^n$ with, as usual, $c:=(\varrho,\vartheta,\mathcal V)$ and $\mathcal V:=(\mathbf V_1,\ldots,\mathbf V_n)$.

Let $\varepsilon>0$ and the functions $\bar\varrho\in C^0(\bar B)$, $t\in[0,T]\mapsto\bar\vartheta_t\in D^1_0(\mathbf R^3)$ and $t\in[0,T]\mapsto (\bar R(t),\bar{\mathbf r}(t))\in{\rm SO}(3)\times\mathbf R^3$ be given as in the statement of the theorem. 

Set now $\bar\varrho^1:=\bar\varrho$, $\bar\vartheta^1:=\bar\vartheta_{t=0}$ and $\bar{\mathbf V}^1_1:=\partial_t\bar\vartheta_{t=0}\in C^1_0(\mathbf R^3)^3$. According to the self-propelled constraints \eqref{self-propelled-cond}, it is always possible to find four elements $\bar{\mathbf V}^1_j$ ($j=2,\ldots,5$) in $C^1_0(\mathbf R^3)^3$ such that the element $\bar c^1:=(\bar\varrho^1,\bar\vartheta^1,\bar{\mathcal V}^1)$ be a SC which belongs to ${\mathcal C}(5)$, with $\bar{\mathcal V}^1:=(\bar{\mathbf V}^1_1,\ldots,\bar{\mathbf V}^1_5)$. Then, Proposition~\ref{everything_is_controllable} guarantees that for any $\delta>0$ it is possible to find a SC {\it controllable} in $\mathcal C(5)$, denoted by $c^1:=(\varrho^1,\vartheta^1,\mathcal V^1)$ where $\mathcal V^1:=(\mathbf V^1_1,\ldots,\mathbf V^1_5)$, such that $\|c^1-\bar c^1\|_{\mathcal C(5)}<\delta/2$. 
Since $t\mapsto\partial_t\bar\vartheta_t$ is continuous on the compact set $[0,T]$, it is uniformly continuous. For any $\nu>0$, there exists $\delta_\nu>0$ such that 
$ \|\bar\partial_t\vartheta_t-\bar\partial_t\vartheta_{t'}\|_{C^1_0(\mathbf R^3)^3}< \nu$ providing that $|t-t'|\leq \delta_\nu$.
We then divide the time interval $[0,T]$ into $0=t_1<t_2<\ldots<t_k=T$ such that $|t_{j+1}-t_j|<\delta_\nu$ for $j=1,\ldots,k-1$.
For any $t_1\leq t\leq t_2$, we have the estimate:
\begin{multline*}
\|\bar\vartheta_t-(\vartheta^1+(t-t_1)\mathbf V_1^1)\|_{C^1_0(\mathbf R^3)^3}\leq\|\bar\vartheta_t-(\bar\vartheta^1+(t-t_1)\bar{\mathbf V}^1_1)\|_{C^1_0(\mathbf R^3)^3}\\
+\|\bar\vartheta^1-\vartheta^1\|_{C^1_0(\mathbf R^3)^3}+(t-t_1)\|\bar{\mathbf V}^1_1-\mathbf V_1^1\|_{C^1_0(\mathbf R^3)^3}.
\end{multline*}
On the one hande, we have, for all $t\in [t_1,t_2]$, $\|\bar\vartheta_t-(\bar\vartheta^1+(t-t_1)\bar{\mathbf V}^1_1)\|_{C^1_0(\mathbf R^3)^3}< \nu |t-t_1|$. 
On the other hand, still for $t_1\leq t\leq t_2$ and if we assume that $\delta_\nu<1$, we get: $\|\bar\vartheta^1-\vartheta^1\|_{C^1_0(\mathbf R^3)^3}+(t-t_1)\|\bar{\mathbf V}^1_1-\mathbf V_1^1\|_{C^1_0(\mathbf R^3)^3}\leq \delta/2.$
We introduce now $\bar\varrho^2:=\bar\varrho^1$ and $\bar\vartheta^2:=\bar\vartheta_{t=t_2}$. It is always possible to supplement $\bar{\mathbf V}^2_1:=\partial_t\bar\vartheta_{t_2}$ with vector fields $\bar{\mathbf V}_j^2$ ($j=2,\ldots,5$) in such a way that $\bar c^2:=(\bar\varrho^2,\bar\vartheta^2,\bar{\mathcal V}^2)$ be in ${\mathcal C}(5)$ with the obvious notation $\bar{\mathcal V}^2:=(\bar{\mathbf V}^2_1,\ldots,\bar{\mathbf V}_5^2)$. 

We define $\varrho^2:=\varrho^1$ and $\vartheta^2:=\vartheta^1+(t_2-t_1)\mathbf V^1_1$. For any $t_1\leq t\leq t_2$, Proposition~\ref{properties_of_SC} guarantees that the SC $c^1_t:=(\varrho^1,\vartheta^1+(t-t_1)\mathbf V^1_1,\mathcal V^1)$ is controllable. In particular, for $t=t_2$, there exists an integer $k$ and a family of 11 vector fields in $\mathcal E_k$ (the set of all the Lie brackets of order lower or equal to $k$) such that the determinant of the family is nonzero. But this determinant can be thought of as an analytic function in $\mathcal V^1$. The set $\pi^{-1}(\{(\varrho^1,\vartheta^1)\})$ being an analytic connected submanifold of $(C^1_0(\mathbf R^3)^3)^5$ (see Corollary~\ref{cor:2}), the determinant is nonzero everywhere on this set but maybe in a closed subset of empty interior (for the induced topology). Therefore, it is possible to find $\mathcal V^2\in (C^1_0(\mathbf R^3)^3)^5$ such that $\|\bar c^2-c^2\|_{\mathcal C(5)}<(\delta/2+\nu (t_2-t_1))+\delta/4$, and $c^2:=(\varrho^2,\vartheta^2,\mathcal V^2)$ is controllable. 

 By induction, we can build $\bar c^j$ and $c^j$ ($j=1,2,\ldots,k$) such that  $\|\bar c^j-c^j\|\leq \delta/2+\sum_{i=2}^k{\delta}/{2^i}+\nu (t_{i}-t_{i-1})<\delta+\nu T$. We choose $\delta$ and $\nu$ in such a way that $\delta+\nu T<\varepsilon/2$ and we define $t:[0,T]\mapsto \tilde\vartheta_t\in D^1_0(\mathbf R^3)$ and $t\in[0,T]\mapsto \tilde c_t\in\mathcal C(5)$ as continuous, piecewise affine functions by $\tilde\vartheta_t:=\vartheta^j+(t-t_j)\mathbf V^j_1$ and $\tilde c_t:=(\tilde\varrho,\tilde\vartheta_t,\tilde{\mathcal V}_t)$ with $\tilde\varrho:=\varrho^j=\varrho^1$, $\tilde{\mathcal V}_t:=\mathcal V^j$ if $t\in [t_j,t_{j+1}]$ ($j=1,\ldots,k-1$).  Notice that for any $t\in[0,T]$, (i) $\|\bar\vartheta_t-\tilde\vartheta_t\|_{C^1_0(\mathbf R^3)^3}<\varepsilon/2$ and (ii) $\tilde c_t$ is controllable.
 
Definition~\ref{def:cont:SC} 	and Proposition~\ref{prop:prop12} ensure that, on every   interval $[t_j,t_{j+1}]$ ($j=1,\ldots,k-1$), there exist five $C^1$ functions $\lambda^j_i:[t_j,t_{j+1}]\mapsto \mathbf R$ ($i=1,\ldots,5$) such that the solution $(R_j,\mathbf r_j,s^j):[t_j,t_{j+1}]\to {\rm SO}(3)\times\mathbf R^3\times\mathbf R^5$ to the ODE \eqref{dynamics:3}  with vector fields $\mathbf Z_{c^j}^i(R_j,s)$ and Cauchy data $R_j(t_j)=\bar R(t_j)$, $\mathbf r_j(t_j)=\bar{\mathbf r}(t_j)$ and $s^j(t_j)=0$ satisfy:
\begin{enumerate}
\item 
$\sup_{t\in[t_j,t_{j+1}]}\Big(\| \bar R(t)-R_j(t)\|_{{\rm M}(3)}+\|\bar{\mathbf r}(t)-\mathbf r_j(t)\|_{\mathbf R^3}+\|\tilde\vartheta_t-\vartheta_t^j\|_{C^1_0(\mathbf R^3)^3}\Big)<\varepsilon/2$ with $\vartheta^j_t:=\vartheta^j+\sum_{i=1}^5s^j_i(t)\mathbf V^j_i$;
\item $R_j(t_{j+1})=\bar R(t_{j+1})$, $\mathbf r_j(t_{j+1})=\bar{\mathbf r}(t_{j+1})$ and $s^j(t_{j+1})=(t_{j+1}-t_j,0,0,0,0)^t
$;
\end{enumerate}
 With these settings, the functions $t\in[0,T]\mapsto\breve\vartheta_t\in D^1_0(\mathbf R^3)$, $\breve R:[0,T]\to{\rm SO}(3)$ and $\breve{\mathbf r}:[0,T]\to\mathbf R^3$ defined by $\breve\vartheta_t:=\vartheta^j_t$, $\breve R(t):=R_j(t)$ and $\breve{\mathbf r}(t):=\mathbf r_j(t)$ if $t\in[t_j,t_{j+1}]$ ($j=1,\ldots,k-1$) are continuous, piecewise $C^1$. We extend also every functions $s^j$ on $[0,T]$ by setting $s^j(t):=0$ if $t\in[0,t_j[$ and $s^j(t):=(t_{j+1}-t_j,0,0,0,0)^t$ if 
 $t\in]t_{j+1},T]$. They are continuous, piecewise $C^1$ as well.
 
It remains to smooth the function $\breve \vartheta_t$ (and hence also $\breve R$ and $\breve{\mathbf r}$). We can extend the functions $\lambda_i^j$ on the whole interval $[0,T]$ by merely setting $\lambda_i^j(t)=0$ if $t\notin[t_j,t_{j+1}]$. Then, denoting $(\mathbf F^j_i)_{1\leq i\leq 5\atop 1\leq j\leq k}$ the canonical basis of $(\mathbf R^5)^k$ and $\breve S:=\sum_{j=1}^k\sum_{i=1}^5s_i^j\mathbf F_i^j\in (\mathbf R^5)^k$, we get that $(\breve R,\breve{\mathbf r},\breve S)$ is a Carath\'eodory's solution to the following equation on $[0,T]$:
\begin{equation}
\label{la_haut}
\frac{d}{dt}\begin{pmatrix}\breve R\\ \breve{\mathbf r}\\ \breve S\end{pmatrix}=
\sum_{j=1}^k\sum_{i=1}^5{\lambda_i^j}(t)\mathbf T^j_i(\breve R,\breve S),
\end{equation}
where $\mathbf T^j_i(\breve R,\breve S):=(\breve R\hat{\mathbf X}^1_i(c^j,s^j),\breve R{\mathbf X}^2_i(c^j,s^j),\mathbf F_i^j)\in T_R{\rm SO}(3)\times\mathbf R^3\times (\mathbf R^5)^k$. 
Let $\check\lambda_i^j$ denote analytic approximations of the functions $\lambda_i^j$ in $L^1([0,T])$ and denote $(\check R,\check{\mathbf r},\check{S})$ the corresponding analytic solution to system \eqref{la_haut} with $\check S:=(\check s_1^1,\ldots,\check s_5^1,\check s_1^2,\ldots,\check s_5^2,\ldots,\check s_1^k,\ldots,\check s_5^k)$ and $\check\vartheta_t:=\vartheta_0^1+\sum_{j=1}^k\sum_{i=1}^5 \check s^j_i(t)\mathbf V_i^j$ which is analytic from $[0,T]$ to $C^1_0(\mathbf R^3)^3$. According to Lemma~\ref{LEM:converg}, $\breve\vartheta_t-\check\vartheta_t$ can be made arbitrarily small in $AC([0,T],C^1_0(\mathbf R^3)^3)$, providing that  the functions $\check\lambda_i^j$ are close enough to $\lambda_i^j$ in $L^1([0,T])$. Notice that $\check\vartheta$ does probably not satisfy the self-propelled constraints \eqref{self-propelled-cond} (especially at the times $t_j$, $j=2,\ldots,k-1$). It remains to invoke Proposition~\ref{prop:make_allowable} and the continuity of the input-output function (second point of Proposition~\ref{existence}) to conclude that there exists $t\in[0,T]\mapsto\vartheta_t\in D^1_0(\mathbf R^3)$ analytic, satisfying \eqref{self-propelled-cond} and such that 
$$\sup_{t\in[0, T]}\Big(\| R(t)-\breve R(t)\|_{{\rm M}(3)}+\|{\mathbf r}(t)-\breve{\mathbf r}(t)\|_{\mathbf R^3}+\|\vartheta_t-\breve\vartheta_t\|_{C^1_0(\mathbf R^3)^3}\Big)<\varepsilon/2,$$
where $(R,\mathbf r):[0,T]\mapsto {\rm SO}(3)\times\mathbf R^3$ is the solution to System~\eqref{main_dynamics} with initial data $(R(0),\mathbf r(0))=(\bar R(0),\bar{\mathbf r}(0))$ and control $\vartheta$. The proof is then complete. 
\section{Conclusion}
\label{sec:conclusion}
In this paper, we have proved that,  for a 3D shape changing body, the ability of swimming (i.e. not only moving but  tracking any given trajectory) in a vortex free environment is generic. The genericity refers to the shape of the body, its density and the basic movements (at most five) required for swimming. This result is part of a series of articles \cite{Munnier:2008aa, Munnier:2008ab, Munnier:2010aa, Chambrion:2010aa} studying locomotion in a potential flow and the next step of this study will be to investigate whether this controllability result can be extended to a flow with vortices.

\appendix

\section{Function spaces}
\label{SEC:diffeo}
\subsubsection*{Classical function spaces} 
\begin{itemize}
\item For any compact set $K\subset\mathbf R^n$ ($n$ a positive integer), the space $C^0(K)$ consists of the continuous functions in $K$ endowed with the norm $\|u\|_{C^0(K)}:=\sup_{x\in K}|u(x)|$. 
The open subset of the positive functions of $C^0(K)$ is denoted $C^0(K)^+$.
\item  For any open set $\Omega\subset\mathbf R^3$ (included $\Omega=\mathbf R^3$), $\mathcal D(\Omega)$ is the space of the smooth ($C^\infty$) functions, compactly supported in $\Omega$. 
\item For any open set $\Omega\subset\mathbf R^3$ (included $\Omega=\mathbf R^3$), the set $C^1_0(\Omega)$ 
is the completion of $\mathcal D(\Omega)$ for the norm $\|u\|_{C^1_0(\Omega)}:=\sup_{x\in\Omega}|u(x)|+\|\nabla u(x)\|_{\mathbf R^3}$. When $\Omega=\mathbf R^3$, we get $C^1_0(\mathbf R^3):=\{u\in C^1(\mathbf R^3)\,:\:|u(x)|\to 0\text{ and }\|\nabla u(x)\|_{\mathbf R^3}\to 0\text{ as }\|x\|_{\mathbf R^3}\to+\infty\}$.
\item The space $C^1_0(\mathbf R^3)^3$ is the Banach space of all of the vector fields in $\mathbf R^3$ whose every component belongs to $C^1_0(\mathbf R^3)$.
\item Let $E$ be an open subset or an embedded submanifold of a Banach space and $T>0$, then $AC([0,T],E)$ consists in the absolutely continuous functions from $[0,T]$ into $E$. It is endowed with the norm 
$\|u\|_{AC([0,T],E)}:=\sup_{t\in[0,T]}\|u_t\|_E+\int_0^T\|\partial_t u_t\|_E{\rm d}t$.
\item $C_0^m(\Omega,{\rm M}(k))$ ($m$ an integer) is the Banach space of the  functions of class $C^m$ in $\mathbf R^3$ valued in ${\rm M}(k)$ (${\rm M}(k)$ stands for the Banach space of the $k\times k$ matrices, $k$ a positive integer) and compactly supported in $\Omega$. 
\item $E^m_0(\Omega,{\rm M}(k))$ stands for the connected component containing the zero function of the open subset $\{M\in C^m_0(\Omega,{\rm M}(k))\,:\det({\rm Id}+M(x))\neq 0\,\,\forall\,x\in\mathbf R^3\}$. 
\end{itemize}
\begin{lemma}
The set $\tilde D^1_0(\mathbf R^3):=\{\vartheta\in C^1_0(\mathbf R^3)^3\;{\rm s.t.}\;{\rm Id}+\vartheta{\rm\; is\; a }\;C^1\;{\rm diffeomorphism\; of\; }\mathbf R^3\}$
is open in $C^1_0(\mathbf R^3)^3$.
\end{lemma}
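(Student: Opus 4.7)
The plan is to reduce the question to a pointwise condition on Jacobians: I will show that $\vartheta \in C^1_0(\mathbf R^3)^3$ belongs to $\tilde D^1_0(\mathbf R^3)$ if and only if $\det \nabla({\rm Id}+\vartheta)(x) > 0$ for every $x \in \mathbf R^3$. Once this equivalence is in hand, openness is essentially automatic because the pointwise positivity combined with the fact that $\det \nabla({\rm Id}+\vartheta)(x) \to 1$ as $\|x\|\to\infty$ supplies a \emph{uniform} lower bound on the Jacobian that is stable under small $C^1_0$-perturbations.

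The easy direction of the characterization is immediate. If ${\rm Id}+\vartheta$ is a $C^1$ diffeomorphism then $\nabla({\rm Id}+\vartheta)(x) = {\rm Id}+\nabla\vartheta(x)$ is invertible at every $x$, hence $\det\nabla({\rm Id}+\vartheta)(x) \neq 0$; and since $\nabla\vartheta(x)\to 0$ at infinity the determinant tends to $1$, forcing the sign to be positive on the connected set $\mathbf R^3$.

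The converse is the heart of the argument and is where I expect the main difficulty. Given the pointwise positivity of the Jacobian, the inverse function theorem makes $\varTheta := {\rm Id}+\vartheta$ a $C^1$ local diffeomorphism of $\mathbf R^3$. The decay $\vartheta(x)\to 0$ as $\|x\|\to\infty$ yields $\|\varTheta(x)\|\to\infty$, so $\varTheta$ is proper. A proper local homeomorphism between connected Hausdorff manifolds is a covering map, and since $\mathbf R^3$ is simply connected any such covering must be trivial; combined with the local $C^1$-diffeomorphism property this makes $\varTheta$ a global $C^1$ diffeomorphism. Readers wishing to avoid covering space theory can instead use a Hadamard-type direct argument: for $\|y\|$ large the equation $x = y - \vartheta(x)$ has a unique solution in a ball around $y$ by Banach's contraction principle (since $\|\nabla\vartheta\|$ is small at infinity), so the cardinality of fibres, which is locally constant on $\mathbf R^3$ by the local diffeomorphism property, equals $1$ at infinity and therefore everywhere by connectedness.

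With this characterization established, openness is routine. Fix $\vartheta \in \tilde D^1_0(\mathbf R^3)$ and set $\varTheta := {\rm Id}+\vartheta$. Since $\det\nabla\varTheta \to 1$ at infinity we can pick $R>0$ with $\det\nabla\varTheta \geq 1/2$ outside $\overline{B(0,R)}$, and on the compact ball $\overline{B(0,R)}$ the continuous positive function $\det\nabla\varTheta$ attains some positive minimum $m$. Hence $\alpha := \min(m,1/2) > 0$ bounds $\det\nabla\varTheta$ from below uniformly on $\mathbf R^3$. The map $\eta \in C^1_0(\mathbf R^3)^3 \mapsto \det\nabla({\rm Id}+\eta) \in L^\infty(\mathbf R^3)$ is continuous on every bounded subset of its domain, since the determinant is a polynomial in the matrix entries (hence Lipschitz on bounded subsets of ${\rm M}(3)$) and $\|\nabla\eta\|_\infty \leq \|\eta\|_{C^1_0(\mathbf R^3)^3}$. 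Consequently, for $\eta$ close enough to $\vartheta$ in $C^1_0(\mathbf R^3)^3$, one has $\det\nabla({\rm Id}+\eta) > \alpha/2 > 0$ uniformly on $\mathbf R^3$, and the characterization of the first step yields ${\rm Id}+\eta \in \tilde D^1_0(\mathbf R^3)$, proving openness.
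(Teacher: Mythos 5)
Your argument is correct and takes a genuinely different route from the paper's. The paper defines $\delta_\vartheta:=\inf_{\mathbf e\in S^2,\,x\in\mathbf R^3}\big(({\rm Id}+\nabla\vartheta(x))\mathbf e\big)\cdot\mathbf e$, notes that $\vartheta\mapsto\delta_\vartheta$ is continuous on $C^1_0(\mathbf R^3)^3$, and uses the mean-value estimate $\big(({\rm Id}+\vartheta)(y)-({\rm Id}+\vartheta)(x)\big)\cdot\frac{y-x}{\|y-x\|_{\mathbf R^3}}\geq\delta_\vartheta\|y-x\|_{\mathbf R^3}$ to deduce injectivity of ${\rm Id}+\vartheta$ whenever $\delta_\vartheta>0$, obtaining surjectivity from the local inversion theorem together with properness. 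You instead characterize $\tilde D^1_0(\mathbf R^3)$ exactly as $\{\vartheta:\det\nabla({\rm Id}+\vartheta)>0\text{ on }\mathbf R^3\}$ via Hadamard's global inverse function theorem (properness supplied by the decay of $\vartheta$), and then derive openness from the observation that $\det\nabla({\rm Id}+\vartheta)\to 1$ at infinity furnishes a uniform positive lower bound on the Jacobian, a bound that is preserved under small $C^1_0$-perturbations. Notably, your Jacobian condition is \emph{exactly} equivalent to membership in $\tilde D^1_0(\mathbf R^3)$, while the paper's condition $\delta_\vartheta>0$ is strictly stronger and need not hold on all of $\tilde D^1_0(\mathbf R^3)$ as stated: a compactly supported twist rotating a neighborhood of the origin through an angle greater than $\pi/2$ is a volume-preserving element of $\tilde D^1_0(\mathbf R^3)$ for which $\delta_\vartheta<0$. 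So your argument proves openness of all of $\tilde D^1_0(\mathbf R^3)$, whereas the paper's, read literally, only shows the subset $\{\delta_\vartheta>0\}$ is open. The trade-off is that you invoke the global inverse function theorem or covering-space facts, which are heavier than the paper's elementary estimate; your contraction-at-infinity alternative is the appropriate way to keep the argument elementary while retaining the sharp characterization.
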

\begin{proof}
The mapping $\vartheta\in C^1_0(\mathbf R^3)^3\mapsto\delta_{\vartheta}:=\inf_{\mathbf e\in S^2\atop x\in\mathbf R^3}\langle{\rm Id}+ \nabla \vartheta(x),\mathbf e\rangle\cdot \mathbf e\in\mathbf R$ ($S^2$ stands for the unit 2 dimensional sphere)
is well defined and continuous. For any $\vartheta_0\in \tilde D^1_0(\mathbf R^3)$, we have $\delta_{\vartheta_0}>0$ and for all $x,y\in\mathbf R^3$ and $\mathbf e:=(y-x)/|y-x|$ the following estimate holds:
$(y+\vartheta(y)-x-\vartheta(x))\cdot \mathbf e=|y-x|\int_0^1\langle {\rm Id}+\nabla\vartheta(x+t\mathbf e),\mathbf e\rangle\cdot \mathbf e\,{\rm d}t>|y-x|\delta_\vartheta$.
We deduce that ${\rm Id}+\vartheta$ is one-to-one if $\vartheta$ is close enough to $\vartheta_0$. Further, still for $\vartheta$ close enough to $\vartheta_0$, ${\rm Id}+\vartheta$ is a local diffeomorphism (according to the local inversion Theorem) and hence it is onto.
\end{proof}

\begin{definition}
\label{def_D10}
We denote $D^1_0(\mathbf R^3)$ the connected component of $\tilde D^1_0(\mathbf R^3)$ that contains the identically zero function.
\end{definition}

 If $\vartheta\in C^1_0(\mathbf R^3)^3$ is such that $\|\vartheta\|_{C^1_0(\mathbf R^3)^3}<1$, the local inversion Theorem and a fixed point argument ensure that $\rm{Id}+\vartheta$ is a $C^1$ diffeomorphism so we deduce that $D^1_0(\mathbf R^3)$ contains the unit ball of $C^1_0(\mathbf R^3)^3$.

\subsubsection*{Sobolev spaces} For any open exterior domain $\mathcal F$, the weighted Sobolev space $W^1(\mathcal F)$ is defined by $W^1(\mathcal F):=\{u\in \mathcal D'(\mathcal F)\,:\,u/\sqrt{1+|x|^2}\in L^2(\mathcal F),\,\partial_{x_i}u\in L^2(\mathcal F),\,i=1,2,3\}$ (see \cite{Amrouche:1997aa} for details). 

\section{Making shape changes allowable}
\begin{proposition}
\label{prop:make_allowable}
Let $t\in[0,T]\mapsto \vartheta^j_t\in D^1_0(\mathbf R^3)$ for $j=1,\ldots,+\infty$, be a sequence of absolutely continuous functions (respectively of class $C^p$ for $p=1,\ldots,+\infty$ or analytic) which converges to  $t\in[0,T]\mapsto \vartheta^\dag_t\in D^1_0(\mathbf R^3)$ in $AC([0,T], D^1_0(\mathbf R^3))$. Assume that for some function $\varrho\in C^0(\bar B)^+$, the pair $(\varrho,\vartheta^\dag)$ satisfies \eqref{self-propelled-cond}. Then, there exists a sequence $t\in[0,T]\mapsto \bar\vartheta^j_t\in D^1_0(\mathbf R^3)$ ($j=1,\ldots,+\infty$) in $AC([0,T],D^1_0(\mathbf R^3))$ (respectively of class $C^p$ for $p=1,\ldots,+\infty$ or analytic) such that (i) for any $j=1,\ldots,+\infty$, the pair $(\varrho,\bar\vartheta^j)$ satisfies \eqref{self-propelled-cond} and (ii) $\bar\vartheta^j\to \vartheta^\dag$ in $AC([0,T], D^1_0(\mathbf R^3))$.
\end{proposition}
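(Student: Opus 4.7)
The plan is to correct the two self-propelled constraints in two successive steps: first the center-of-mass condition, then the angular momentum condition. In both steps, the correction will be a (small) rigid motion applied to the body, and we will realize that rigid motion as a diffeomorphism of $\mathbf R^3$ that equals the identity outside a large ball (by flowing a suitably cut-off vector field), exactly as in the connectedness argument at the end of the proof of Theorem~\ref{theorem:1}.

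\textbf{Step 1 (centering).} Denote $m:=\int_B\varrho\,{\rm d}x$ and set $\mathbf u^j(t):=-m^{-1}\!\int_B\varrho\,\varTheta^j_t\,{\rm d}x$. Since $(\varrho,\vartheta^\dag)$ satisfies the first identity of \eqref{self-propelled-cond} and $\vartheta^j\to\vartheta^\dag$ in $AC([0,T],D^1_0(\mathbf R^3))$, the function $\mathbf u^j$ tends to $0$ in $AC([0,T],\mathbf R^3)$, with the same type of regularity ($C^p$ or analytic) as $\vartheta^j$. Pick a cut-off $\chi\in\mathcal D(\mathbf R^3)$ with $\chi\equiv 1$ on a ball large enough to contain $\bigcup_{t\in[0,T],\,j}\varTheta^j_t(\bar B)$ (translated by any sufficiently small vector), and define $X^j_t$ as the flow of $\dot y=\dot{\mathbf u}^j(t)\chi(y)$ starting from $y=x$ at $t=0$; this is the same construction used in the proof of Theorem~\ref{theorem:1}. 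Then $X^j_t-{\rm Id}\in C^1_0(\mathbf R^3)^3$, $X^j_t$ acts as the translation by $\mathbf u^j(t)$ on the region where $\chi=1$, and $\tilde\vartheta^j_t:=X^j_t\circ\varTheta^j_t-{\rm Id}$ satisfies $\int_B\varrho\,\tilde\varTheta^j_t\,{\rm d}x=\int_B\varrho\,(\varTheta^j_t+\mathbf u^j(t))\,{\rm d}x=\mathbf 0$. Moreover $\tilde\vartheta^j\to\vartheta^\dag$ in $AC([0,T],D^1_0(\mathbf R^3))$.

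\textbf{Step 2 (de-spinning).} Let $\mathbf L^j(t):=\int_B\varrho\,\partial_t\tilde\varTheta^j_t\times\tilde\varTheta^j_t\,{\rm d}x$ and $\mathbb I^j(t):=\int_B\varrho\,[\|\tilde\varTheta^j_t\|^2{\rm Id}-\tilde\varTheta^j_t\otimes\tilde\varTheta^j_t]\,{\rm d}x$. For $j$ large the tensor $\mathbb I^j(t)$ is uniformly positive definite (close to $\mathbb I^\dag(t)$), and $\mathbf L^j\to\mathbf 0$ in $L^1([0,T],\mathbf R^3)$ by the second identity of \eqref{self-propelled-cond}. Set $\boldsymbol\Omega^j(t):=\mathbb I^j(t)^{-1}\mathbf L^j(t)$ and let $R^j(t)\in{\rm SO}(3)$ be the solution to $\dot R^j=R^j\widehat{\boldsymbol\Omega^j}$, $R^j(0)={\rm Id}$, so that $R^j\to{\rm Id}$ in $AC([0,T],{\rm M}(3))$ with the appropriate regularity. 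Using again a cut-off $\xi\in\mathcal D(\mathbf R^3)$ equal to $1$ on a sufficiently large ball, let $Y^j_t$ be the flow of the time-dependent vector field $y\mapsto\xi(y)(\boldsymbol\Omega^j(t)\times y)$. Then $Y^j_t-{\rm Id}$ is compactly supported in $C^1_0(\mathbf R^3)^3$, and $Y^j_t(y)=R^j(t)y$ wherever the flow stays in $\{\xi=1\}$. Define $\bar\vartheta^j_t:=Y^j_t\circ(X^j_t\circ\varTheta^j_t)-{\rm Id}$, so that on $\bar B$ one has $\bar\varTheta^j_t=R^j(t)\,\tilde\varTheta^j_t$.

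\textbf{Verification.} The first constraint is preserved since $\int_B\varrho\,\bar\varTheta^j_t\,{\rm d}x=R^j(t)\!\int_B\varrho\,\tilde\varTheta^j_t\,{\rm d}x=\mathbf 0$. For the second, differentiate: $\partial_t\bar\varTheta^j_t=R^j(\boldsymbol\Omega^j\times\tilde\varTheta^j_t+\partial_t\tilde\varTheta^j_t)$, and use the rotation identity $R(\mathbf a\times\mathbf b)=R\mathbf a\times R\mathbf b$ for $R\in{\rm SO}(3)$ together with the Lagrange formula $(\boldsymbol\Omega\times\tilde\varTheta)\times\tilde\varTheta=-(\|\tilde\varTheta\|^2{\rm Id}-\tilde\varTheta\otimes\tilde\varTheta)\boldsymbol\Omega$ to obtain
\begin{equation*}
\int_B\varrho\,\partial_t\bar\varTheta^j_t\times\bar\varTheta^j_t\,{\rm d}x=R^j(t)\big[-\mathbb I^j(t)\boldsymbol\Omega^j(t)+\mathbf L^j(t)\big]=\mathbf 0
\end{equation*}
by the definition of $\boldsymbol\Omega^j$. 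Finally, $\bar\vartheta^j\to\vartheta^\dag$ in $AC([0,T],D^1_0(\mathbf R^3))$ because $\mathbf u^j\to 0$ and $\boldsymbol\Omega^j\to 0$, and the chosen flows depend continuously on these small data.

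\textbf{Regularity and main obstacle.} All operations (algebraic combinations, integration of $\varrho$ against $\vartheta$-dependent expressions, solution of ODEs with coefficients depending analytically on the data) preserve the $C^p$ or analytic character in $t$, so $\bar\vartheta^j$ inherits the regularity of $\vartheta^j$. The main technical point is showing that $\bar\vartheta^j$ remains in $D^1_0(\mathbf R^3)$ for each $t$: since $\mathbf u^j$ and $\boldsymbol\Omega^j$ are small, both $X^j_t$ and $Y^j_t$ are $C^1$-close to the identity, hence $\bar\varTheta^j_t$ stays $C^1$-close to $\varTheta^\dag_t\in D^1_0(\mathbf R^3)$, which is an open set; combined with isotopy to the identity through the family above, this places $\bar\vartheta^j_t$ in the same connected component. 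The cut-off construction itself is the one already validated in the connectedness argument of Theorem~\ref{theorem:1}, which takes care of the only genuine subtlety, namely extending a rigid displacement of the body to a global diffeomorphism that is the identity at infinity.
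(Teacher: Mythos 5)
Your decomposition — first recenter to kill the linear-momentum constraint, then rotate to kill the angular-momentum constraint, realizing both rigid corrections as compactly supported diffeomorphisms — is exactly the paper's strategy, and your algebraic verification of the second constraint (via the Lagrange formula, assuming $\bar\varTheta^j_t=R^j(t)\tilde\varTheta^j_t$ on $\bar B$) is correct. The problem is that the flows you build do not actually produce the rigid motion $y\mapsto R^j(t)\big(y-\mathbf r^j(t)\big)$ on $\bar B$, so the hypothesis of that verification is not met.

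Two concrete gaps. First, the rotation: the flow of $\dot y=\boldsymbol\Omega^j(t)\times y$ inside $\{\xi=1\}$ is $y\mapsto\Phi^j(t)y$ where $\Phi^j$ solves the \emph{left} equation $\dot\Phi^j=\widehat{\boldsymbol\Omega^j}\,\Phi^j$, $\Phi^j(0)={\rm Id}$, whereas your $R^j$ solves the \emph{right} equation $\dot R^j=R^j\widehat{\boldsymbol\Omega^j}$. These coincide only when $\boldsymbol\Omega^j$ is time-independent (or commutes with $\Phi^j$). With $\Phi^j\neq R^j$ the computation of $\int_B\varrho\,\partial_t\bar\varTheta^j_t\times\bar\varTheta^j_t\,{\rm d}x$ produces $\Phi^j\big[-\mathbb I^j\,(\Phi^j)^t\boldsymbol\Omega^j+\mathbf L^j\big]$, which is not zero. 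The fix is standard: feed the flow the \emph{spatial} angular velocity $R^j(t)\boldsymbol\Omega^j(t)$ (equivalently, $\dot R^j(R^j)^t$) rather than the body one, since $\widehat{R^j\boldsymbol\Omega^j}=\dot R^j(R^j)^t$ and then the flow matrix does equal $R^j$. Second, the translation: starting $X^j$ from the identity, the flow of $\dot y=\dot{\mathbf u}^j(t)\chi(y)$ translates by $\mathbf u^j(t)-\mathbf u^j(0)$ on $\{\chi=1\}$, not by $\mathbf u^j(t)$, so $\int_B\varrho\,\tilde\varTheta^j_t\,{\rm d}x=-m\,\mathbf u^j(0)$, which is generically nonzero (it only vanishes if $\vartheta^j_0$ is already centered). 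You must either absorb $\mathbf u^j(0)$ in the initial datum of the flow or recompute $\mathbf u^j$ so it vanishes at $t=0$. The paper sidesteps both pitfalls by applying the rigid correction algebraically (set $\tilde\varTheta^j_t:=\varTheta^j_t-\mathbf r^j(t)$, then multiply by $R^j(t)$, which gives the constraints exactly on $\bar B$) and then performing a single cut-off at the end to bring the result back into $D^1_0(\mathbf R^3)$; doing the cut-off after the algebra means one never needs to encode the rigid motion as a flow, which is precisely where your body/spatial and initial-time errors crept in.
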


\begin{proof}Denote $m:=\int_{B}\varrho\,{\rm d}x$ and, for every $j=1,\ldots,+\infty$, $\varTheta^j_t={\rm Id}+\vartheta^j_t$ and $\mathbf r^j(t):=(1/m)\int_{B}\varrho\,\varTheta^j_t\,{\rm d}x$. Then, for any continuous function $t\in[0,T]\mapsto R(t)\in{\rm SO}(3)$, we have $\int_{B}\varrho\,R(t)\tilde\varTheta^j_t\,{\rm d}x=\mathbf 0$ where $\tilde\varTheta^j_t:=\varTheta^j-\mathbf r^j(t)$. Let us now determine an absolutely continuous function $t\in[0,T]\mapsto R^j(t)\in{\rm SO}(3)$ such that  we have also $\int_{B}\varrho[\partial_t(R^j(t)\tilde\varTheta^j_t)\times R^j(t)\tilde\varTheta^j_t]{\rm d}x=\mathbf 0$ for all $t\in]0,T[$.
Introducing $\mathbb I(\tilde\varTheta^j_t):=\int_{B}\varrho[|\tilde\varTheta^j_t|^2{\rm Id}-\tilde\varTheta^j_t\otimes\tilde\varTheta^j_t]{\rm d}x$ (an inertia tensor with $\varrho>0$, so always invertible) and $\mathbf G(\tilde\varTheta^j_t,\partial_t\tilde\varTheta^j_t):=\mathbb I(\tilde\varTheta^j_t)^{-1}\int_{B}\varrho\,\partial_t\tilde\varTheta^j_t\times\tilde\varTheta^j_t{\rm d}x$, the identity above can be turned into $\dot R^j(t)=R^j(t)\hat{\mathbf G}(\tilde\varTheta^j_t,\partial_t\tilde\varTheta^j_t)$ ($0<t<T$).
Since ${\rm SO}(3)$ is compact, this ODE supplemented with the Cauchy data $R(0)={\rm Id}$ admits a unique solution defined for all $t\in[0,T]$. The solution has the same regularity as $t\in[0,T]\mapsto \vartheta^j_t\in D^1_0(\mathbf R^3)$. Besides, basic estimates allow to prove that $(R^j,\mathbf r^j)\to ({\rm Id},\mathbf 0)$ in $AC([0,T],{\rm SO}(3)\times\mathbf R^3)$ and next that $\tilde\vartheta^j\to \vartheta^\dag$ in $AC([0,T],C^1_0(\mathbf R^3)^3)$. Notice that, at this point, we probably have $\tilde\vartheta^j\notin D^1_0(\mathbf R^3)$. Let now $\Omega$ be a large ball containing $\cup_{t\in[0,T]\atop j\in\mathbf N}\tilde\varTheta^j_t(\bar B)$ and $\Omega'$ be an even larger ball containing $\Omega$. Consider $\chi$ a cut-off function such that $0\leq \chi\leq 1$, $\chi = 1$ in $\Omega$ and $\chi=0$ in $\mathbf R^3\setminus \bar\Omega'$. Define $\bar\varTheta^j$ as the flow associated with the Cauchy problem $\dot X(t,x)=\chi(x)\partial_t\tilde\vartheta^j_t+(1-\chi(x))\partial_t\vartheta^\dag$, $X(0,x)=\varTheta^\dag_{t=0}(x)$, ($x\in\mathbf R^3$) and $\bar\vartheta^j:=\bar\varTheta^j-{\rm Id}$. Since $\bar\vartheta^j_{t=0}=\vartheta^\dag_{t=0}$, we deduce that $\bar\vartheta^j_t\in D^1_0(\mathbf R^3)$ for all $t\in[0,T]$ and the sequence $t\in[0,T]\mapsto \bar\vartheta^j_t\in D^1_0(\mathbf R^3)$ complies with the requirements of the Proposition. 
\end{proof}
\section{Making vector fields allowable}
Let a triplet $(\varrho,\vartheta,\mathcal V)\in C^0(\bar B)^+\times D^1_0(\mathbf R^3)\times(C^m_0(\mathbf R^3)^3)^n$ be given such that $\int_B\varrho\,\varTheta\,{\rm d}x=\mathbf 0$ where $\varTheta={\rm Id}+\vartheta$. Recall that $\Sigma=\partial B$. 
\begin{proposition}
\label{rectif_vector_fields}
It is always possible to define new vector fields $\mathbf V_i^\ast$ in such a way that  (i) $\mathbf V_i^\ast|_\Sigma=\mathbf V_i|_{\Sigma}$, (ii) $(\varrho,\vartheta,\mathcal V^\ast)\in\mathcal C(n)$ with $\mathcal V^\ast:=(\mathbf V_1^\ast,\ldots,\mathbf V_n^\ast)$ and (iii) $\int_B\varrho\,\varTheta\cdot\mathbf V_i^\ast\,{\rm d}x=0$.
\end{proposition}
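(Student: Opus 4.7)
I would look for $\mathbf V_i^\ast$ of the form $\mathbf V_i^\ast := \mathbf V_i + \mathbf U_i$, with corrections $\mathbf U_i \in \mathcal D(B)^3$ (smooth, compactly supported in the open unit ball $B$), extended by zero to all of $\mathbf R^3$. Property (i) then holds automatically, since $\mathbf U_i$ vanishes in a neighborhood of $\Sigma$. I will construct the $\mathbf U_i$'s inductively on $i=1,\ldots,n$. At step $k$, assuming that $\mathbf V_1^\ast,\ldots,\mathbf V_{k-1}^\ast$ have already been chosen and satisfy conditions (ii) and (iii) of Definition~\ref{def:sc} among themselves and with $\varTheta$, the remaining constraints on $\mathbf V_k^\ast$ consist of $3k+4$ scalar linear equations for $\mathbf U_k$:
\begin{align*}
\int_B \varrho\,\mathbf U_k\,{\rm d}x &= -\int_B \varrho\,\mathbf V_k\,{\rm d}x,\\
\int_B \varrho\,\varTheta\times\mathbf U_k\,{\rm d}x &= -\int_B \varrho\,\varTheta\times\mathbf V_k\,{\rm d}x,\\
\int_B \varrho\,\varTheta\cdot\mathbf U_k\,{\rm d}x &= -\int_B \varrho\,\varTheta\cdot\mathbf V_k\,{\rm d}x,\\
\int_B \varrho\,\mathbf V_j^\ast\times\mathbf U_k\,{\rm d}x &= -\int_B \varrho\,\mathbf V_j^\ast\times\mathbf V_k\,{\rm d}x,\quad 1\le j<k.
\end{align*}
Each left-hand side can be rewritten in the form $\int_B\varrho\,\mathbf g\cdot \mathbf U_k\,{\rm d}x$ for a direction field $\mathbf g$ in the finite collection $\{\mathbf e_\ell,\,\mathbf e_\ell\times\varTheta,\,\varTheta,\,\mathbf e_\ell\times\mathbf V_j^\ast:\,\ell=1,2,3,\,j<k\}$.

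The central step is surjectivity of the evaluation map $\mathbf U\in\mathcal D(B)^3\mapsto(\text{left-hand sides})\in\mathbf R^{3k+4}$. By density of $\mathcal D(B)^3$ in $L^2(B)^3$ and the fact that $\varrho>0$ on $\bar B$, this surjectivity reduces to the linear independence (as elements of $L^2(B)^3$, hence pointwise on $B$) of the above $3k+4$ direction fields. I would verify this by the same coefficient-chasing used in the proof of Theorem~\ref{theorem:1}: assuming a linear combination of the $\mathbf g$'s vanishes on $B$, hypothesis (i) of Definition~\ref{def:sc} applied inductively to the subfamily $\{\mathbf V_j^\ast|_{\bar B}\cdot\mathbf e_\ell\}_{j<k,\,\ell=1,2,3}$ already constructed forces the cross-product coefficients to vanish, and then the independence of $\{\mathbf e_1,\mathbf e_2,\mathbf e_3,\varTheta,\mathbf e_1\times\varTheta,\mathbf e_2\times\varTheta,\mathbf e_3\times\varTheta\}$ (noting that $\varTheta={\rm Id}+\vartheta$ is not a combination of constants since $\vartheta\in C^1_0(\mathbf R^3)^3$ tends to zero at infinity) disposes of the remaining coefficients. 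Once surjectivity is secured, the set of admissible $\mathbf U_k$'s is a nonempty affine subspace of $\mathcal D(B)^3$ of infinite dimension.

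Finally, I have to pick $\mathbf U_k$ inside this affine set so as to preserve the free-family hypothesis (i) of Definition~\ref{def:sc} for the enlarged family $\{\mathbf V_j^\ast|_{\bar B}\cdot\mathbf e_\ell,\,1\le j\le k,\,\ell=1,2,3\}$. Since being free is an open condition in $C^1_0(\bar B)^{3k}$ and the affine solution set projects onto an infinite-dimensional subset of $C^1_0(\bar B)^3$, a generic (indeed an arbitrarily small) admissible perturbation will comply with it.

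\textbf{Main obstacle.} The delicate point is the linear-independence step: after several inductions, the growing list of vector fields $\mathbf e_\ell\times\mathbf V_j^\ast$ could \emph{a priori} collapse a direction and prevent surjectivity. The argument must exploit the freedom still available in the choice of $\mathbf U_k$ at each stage in order to keep the direction fields in general position, mirroring the inductive coefficient elimination of Theorem~\ref{theorem:1}. Once this independence is maintained, the construction terminates in $n$ steps.
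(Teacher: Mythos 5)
Your proposal follows essentially the same inductive construction as the paper: at each step you correct $\mathbf V_k$ by a vector field compactly supported in $B$, impose the finitely many moment constraints, prove surjectivity of the constraint map via density of test functions in $L^2(B)^3$ and positivity of $\varrho$, and use the infinite-dimensional freedom in the kernel to keep the enlarged family free. The concern you raise in your ``main obstacle'' paragraph resolves without extra work: once the extended free-family condition (including the components of $\varTheta$ and of the already-built $\mathbf V_j^\ast$) is maintained and the moments $\int_B\varrho\,\Theta_m\,{\rm d}x=\int_B\varrho\,(\mathbf V_j^\ast)_m\,{\rm d}x=0$ hold, any relation $\mathbf a+(c\,{\rm Id}+\hat{\mathbf b})\varTheta+\sum_j\hat{\mathbf d}_j\mathbf V_j^\ast=0$ on $B$ forces $\mathbf a=\mathbf 0$, then $c\,{\rm Id}+\hat{\mathbf b}=0$ (hence $c=0$, $\mathbf b=\mathbf 0$) and each $\hat{\mathbf d}_j=0$, so the direction fields cannot collapse.
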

\begin{proof}
Arguing like in the proof of Theorem~\ref{theorem:1}, we can easily show that the mapping $\mathbf W\in C^1_0(B)^3\mapsto(\int_{B}\varrho\, \mathbf W\,{\rm d}x,\int_{B}\varrho\, \varTheta\times\mathbf W\,{\rm d}x,\int_B\varrho\,\varTheta\cdot\mathbf W\,{\rm d}x)\in\mathbf R^3\times\mathbf R^3\times\mathbf R$ is onto with infinite dimensional kernel. Hence, it is always possible to find a vector field $\mathbf W_1\in C^1_0(B)^3$ satisfying $\int_{B}\varrho\, \mathbf W_1\,{\rm d}x=-\int_B\varrho\, \mathbf V_1\,{\rm d}x$, $\int_{B}\varrho\, \varTheta\times\mathbf W_1\,{\rm d}x=-\int_B\varrho\, \varTheta\times\mathbf V_1\,{\rm d}x$, $\int_B\varrho\, \varTheta\cdot\mathbf W^1\,{\rm d}x=-\int_B\varrho\, \varTheta\cdot\mathbf V^1\,{\rm d}x$
and such that $\{\varTheta|_B\cdot\mathbf e_k,\,\mathbf (\mathbf V_1+\mathbf W_1)|_{B}\cdot\mathbf e_k,\,k=1,2,3\}$ is a free family in $C^1_0(\mathbf R^3)^3$.
We denote $\mathbf V_1^\ast:=\mathbf V_1+\mathbf W_1$.  We can continue with the same idea: The mapping $\mathbf W\in C^1_0(\mathbf R^3)^3\mapsto(\int_{B}\varrho\, \mathbf W\,{\rm d}x,\int_{B}\varrho\, \varTheta\times\mathbf W\,{\rm d}x,\int_{B}\varrho\, \mathbf V_1^\ast\times\mathbf W\,{\rm d}x)\in\mathbf R^3\times\mathbf R^3\times\mathbf R^3$ is onto, also with infinite dimensional kernel. Again, it is possible to find $\mathbf W_2\in C^1_0(B)^3$ satisfying
$\int_{B}\varrho\, \mathbf W_2\,{\rm d}x=-\int_B\varrho\, \mathbf V_2\,{\rm d}x$, $\int_{B}\varrho\, \varTheta\times\mathbf W_2\,{\rm d}x=-\int_B\varrho\, \varTheta\times\mathbf V_2\,{\rm d}x$, 
$\int_{B}\varrho\, \varTheta\cdot\mathbf W_2\,{\rm d}x=-\int_B\varrho\, \varTheta\cdot\mathbf V_2\,{\rm d}x$ and $\int_{B}\varrho\, \mathbf V_1^\ast\times\mathbf W_2\,{\rm d}x=-\int_B\varrho\,\mathbf V_1^\ast\times\mathbf V_2\,{\rm d}x$
and such that $\{\varTheta|_B\cdot\mathbf e_k,\,\mathbf V_1^\ast|_B\cdot\mathbf e_k,\,(\mathbf V_2+\mathbf W_2)|_B\cdot\mathbf e_k,\,k=1,2,3\}$ is free in $C^1_0(\mathbf R^3)^3$.
We can set $\mathbf V_2^\ast:=\mathbf V_2+\mathbf W_2$ and iterate this process to define $\mathbf V_3^\ast,\ldots,\mathbf V_n^\ast$.
\end{proof}
\section{Added mass matrix for particular shaped swimmers}
Let $\mathcal B$ be an open, bounded, connected, $C^1$ subset of $\mathbf R^3$. Denote $\varSigma$ its boundary, $\mathbf n$ the unit vector to $\varSigma$ directed toward the interior of $\mathcal B$, $\mathcal F:=\mathbf R^3\setminus\bar{\mathcal B}$ and consider the $6\times 6$ symmetric matrix $\mathbb M^f$ of which the entries are defined by $\int_{\mathcal F}\nabla\psi_i\cdot\nabla\psi_j\,{\rm d}x$ ($1\leq i,j\leq 6$)
where the functions $\psi_i$ ($i=1,\ldots,6$) are harmonic in $\mathcal F$ and satisfy the Neumann boundary conditions
$\partial_n\psi_i=(\mathbf e_i\times x)\cdot\mathbf n$ on $\varSigma$ if $i=1,\ldots,6$ and $\partial_n\psi_i=\mathbf e_{i-3}\cdot\mathbf n$ if $i=4,5,6$. 
\begin{proposition}
\label{matrix_positive_definite}
The matrix $\mathbb M^f$ is always positive and it is  positive definite if and only if $\mathcal B$ has no axis of symmetry.
\end{proposition}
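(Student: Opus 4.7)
The plan is to view $\mathbb M^f$ as the Gram matrix associated with the $L^2$ inner product of the gradients $\nabla\psi_i$ on $\mathcal F$, and then reduce the positive-definiteness question to a purely geometric tangency condition on $\varSigma$. For any vector $\boldsymbol\lambda=(\boldsymbol\omega,\mathbf v)^t\in\mathbf R^3\times\mathbf R^3$, I set $\psi_{\boldsymbol\lambda}:=\sum_{i=1}^6\lambda_i\psi_i\in W^1(\mathcal F)$; by linearity $\psi_{\boldsymbol\lambda}$ is harmonic in $\mathcal F$ and satisfies $\partial_n\psi_{\boldsymbol\lambda}=(\boldsymbol\omega\times x+\mathbf v)\cdot\mathbf n$ on $\varSigma$. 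An integration by parts (justified in $W^1(\mathcal F)$ since the $\psi_i$ decay at infinity in the weighted sense) gives
\[
\boldsymbol\lambda^t\mathbb M^f\boldsymbol\lambda=\int_{\mathcal F}|\nabla\psi_{\boldsymbol\lambda}|^2\,\mathrm dx\ge 0,
\]
which already yields positivity in general.

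Next, assuming $\boldsymbol\lambda^t\mathbb M^f\boldsymbol\lambda=0$, I deduce $\nabla\psi_{\boldsymbol\lambda}\equiv 0$ on the connected domain $\mathcal F$, so $\psi_{\boldsymbol\lambda}$ is a constant; membership in $W^1(\mathcal F)$ (the decay condition $\psi_{\boldsymbol\lambda}/\sqrt{1+|x|^2}\in L^2(\mathcal F)$) forces that constant to vanish. Reading off the boundary data then shows $\mathbb M^f$ fails to be positive definite if and only if there exists a nonzero $(\boldsymbol\omega,\mathbf v)\in\mathbf R^6$ with
\[
(\boldsymbol\omega\times x+\mathbf v)\cdot\mathbf n=0\quad\text{on }\varSigma,
\]
i.e.\ with the rigid velocity field $X(x):=\boldsymbol\omega\times x+\mathbf v$ tangent to $\varSigma$ everywhere.

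The final step is geometric: characterize bounded $C^1$ bodies admitting such a nonzero $X$. Since $X$ is divergence-free and tangent to $\varSigma$, its flow $\phi_t$ (a one-parameter group of rigid motions of $\mathbf R^3$) preserves $\mathcal B$. If $\boldsymbol\omega=0$ and $\mathbf v\neq 0$ the flow is a translation, which cannot preserve the bounded set $\mathcal B$; hence $\boldsymbol\omega\neq 0$. Decomposing $\mathbf v=\mathbf v_\|+\mathbf v_\perp$ along and perpendicular to $\boldsymbol\omega$, and choosing $c:=\boldsymbol\omega\times\mathbf v_\perp/|\boldsymbol\omega|^2$ so that $\boldsymbol\omega\times c=-\mathbf v_\perp$, we rewrite $X(x)=\boldsymbol\omega\times(x-c)+\mathbf v_\|$, exhibiting $\phi_t$ as a screw motion (rotation about the axis $c+\mathbf R\boldsymbol\omega$ together with translation of speed $|\mathbf v_\||$ along it). Again boundedness of $\mathcal B$ forces $\mathbf v_\|=0$, so $\phi_t$ is an infinitesimal rotation about the axis $c+\mathbf R\boldsymbol\omega$, which by invariance of $\mathcal B$ is by definition an axis of symmetry of $\mathcal B$. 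Conversely, an axis of symmetry yields an infinitesimal rotation vector field tangent to $\varSigma$ and hence a nontrivial null vector of $\mathbb M^f$.

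The steps involving the quadratic form and the Riesz-type argument $\nabla\psi\equiv 0\Rightarrow \psi\equiv 0$ in $W^1(\mathcal F)$ are essentially routine; the real crux is the third paragraph, namely making precise the equivalence \emph{``$X$ is tangent to $\varSigma$ $\Longleftrightarrow$ the rigid flow of $X$ preserves $\mathcal B$''} for a merely $C^1$ boundary, and then exploiting the classification of one-parameter subgroups of the Euclidean group together with the boundedness of $\mathcal B$ to rule out any non-rotational motion. Once this screw-motion dichotomy is in place, the equivalence with the existence of an axis of symmetry is immediate.
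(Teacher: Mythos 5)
Your proof is correct and follows essentially the same route as the paper's: treat $\mathbb M^f$ as a Gram matrix, observe that a null vector produces a rigid velocity field tangent to $\varSigma$, decompose it as a screw motion centred at $c=\boldsymbol\omega\times\mathbf v_\perp/|\boldsymbol\omega|^2$ (the paper's $\mathbf w=(\mathbf u\times\mathbf v)/|\mathbf u|^2$ is the same point), and use boundedness of $\varSigma$ under the explicit flow to kill first the pure translation and then the pitch of the screw, leaving an axis of rotation. You are a bit more explicit than the paper on two small points — the passage $\nabla\psi\equiv 0\Rightarrow\psi\equiv 0$ via the $W^1$-decay, and the converse direction — but these are routine and do not change the argument.
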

\begin{proof}
Denote $\boldsymbol\alpha:=(\alpha_1,\ldots,\alpha_6)^t$ any element in $\mathbf R^6$ and $\psi:=\sum_{i=1}^6\alpha_i\psi_i$. Then, we have $\boldsymbol\alpha^t\mathbb M^f\boldsymbol\alpha=\int_{\mathcal F}\|\nabla\psi\|_{\mathbf R^3}^2{\rm d}x\geq 0$
which proves that $\mathbb M^f$ is indeed positive. Let now $\boldsymbol\alpha$ be in $\mathbf R^6$ such that $\boldsymbol\alpha^t\mathbb M^f\boldsymbol\alpha=0$. It means that $\psi=0$ and hence $\partial_n\psi=\sum_{i=1}^3\alpha_i(\mathbf e_i\times x)\cdot\mathbf n+\sum_{i=4}^6\alpha_i\mathbf e_i\cdot\mathbf n=0$. Denoting $\mathbf u:=\sum_{i=1}^3\alpha_i\mathbf e_i$ and $\mathbf v:=\sum_{i=4}^6\alpha_i\mathbf e_i\cdot\mathbf n=0$, this condition reads $(\mathbf u\times x+\mathbf v)\cdot\mathbf n=0$ on $\varSigma$. The case $\mathbf u=\mathbf 0$ is obviously not possible, otherwise we would have $\mathbf v\cdot\mathbf n(x)=0$ for all $x\in\varSigma$ so let us assume from now on that $\mathbf u\neq \mathbf 0$. For all $x\in\varSigma$, $(\mathbf u\times x+\mathbf v)\in T_x\varSigma$ (the tangent space to $\varSigma$ at $x$). Let us set $\mathbf w:=(\mathbf u\times \mathbf v)/|\mathbf u|^2$. Then $(\mathbf u\times x+\mathbf v)=\mathbf u\times(x-\mathbf w)+\lambda\mathbf u$ with $\lambda:=(\mathbf v\cdot \mathbf u)/|\mathbf u|^2$.  In this form, we can explicitly compute the expression of the flow connecting to the ODE $\dot X(t,x)=\mathbf u\times (X(t,x)-\mathbf w)+\lambda\mathbf u$, $X(0,x)=x$. We obtain $X(t,x)=\exp(t\hat{\mathbf u})(x-\mathbf w)+\mathbf w+t\lambda\mathbf u$ for all $t\in\mathbf R$. Since $X(t,x)$ has to remain on $\varSigma$ for all $t\in\mathbf R$ and $\varSigma$ is bounded, we deduce that $\lambda\mathbf u=\mathbf 0$. For all $x\in\varSigma$ and all $t\in\mathbf R$, the point $X(t,x)$ lies on $\varSigma$. It means that $\varSigma$ is globally invariant under a rotation whose axis has $\mathbf u$ as direction vector and goes through $\mathbf w$. 
\end{proof}

Let $\mathcal M$ be a smooth embedded submanifold of a Banach space $E$ and $\mathcal X:=(\mathbf X_i)_{1\leq i\leq n}$ be set of smooth vector fields on $\mathcal M$. 
\begin{lemma}
\label{LEM:converg}
Let $\alpha_i$ be in $L^\infty([0,T])$ ($i=1,\ldots,n$) and $x$ be a Carath\'eodory's solution defined on the time interval $[0,T]$ to the ODE $\dot x(t)=\sum_{i=1}^n\alpha_i(t)\mathbf X_i(x)$ ($0<t\leq T$),
with Cauchy data $x(0)=x_0\in\mathcal M$. Let the functions $\alpha^k_i$ , $(i=1,\ldots,n,\,k\in\mathbf N$) be in $L^1([0,T])$, such that $\alpha_i^k\to \alpha_i$ in $L^1([0,T])$ as $k\to+\infty$. Then, for any sequence of Carath\'eodory's solutions $(x_k)_{k\in\mathbf N}$ satisfying $\dot x_k(t)=\sum_{i=1}^n\alpha^k_i(t)\mathbf X_i(x)$
with Cauchy data $x_k(0)=x_0$, the functions $x_k$ can be continued on the whole interval $[0,T]$ for $k$ large enough, $x_k\to x$ uniformly on $[0,T]$ and $\dot x_k\to \dot x$ in $L^1([0,T],\mathcal M)$.
\end{lemma}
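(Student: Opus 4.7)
The plan is to carry out a standard Grönwall/continuity argument for ODEs in a Banach space, with a stopping time to handle the fact that we only control the $\alpha_i^k$ in $L^1$ and that the solutions a priori live on the submanifold $\mathcal{M}$.

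First, I would reduce to an estimate in the ambient space $E$. Since $x$ is continuous, $\Gamma := x([0,T])$ is compact in $\mathcal{M}\subset E$. Using that $\mathcal{M}$ is smoothly embedded, I would fix a bounded open tubular neighborhood $U$ of $\Gamma$ in $E$ together with smooth extensions $\tilde{\mathbf{X}}_i$ of $\mathbf{X}_i$ to $U$ (via a finite cover by straightening charts and a partition of unity), and then choose a compact set $K$ with $\Gamma \subset \mathrm{int}(K) \subset K \subset U$. On $K$ the $\tilde{\mathbf{X}}_i$ are bounded by some $M$ and Lipschitz with constant $L$. For each $k$, set $\tau_k := \sup\{t\in[0,T] : x_k([0,t])\subset K\} > 0$.

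Second, on the interval $[0,\tau_k]$ I would split the difference
$$x_k(t)-x(t) = \int_0^t \sum_i \alpha_i^k(s)\bigl(\tilde{\mathbf{X}}_i(x_k(s))-\tilde{\mathbf{X}}_i(x(s))\bigr)\mathrm{d}s + \int_0^t\sum_i\bigl(\alpha_i^k(s)-\alpha_i(s)\bigr)\tilde{\mathbf{X}}_i(x(s))\mathrm{d}s,$$
bound the second integral in $E$ by $M\sum_i \|\alpha_i^k-\alpha_i\|_{L^1([0,T])}$, and apply Grönwall's inequality to conclude
$$\sup_{t\in[0,\tau_k]}\|x_k(t)-x(t)\|_E \leq M\Bigl(\sum_i\|\alpha_i^k-\alpha_i\|_{L^1}\Bigr)\exp\Bigl(L\sum_i\|\alpha_i^k\|_{L^1}\Bigr).$$
Since the $\|\alpha_i^k\|_{L^1}$ are bounded (the $\alpha_i^k$ converge in $L^1$) and the $\|\alpha_i^k-\alpha_i\|_{L^1}$ tend to $0$, the right-hand side tends to $0$. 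Hence for $k$ large the approximate trajectory remains inside $\mathrm{int}(K)$, so $\tau_k = T$ and $x_k \to x$ uniformly on $[0,T]$.

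Third, the $L^1$-convergence of the derivatives follows by writing
$$\dot{x}_k-\dot{x} = \sum_i(\alpha_i^k-\alpha_i)\tilde{\mathbf{X}}_i(x_k)+\sum_i\alpha_i\bigl(\tilde{\mathbf{X}}_i(x_k)-\tilde{\mathbf{X}}_i(x)\bigr).$$
The $L^1([0,T],E)$-norm of the first sum is at most $M\sum_i\|\alpha_i^k-\alpha_i\|_{L^1}\to 0$, while the second sum is bounded in the same norm by $LT\bigl(\sum_i\|\alpha_i\|_{L^\infty}\bigr)\sup_{t\in[0,T]}\|x_k(t)-x(t)\|_E\to 0$; this is the only point where the hypothesis $\alpha_i\in L^\infty$ is used in an essential way.

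The one step that is not entirely cosmetic is the preliminary one: constructing smooth extensions $\tilde{\mathbf{X}}_i$ on a common neighborhood of $\Gamma$ in the ambient Banach space $E$. Once this is done (e.g.\ by finitely many charts of $\mathcal{M}$ straightening out $\mathcal{M}$ locally as a closed linear subspace of $E$, together with a smooth partition of unity on $\Gamma$), the rest of the proof is the standard Grönwall argument above.
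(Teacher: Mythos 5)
Your core argument is the same as the paper's: fix a compact set around the reference trajectory, exploit a local Lipschitz bound and a bound $M$ on the vector fields there, define a stopping time while $x_k$ stays in the compact set, run Gr\"onwall, then note that for $k$ large the trajectory never leaves the compact set so the stopping time is $T$. The derivative estimate at the end is also the paper's.

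There are two differences worth recording. The first is cosmetic: you split $\dot x_k-\dot x$ as $\sum_i\alpha_i^k(\mathbf X_i(x_k)-\mathbf X_i(x))+\sum_i(\alpha_i^k-\alpha_i)\mathbf X_i(x)$ and run Gr\"onwall with the $L^1$-bounded weight $\sum_i|\alpha_i^k|$, whereas the paper uses the symmetric splitting $\sum_i(\alpha_i^k-\alpha_i)\mathbf X_i(x_k)+\sum_i\alpha_i(\mathbf X_i(x_k)-\mathbf X_i(x))$ and Gr\"onwall with the constant $mk_\delta$ where $m=\sup_i\|\alpha_i\|_{L^\infty}$. Both are valid; yours has the slight merit of showing that the $L^\infty$ hypothesis on $\alpha_i$ is only needed for the final $L^1$-convergence of derivatives, as you observe.

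The second difference is a genuine detour that you should drop. You open by constructing a tubular neighborhood $U$ of $\Gamma=x([0,T])$ in the ambient space $E$ and smooth extensions $\tilde{\mathbf X}_i$ of the vector fields to $U$ via straightening charts and a partition of unity. This step is unnecessary: every Carath\'eodory solution of $\dot x_k=\sum_i\alpha_i^k(t)\mathbf X_i(x_k)$ with initial data on $\mathcal M$ stays on $\mathcal M$ for as long as it exists, so the stopping-time argument can be run entirely inside $\mathcal M$ with the compact set $K_\delta:=\{y\in\mathcal M:\|y-x(t)\|_E\le\delta\text{ for some }t\}$, using the Lipschitz constant of the $\mathbf X_i$ on $K_\delta$ measured in the $E$-norm, exactly as the paper does. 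Worse, in the generality in which the lemma is stated ($\mathcal M$ an embedded submanifold of an arbitrary Banach space $E$), your extension step is not justified: smooth partitions of unity and tubular neighborhoods require $E$ to admit smooth bump functions, which is false for general Banach spaces. In the paper's actual application $\mathcal M$ is finite-dimensional, so nothing breaks, but the extension construction is both a gap at the level of generality claimed and an avoidable complication. Remove it and work directly on $\mathcal M$; everything else you wrote survives unchanged.
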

\begin{proof}
For any $\delta>0$ small enough, denote by $K_{\delta}$ the compact $\{x\in\mathcal M\,:\,\|x-x(t)\|_E\leq \delta,\,t\in[0,T]\}$ and denote $k_\delta>0$ the Lipschitz constant such that $\sum_{i=1}^n\|\mathbf X_i(x)-\mathbf X_i(y)\|_E<k_{\delta}\|x-y\|_E$ for all $x,\,y\in K_\delta$ and all $i=1,\ldots,n$. Let $M:=\max_{x\in K_\delta\atop i=1,\ldots,n}\|\mathbf X_i(x)\|_E$ and $m:=\sup_{t\in[0,T]\atop i=1,\ldots,n}|\alpha_i(t)|$. Any function $x_k$ is defined at least on a small interval $[0,t_k[$ and we can choose $t_k$ small enough such that $x_k(t)\in K_\delta$ for all $t\in[0,t_k[$. We get the estimate, for all $t\in[0,t_k[$:
\begin{equation}
\|x(t)-x_k(t)\|_E\leq M\sum_{i=1}^n\int_0^t|\alpha_i(s)-\alpha_i^k(s)|{\rm d}s+mk_\delta\int_0^t\|x_k(s)-x(s)\|_E{\rm d}s.\label{grn}
\end{equation}
For any $\varepsilon>0$, we can choose $k$ large enough such that $\sum_{i=1}^n\int_0^t|\alpha_i(s)-\alpha_i^k(s)|{\rm d}s<\varepsilon e^{-m k_{\delta} T}/M$. Applying Gr\"onwall inequality to \eqref{grn}, we obtain that $\|x(t)-x_k(t)\|_E<\varepsilon$ for all $t\in[0,t_k[$. We deduce first that if $\varepsilon<\delta$, the solution $x_k$ can be continued on the whole interval $[0,T]$ and then that $x_k\to x$ uniformly as $k\to+\infty$. Writing now that:
\begin{align*}
\int_0^t\|\dot x_k(t)-\dot x(t)\|_E{\rm d}t&\leq \sum_{i=1}^n\int_0^t|\alpha^k_i(t)-\alpha_i(t)|\|\mathbf X_i(x_k(s))\|_E+|\alpha_i(t)|\|\mathbf X_i(x_k(t))-\mathbf X_i(x(t))\|_E{\rm d}t\\
&\leq M \sum_{i=1}^n\int_0^t|\alpha^k_i(t)-\alpha_i(t)|{\rm d}t+m k_\delta\int_0^t\|x_k(t)-x(t)\|_E{\rm d}t,
\end{align*}
we get the convergence of the sequence $(\dot x_k)_{k\in\mathbf N}$ to $\dot x$ in $L^1([0,T],\mathcal M)$.
\end{proof}
\bibliographystyle{abbrv}
\bibliography{control_cas_general_biblio}
\end{document}